\documentclass{elsarticle}
\usepackage{amsmath}
\usepackage{amsthm}
\usepackage{amssymb}
\usepackage{amsfonts}
\usepackage{epsfig}
\usepackage{setspace}
\usepackage{color}

\setcounter{MaxMatrixCols}{10}
\newtheorem{theorem}{Theorem}[section]

\newtheorem{teo}[theorem]{Theorem}
\newtheorem{lm}[theorem]{Lemma}

\newtheorem{cor}[theorem]{Corollary}
\newtheorem{df}[theorem]{Definition}
\newtheorem{rem}[theorem]{Remark}
\newtheorem{pr}[theorem]{Proposition}

\newcommand{\la}{\lambda}

\DeclareMathOperator{\Hom}{Hom}
\DeclareMathOperator{\de}{det}
\DeclareMathOperator{\GL}{GL}
\DeclareMathOperator{\Dist}{Dist}

\begin{document}

\begin{frontmatter}

\title{Primitive vectors in induced supermodules for general linear supergroups}

\author{Franti\v sek~ Marko}

\ead{fxm13@psu.edu}

\address{Penn State Hazleton, 76 University Drive, Hazleton, PA 18202, USA}

\begin{abstract} The purpose of the paper is to derive formulas that describe the structure of the induced supermodule $H^0_G(\la)$ 
for the general linear supergroup $G=GL(m|n)$ over an algebraically closed field $K$ of characteristic $p\neq 2$. Using these 
formulas we determine primitive $G_{ev}=GL(m)\times GL(n)$-vectors in $H^0_G(\lambda)$. We conclude with remarks related to the linkage 
principle in positive characteristic.
\end{abstract}

\begin{keyword}
general linear supergroup \sep  primitive vectors \sep linkage principle
\MSC 15A15 \sep 17A70 \sep 20G05 \sep 15A72 \sep 13A50 \sep 05E15 
\end{keyword}


\end{frontmatter}

\section*{Introduction}

The concept of the determinant is one of the capstone of linear algebra with applications throughout most areas of mathematics.
A special type of products of determinants, called bideterminants, corresponding to (Young) tableaux appeared in the invariant theory, 
representation theory of the symmetric group and the general linear group.  The main reason of the success of bideterminants is the 
straightening formula which determine that every bideterminant is an integral linear combination of semistandard bideterminants 
(which in turn are linearly independent). The main tool in the proof of the straightening formula is the Laplace duality.
The Laplace duality relies heavily on the combinatorics of tableaux of different shapes and the only linear algebra technique used 
is the Laplace expansion of determinants. For the survey of the role of bideterminants in the representation theory see
\cite{fm3}. 

On the other hand, there are classical determinantal identities of Gauss, Schur, Cauchy, Sylvester, Kronecker, Jacobi, Binet, Laplace, Muir, Cayley
and others (see \cite{bs}) that relate various determinants and should be therefore useful in the above context of bideterminants. In this paper we will 
show a direct application of determinantal identities of the minors of a matrix, like the Jacobi theorem on minors of the adjoint matrix, and of 
the Muir's ``law of extensible minors'' (to derive new determinantal identity from another determinantal identity) to Schur superalgebras and general linear 
supergroups. These determinantal identities are crucial tool in the description of the algebraic structure of the induced supermodules (the main building blocks)
of the general linear supergroup.  

To give an example of determinantal identities we will use, consider an $m\times m$ matrix
$C_{11}= (c_{ij})_{i,j=1, \ldots m}$, denote by 
\[A=\left(%
\begin{array}{cccc}
A_{11} & A_{12} & \ldots & A_{1m} \\ 
A_{21} & A_{22} & \ldots & A_{2m} \\ 
\ldots & \ldots & \ldots & \ldots \\ 
A_{m1} & A_{m2} & \ldots & A_{mm}%
\end{array}%
\right)\] 
the adjoint of the matrix $C_{11}$ and denote by $D$ the determinant of $C_{11}$.
Denote by $C(i_{1},\ldots ,i_{t}|j_{1},\ldots j_{t})$ the $t$-th
minor of $C$ corresponding to the rows $\{i_{1},\ldots ,i_{t}\}$ and columns 
$\{j_{1},\ldots ,j_{t}\}$ of matrix $C$.
Fix $1\leq k_1<k_2\ldots <k_j\leq m$ and $0\leq j<m$. Then 
\[\sum_{a=1}^{m} C(1, \ldots, j+1|k_1,\ldots, k_j, a) A_{a,s}= (-1)^{s+j+1}C(1,\ldots, \hat{s}, \ldots, j+1|k_1, \ldots, k_j)D\]
if $s\leq j+1$, and zero otherwise.
This identity is established in the proof of Lemma \ref{new} and later used to determine the action of general linear supergroup on its induced supermodules. 

Throughout the paper we will be working over an algebraically closed field $K$ of characteristic $p\neq 2$.
In the next section we will define the general linear supergroup $G=GL(m|n)$, its coordinate ring $K[G]$ and the even subgroup $G_{ev}$ of $G$, which is isomorphic to the product 
$\GL(m)\times \GL(n)$ of corresponding general linear groups.
The focus of our investigation are induced supermodules $H^0_G(\la)$ which, in the next section, are explicitly identified  with certain subsupermodules of $K[G]$.
This is very fortunate since it allows us to bypass the full complexity of the $G$-supermodule category and instead we can work inside $K[G]$, the $G$-supermodule 
structure of which is obtained from right superderivations. 
The induced module $H^0_{G_{ev}}(\la)$ has a basis given by bideterminants. In order to describe the structure of $H^0_G(\la)$, it is  
important to understand the action of odd superderivations on these bideterminants from $H^0_{G_{ev}}(\la)$. 
It is this crucial step where we need to apply previously mentioned determinantal identities.  The corresponding 
formulas and other basic properties are derived in Section 2. The main result of Section 2 is Theorem \ref{gen}.

Let $B$ be the Borel subsupergroup of $G$ 
corresponding to lower triangular matrices and $B_{ev}$ be the intersection of $B$ with $G_{ev}$. A vector $w$ is called a primitive $G$-vector if 
the one-dimensional subspace spanned by $w$ is stabilized by $B$ and is called a primitive $G_{ev}$-vector if this subspace is stabilized by $B_{ev}$.
The primitive vectors are important because the categories of $G$-supermodules and $G_{ev}$-modules are highest weight categories and all their irreducible 
(super)modules are generated by a highest weight vector corresponding to a certain weight $\la$, and these highest vectors are primitive. 
If the characteristic $p$ of the base field $K$ is zero, then the category of $G_{ev}$-modules is semisimple and completely described by simple modules 
corresponding to primitive $G_{ev}$-vectors. If the characteristic $p>0$, then $G_{ev}$-modules are not semisimple in general, but primitive vectors still 
describe all simple $G_{ev}$-modules.
The category of $G$-supermodules is not semisimple even when $p=0$. It is well known that if $p=0$, then the induced supermodule $H^0_G(\la)$ 
is irreducible if and only if the highest weight $\la$ is typical in the sense of Kac \cite{kac3}. The characterization of irreducible $H^0_G(\la)$ in the case $p>2$ 
was given in \cite{fm2}.
The characters of simple supermodules for the special cases when $G=G(3|1)$ and $G=G(2|2)$ were determined in \cite{fm} and \cite{gm1}. 
A very important consideration in these papers was the determination of the $G_{ev}$-module structure of $H^0_G(\la)$. It was possible only after the 
primitive $G_{ev}$-vectors were determined.

In the second part of this paper we investigate primitive $G_{ev}$-vectors in $H^0_G(\la)$ for $G=\GL(m|n)$ in the case of characteristic zero.
In this case the multiplicities of primitive $G_{ev}$-vectors of a fixed weight in an irreducible $G$-module are described by Littlewood-Richardson coefficients but it is not clear how to describe a basis of these primitive $G_{ev}$-vectors.  
We will determine the $G_{ev}$-primitive vectors that belong to the first floor $F_1$ of $H^0_G(\la)$. Also, in the case when the multiplicity of primitive $G_{ev}$-vectors of a given weight is maximal (in the sense explained in Section \ref{s4}), the primitive vectors are completely determined in Theorem \ref{Fwedge}. We also show that this procedure of constructing $G_{ev}$-primitive vectors might be useful even when the above mentioned multiplicity is not maximal.  
This way we obtain a better understanding of the $G_{ev}$-structure of $H^0_G(\la)$, and also provide a step toward understanding of the $G_{ev}$-structure of $H^0_G(\la)$ in arbitrary characteristic.

Based on the above results, in the last part of the paper addressed in Section 5, we are able to present 
certain results on blocks and linkage principle for $G$ over the field of characteristic $p>2$. 
In Theorem \ref{link1} we show that if $H^0_{G_{ev}}(\la)$ is irreducible, then simple composition factors of $H^0_G(\la)$ belonging to its first floor $F_1$ are determined using quantities appearing in the definition of the typical weight. 
Additionally, we consider $G_{ev}$-linkage and a relation which we call odd linkage. A combination of both linkages gives a relation analogous to 
to the linkage principle in the characteristic zero case. 

\section{Definitions and notation}

\subsection{General linear supergroup}

Good reference for the basic properties of general linear supergroups, induced modules and superderivations is \cite{z} and \cite{zs}. The reader is advised to consult these papers for further details. 

In order to define the general linear supergroup $G=\GL(m|n)$, start with 
a generic $(m+n)\times (m+n)$-matrix $C=(c_{ij})$ as a block matrix 
\begin{equation*}
C=%
\begin{pmatrix}
C_{11} & C_{12} \\ 
C_{21} & C_{22}%
\end{pmatrix}%
,
\end{equation*}
where $C_{11}, C_{12}, C_{21}$ and $C_{22}$ are matrices of sizes $m\times m$%
, $m\times n$, $n\times m$ and $n\times n$, respectively. 
A superalgebra is a $\mathbb{Z}_2$-graded space in which the parity of the product of two homogeneous elements is the sum of their parities. 
Define the parity $|i|$ of symbol $i$ by $|i|=0 \pmod 2$ for $i=1, \ldots, m$ and 
$|i|=1 \pmod 2$ for $i=m+1, \ldots, m+n$, and the parity $|c_{ij}|$ of $c_{ij}$ as $|i|+|j| \pmod 2$.
The commutative superalgebra $A(m|n)$ is freely generated by elements $c_{ij}$ for $1\leq i,j \leq m+n$
subject to the supercommutativity relation
$c_{ij}c_{kl}=(-1)^{|c_{ij}||c_{kl}|} c_{kl}c_{ij}$.
The coordinate ring $K[G]$ of $G$ is defined as a localization of $A(m|n)$ at the element $DD_{22}$,
where $D= \de(C_{11})$ and $D_{22}= \de(C_{22})$. The ring $K[G]$ has a structure of a Hopf superalgebra.
The general linear supergroup $G=\GL(m|n)$ is a functor $\Hom_{\rm superalg}(K[G], -)$ 
(of morphisms preserving parity of elements) that assigns to every commutative superalgebra $S=S_0\oplus S_1$ (split into its even and odd parts)
the supergroup $GL(m|n)(S)$ 
of invertible $(m+n)\times (m+n)$-matrices
\begin{equation*}
\begin{pmatrix}
S_{11} & S_{12} \\ 
S_{21} & S_{22}%
\end{pmatrix}%
,
\end{equation*}
where $\GL(m)$ and $\GL(n)$ are classical general linear groups and blocks $S_{11}\in GL(m)(S_0), S_{22}\in GL(n)(S_0)$ have coefficients from $S_0$ 
and blocks $S_{12}$ and $S_{21}$ have coefficients from $S_1$.

Of particular importance to us will be the even subgroup $G_{ev}$ of $G$, which is isomorphic to $\GL(m)\times \GL(n)$, 
corresponding to invertible matrices of type 
\begin{equation*}
\begin{pmatrix}
C_{11} & 0 \\ 
0& C_{22}%
\end{pmatrix}%
.\end{equation*}

Let $T$ be the maximal torus of $G$ corresponding to diagonal matrices and $B$ be the lower triangular Borel subsupergroup of $G$. 
Further, denote by $P$ the parabolic subsupergroup of $G$ corresponding to $G_{ev}$ and $B$, consisting of matrices
\begin{equation*}
\begin{pmatrix}
C_{11} & 0 \\ 
C_{21} & C_{22}%
\end{pmatrix}%
\end{equation*}
and denote by $U$ the odd unipotent subgroup of $G$ consisting of matrices
\begin{equation*}
\begin{pmatrix}
E_m & C_{12} \\ 
0 & E_n%
\end{pmatrix}%
,
\end{equation*}
where $E_m$ and $E_n$ denote unit matrices.

Let $V$ be a superspace. The left $G$-supermodule structure on $V$ corresponds uniquely to a right
$K[G]$-supercomodule on $V$. 
Using Sweedler's notation, write the supercomodule map $\tau_V$ as $\tau_V(v)=\sum v_1\otimes h_2$ for $v_1\in V$ and $h_2\in K[G]$.
Then the action of $G$ on $V$ is the family of (functorially compatible) actions of $G(A)$ on $V\otimes A$ 
defined by the rule 
\[g\cdot (v\otimes d)=\sum v_1\otimes g(h_2)d,\]
where $g\in G(A), d\in A,$ and $A$ runs over all commutative superalgebras.

Every $T$-supermodule $V$ decomposes into a direct sum of weight supersubspaces $V_{\lambda}$, where $\lambda =
(\lambda_1, . . . , \lambda_{m+n})\in X(T) = \mathbb{Z}^{m+n}$ are called weights of $V$. 
Here $X(T)$ is abelian group of characters and is freely generated by $\epsilon_1, \ldots, \epsilon_{m+n}$, where $\epsilon_{i}$ picks the $i$-th 
entry in a diagonal matrix from $T$. By definition, $v\in V_{\lambda}$ if and only if 
$\tau_{V|_{T}} (v)= v\otimes \prod_{i=1}^{m+n} c_{ii}^{\lambda_i}$.

Since $t\cdot c_{ij}=c_{ij}\otimes t_j$ for $t\in T(A)$, the weight of $c_{ij}$ is $\epsilon_j$. 
Let $I=(i_1, \ldots, i_r)$ and $J=(j_1, \ldots, j_r)$ be multi-indices with entries in the set $\{1, \ldots, m+n\}$. Define the content of $J$ by $cont(J)=(x_1, \ldots, x_{m+n})$, 
where $x_i$ is the number of occurences of the integer $i$ in $J$. 
Then the weight of the product $c_{I|J}=c_{i_1j_1} \ldots c_{i_rj_r}$ is $cont(J)$.

We will also write the weight $\la$ as \[\la=(\la_1, \ldots, \la_m|\la_{m+1}, \ldots, \la_{m+n}),\]
and \[\lambda=(\lambda^+|\lambda^-)=(\lambda^+_1,
\ldots,\lambda^+_m |\lambda^-_{1}, \ldots, \lambda^-_{n})\] 
and use both notations interchangeably. 

Assume from now on that $\lambda=(\lambda^+|\lambda^-)$
is a dominant integral weight of $G$, that is $\lambda^+_1\geq \ldots \geq \lambda^+_m$ and $%
\lambda^-_{1} \geq \ldots \lambda^-_{n}$ and all $\lambda^{\pm}_i$ are integers. 

The following definition of typical and atypical weights plays an important role in the representation theory of $GL(m|n)$. This definition agrees
with the previously mentioned notion of typicality and atypicality due to Kac \cite{kac3}.

\begin{df} 
For $i=1,\ldots, m$ and $j=1,\ldots, n$ define $\omega_{ij}(\lambda)=\lambda^+_i+\lambda^-_j+m+1-i-j$ and call
the weight $\lambda$ \emph{atypical} if there exist $i\in\{1, \ldots, m\}$ and $j\in\{1,\ldots, n\}$ such that $\omega_{ij}=0$ provided $char K=0$, or 
$\omega_{ij}\equiv 0 \pmod p$ if $char K=p>0$; 
otherwise call the weight $\lambda$ \emph{typical}.
\end{df}

In order to study the structure of the $G$-supermodules, we will use the superalgebra of distributions $\Dist(G)$ of $G$ described in Section 3 of \cite{bk}.
Let $\mathfrak{m}$ be the kernel of the augmentation map $\epsilon$ of the Hopf superalgebra $K[G]$, and 
$Dist_r(G)=(K[G]/\mathfrak{m}^{r+1})^*$, where $*$ is the duality $\Hom_K(-,K)$ for each $r\geq 0$.
Then $Dist(G)=\bigcup_{r\geq 0} Dist_r(G)$. The left action of $G$ on $V$ as above induces a left action of $Dist(G)$ on $V$
by $\phi\cdot v=\sum v_1\phi(h_2)$. Conversely, any integrable $Dist(G)$ module can be lifted in a unique way to $G$ (see section 3 of \cite{bk}).
Let us note that the action $\phi\cdot v$ is often defined in accordance with the rule of signs as $\phi\cdot v=\sum (-1)^{|\phi||v_1|} v_1\phi(h_2)$. As explained on p.100 of 
\cite{mz}, there is an equivalence of corresponding categories of supermodules. We will work with our definition since the computations become simpler that way.

Denote by $e_{ij}$ the elements of $\Dist_1(G)$ determined by $e_{ij}(c_{hk}) = \delta_{ih}\delta_{jk}$ and $e_{ij}(1) = 0$.  
Then the parity of $e_{ij}$ is a sum of parities $|i|$ of $i$ and $|j|$ of $j$.
Then $e_{ij}$ belong to the Lie superalgebra $Lie(G)=(\mathfrak{m}/\mathfrak{m}^2)^*$ which is identified with the general 
linear Lie superalgebra $\mathfrak{gl}(m|n)$. Under this identification, $e_{ij}$ corresponds to the matrix unit which has all entries zeroes except the
entry at the position $(i,j)$ which equals one.
The commutation relations for the matrix units $e_{ij}$ are given as
\[[e_{ab},e_{cd}]=e_{ad}\delta_{bc}+(-1)^{(|a|+|b|)(|c|+|d|)} e_{cb}\delta_{ad}.\] 

Over the field of complex numbers $\mathbb{C}$, the distribution superalgebra $Dist (G)$ is isomorphic to the universal enveloping superalgebra $U_{\mathbb{C}}$ of $\mathfrak{gl}(m|n)$. 
To describe $Dist(G)$ in general, one considers the Kostant $\mathbb{Z}$-form $U_\mathbb{Z}$ is generated by elements $e_{ij}$ for odd $e_{ij}$, 
$e_{ij}^{(r)}= \frac{e_{ij}^r}{r!}$ for even $e_{ij}$, and 
$\binom{e_{ii}}{r}=\frac{e_{ii}(e_{ii}-1)\ldots (e_{ii}-r+1)}{r!}$ for all $r>0$.
Then $Dist(G)$ over the field $K$ is isomorphic to the Hopf superalgebra $K\otimes_{\mathbb{Z}} U_{\mathbb{Z}}$.

Finally, we will represent the natural (left) action of $Dist(G)$ on $A(m|n)$ using (right) superderivations of $A(m|n)$.
A right superderivation $_{ij}D$ of $A(m|n)$ of parity $|i|+|j|\pmod 2$ satisfies
\[(ab)_{ij}D=(-1)^{(|i|+|j|)|b|} (a)_{ij}Db+a(b)_{ij}D\] 
for $a,b\in A(m|n)$ and is given by \[(c_{kl})_{ij}D=\delta_{li}c_{kj}.\]

The superalgebra $A(m|n)$ is a right supercomodule with respect to the comultiplication
\[c_{ij}\mapsto \sum_{1\leq k\leq m+n} c_{ik}\otimes c_{kj}.\] 
Therefore any $g\in G(A)$, where $A$ is a commutative superalgebra, acts on $c_{ij}$ by the rule
\[g\cdot c_{ij}\mapsto \sum_{1\leq k\leq m+n} c_{ik}\otimes g(c_{kj})= \sum_{1\leq k\leq m+n} c_{ik}\otimes g_{kj}.\]
In other words, if we identify $c_{ij}$ with $c_{ij}\otimes 1$ in $A(m|n)\otimes A$, then one can interpret this action as the right-hand side matrix multiplication
$C\to Cg$, 
where $C$ and $g$ are given by block matrices
\[C=\left(\begin{array}{cc}
C_{11} & C_{12} \\
C_{21} & C_{22}
\end{array}\right), g=\left(\begin{array}{cc}
A_{11} & A_{12} \\
A_{21} & A_{22}
\end{array}\right).\]

The element $e_{kl}$ from $Lie(G)\subseteq Dist(G)$ acts on $c_{ij}$ as $e_{kl}\cdot c_{ij}=\delta_{jl}c_{ik}$.
Since the last expression $\delta_{jl}c_{ik}=(c_{ij}) _{lk}D$, the action of $e_{kl}$ on $A(m|n)$ is identical to the action of $_{lk}D$ (not $_{kl}D$) on $A(m|n)$.
Analogously, the divided powers $e_{ij}^{(r)}$ for even $e_{ij}$ and $r>0$ correspond to 
$_{ij}^{(r)}D = \frac{_{ij}^rD}{r!}$ and $\binom{e_{ii}}{r}$ corresponds to $\binom{_{ii}D}{r} = \frac{_{ii}D(_{ii}D+1) \ldots (_{ii}D+r-1)}{r!}$ for $1\leq i\leq m+n$ and $r>0$.
For more details please consult \cite{zs}.

Moreover, both actions extends uniquely to $K[G]$ and we will identify the action of $e_{kl}$ on $K[G]$ with the action of the superderivation $_{lk}D$ extended to $K[G]$ using 
the quotient rule
\[(\frac{a}{b})_{ij}D=\frac{(a)_{ij}Db-a(b)_{ij}D}{b^2}\] 
valid for $a,b\in A(m|n)$ and $b$ even.

Next, we would like to recall the process of modular reduction of $\mathbb{C}[G]$ from the field $\mathbb{C}$ of complex numbers to a field $K$ of characteristic $p>2$. 
The Kostant $\mathbb{Z}$-form of $Dist(G)$ corresponds to the $\mathbb{Z}$-subalgebra generated by the expressions $_{ij}D, _{ij}^{(r)}D$ and $\binom{_{ii}D}{r}$ as before.
Consider the $\mathbb{Z}$-span $\mathbb{C}[G]_{\mathbb Z}$ of elements $\frac{C_{I|K}}{(DD_{22})^s}$ for all multi-indices $I,J$ and integers $s\geq 0$. It is clear that 
$\mathbb{C}[G]_{\mathbb Z}$ 
is stabilized by the action of all $_{ij}D, _{ij}^{(r)}D$ and $\binom{_{ii}D}{r}$ and is therefore a $\mathbb{Z}$-form of $\mathbb{C}[G]$. 
By extending the scalars to the field $K$  we obtain the modular reduction $K[G]=\mathbb{C}[G]_{\mathbb{Z}}\otimes_{\mathbb{Z}} K$ of $\mathbb{C}[G]$.

Our main aim is to understand the structure of the induced $G$-module $H^0_G(\lambda)$. In the next subsection we will see that $H^0_G(\lambda)$ is a $Dist(G)$-supersubmodule of $K[G]$
and therefore its $G$-module structure can be described using superderivations $_{ij}D$ (together with various divided powers of these superderivations if the characteristic $p$ of $K$ 
is positive).

Working with even superderivations is somehow easier since they correspond to the action of even subgroup $G_{ev}$ of $G$. 
We will investigate the action of odd superderivations $_{ij}D$ on generating elements of $H^0_{G_{ev}}(\la)$ which are 
given by bideterminants and described in the following subsection. 
It is this crucial step where we need to apply the determinantal identities mentioned in the introduction of the paper.  The corresponding 
formulas and other basic properties are derived in Section 2. The main result of Section 2 is Theorem \ref{gen}.

\subsection{Induced module $H^0_G(\la)$}

Let $K_{\la}$ be the one-dimensional (even) $B$-supermodule corresponding to the weight $\la$. 
Following \cite{z}, we denote by $H^0_G(\lambda)$ the $G$-supermodule
$H^0(G/B,K_{\la})$ which is isomorphic to the induced supermodule $ind^G_P H^0(P/B, K_{\la})$. 
Analogously, denote by $H^0_{G_{ev}}(\lambda)$ the induced $G_{ev}$-module $H^0(G_{ev}/B_{ev},K_{\la})$ 
corresponding to the weight $\lambda$.
The induced $G_{ev }$-module $H^0_{G_{ev}}(\lambda)$ is isomorphic to $H^0_{GL(m)}(\lambda^+)\times H^0_{GL(n)}(\lambda^-)$, where 
$H^0_{GL(m)}(\lambda^+)$ is the classical $GL(m)$-module induced from the lower triangular Borel sugroup of $GL(m)$  and 
$H^0_{GL(n)}(\lambda^-)$ is the classical $GL(n)$-module induced from the lower triangular Borel sugroup of $GL(n)$.

Another description of $H^0_G(\lambda)$ can be obtained using Weyl modules. Following Section 5 of \cite{z}, 
the universal highest weight supermodule (the Weyl supermodule) $V_G(\la)$ can be defined as $V_G(\la)= \Dist(G)\otimes_{\Dist(P^{opp})} V_{G_{ev}}(\la)$, 
where $P^{opp}$ is the parabolic subgroup that is the transpose of $P$ and $V_{G_{ev}}(\lambda)$ is the clasical Weyl module regarded as a $P^{opp}$-supermodule 
via the epimorphism $P^{opp}\to G_{ev}$. By Proposition 5.8 of \cite{z}, $V_G(\la)\cong H^0_G(\la)^{\langle \tau \rangle}$, where the supertransposition $\tau$ provides a contravariant duality.

Next, we will describe induced modules $H^0_{\GL(m)}(\la^+)$, $H^0_{\GL(n)}(\la^-)$ and $H^0_{G_{ev}}(\la)$. 

Assume that $\la^+=(\la_1^+, \ldots, \la_m^+)$ is a dominant integral weight of $GL(m)$,  $\la^-=(\la_1^-, \ldots, \la_n^-)$ is a dominant integral weight of $GL(n)$ and denote
$r^+=\sum_{i=1}^m \la^+_i$ and $r^-=\sum_{j=1}^n \la^+_j$. Then $\la=(\la^+|\la^-)$ is a dominant integral weight of $GL(m|n)$.  

Assume first that $\lambda$ is a polynomial weight of $G$, 
that is $\lambda^+_m\geq 0$ and $%
\lambda^-_n\geq 0$. Define the tableau $T_{\lambda}^+$ of the shape $%
\lambda^+$ as $T_{\lambda}^+(i,j)=i$ for $i=1, \ldots, m$ and $j=1,\ldots,
\lambda_i$ and the tableau $T_{\lambda}^-$ of the shape $\lambda^-$ as $%
T_{\lambda}^-(i,j)=m+i$ for $i=1, \ldots, n$ and $j=1,\ldots, \lambda_{m+i}$. For more about tableaux and bideterminants, see \cite{m}. 

Fix a basic tableau $T^+$ of shape $\lambda^+$ that lists entries from the set $\{1, \ldots r^+\}$ in increasing order 
from left to right of each row starting from the top and proceeding to the bottom row. 
Then multi-indices $I$ of length $r^+$ are in bijective correspondence to tableaux of shape $\lambda^+$. Denote by $T^+(I)$ the tableau corresponding to $I$.
Then $T^+_{\lambda} = T^+(1^{\lambda^+_1} \ldots m^{\lambda^+_m})$.
Analogously define the basic tableau $T^-$ and by $T^-(J)$ denote the tableau of shape $\lambda^-$ corresponding to the multi-index $J$ of length $r^-$. 
Then $T_{\lambda}^-=T^-((m+1)^{\lambda^-_1} \dots (m+n)^{\lambda^-_n})$.

For a multi-index $I$ of length $r^+$ with (possibly repeated) entries from the set $1, \ldots, m$ denote by $B_+(I)=(T_{\lambda}^+:T^+(I))$ the bideterminant corresponding 
to the tableau $T^+(I)$ of shape $\lambda^+$. Then the induced $\GL(m)$-module of the highest weight $\lambda^+$ is the $K$-span of bideterminants $B_+(I)$
and it has a basis consisting of bideterminants $B_+(I)$ corresponding to standard tableau $T^+(I)$.
Analogously, for a multi-index $J$ of length $r^-$ with (possibly repeated) entries from the set $m+1, \ldots, m+n$ denote by $B_-(J)=(T_{\lambda}^-:T^-(J))$ the bideterminant corresponding to the tableau $T^-(J)$ of shape $\lambda^-$. Then the induced $\GL(n)$-module of the highest weight $\lambda^-$ is the $K$-span of bideterminants $B_-(J)$ 
and it has a basis consisting of bideterminants $B_-(J)$ corresponding to standard tableux $T^-(J)$. 

Consequently, the induced module $H^0_{G_{ev}}(\lambda)$ is spanned by the products $B_+(I)B_-(J)$, where $T^+(I)$ is of shape $\lambda^+$ and $T^-(J)$ is of shape $\lambda^-$. 
It has a basis consisting of such products $B_+(I)B_-(J)$, where both $T^+(I)$ and $T^-(J)$ are standard tableaux.

Now consider the case when $\la$ is not polynomial. If $\la^+_m<0$, 
then the induced $\GL(m)$-module $H^0_{\GL(m)}(\la^+)$ is isomorphic to
$H^0_{\GL(m)}(\la^{++})\otimes (D^{\la^+_m})$, where 
the weight $\la^{++}=\la^+ -\la^+_m(1,1, \ldots 1)$ is polynomial and $(D^{\la^+_m})$ is 
a one-dimensional $\GL(m)$-representation generated by $D^{\la^+_m}$.
Therefore the module $H^0_{\GL(m)}(\la^+)$ has a basis consisting of
products of bideterminants $B_+(I)$ for $I$ standard of shape $\lambda^{++}$ multiplied by $D^{\la^+_m}$.

In the supercase, there is a group-like element \[Ber(C)=\de(C_{11}-C_{12}C_{22}^{-1}C_{21})\de(C_{22})^{-1}\] 
which generates an irreducible one-dimensional $G$-module $Ber$ of the weight 
$\beta=(1,1,\ldots, 1|-1,-1,\ldots, -1)$.
Since $H^0_G(\la)\cong H^0_G(\la-\la^-_n\beta) \otimes Ber^{\la^-_n}$, in what follows we can assume that 
$\la^-$ is a polynomial weight of $\GL(n)$, that is $\la^-_n\geq 0$, since we can reduce the general case to this one by 
tensoring with $Ber^{\la^-_n}$.
 
Assuming $\la^-_n\geq 0$, the module $H^0_{G_{ev}}(\lambda)$ has a basis that is a product of $D^{\la^+_m}$, 
bideterminants $B_+(I)$ for $I$ standard of shape $\lambda^{++}$ and
bideterminants $B_-(J)$ for $J$ standard of shape $\lambda^-$.
Denote $v_{++}=(T_{\lambda^{++}}:T_{\lambda^{++}})$, $v_+=v_{++}D^{\la^+_m}$ and $v_-=(T_{\lambda}^-:T_{\la}^-)$. 
Then the element $v_+v_-$ is the highest weight vector in $H^0_{G_{ev}}(\lambda)\cong H^0_{GL(m)}(\lambda^+)\times H^0_{GL(n)}(\lambda^-)$. 

The following segments explicitly identify the induced module $H^0_G(\la)$ as a supersubmodule of $K[G]$. Based on our previous discussion, its $G$-module structure can be then described (later) by computing the action of superderivations on its elements.

For a superspace $V$ denote by $S(V)$ the supersymmetric superalgebra generated by the superspace $V$. It has a natural grading and its $t$-homogeneous component will be denoted by
$S^t(V)$. In particular, $S(C_{12})$ is the supersymmetric superalgebra $S(\bigoplus\limits_{\substack{1\leq i\leq m \\ m+1\leq j \leq m+n}} Kc_{ij})$, 
which is a subsuperalgebra of $A(m|n)$. 

According to Remark 5.1 of \cite{z}, there is an isomorphism of affine superschemes $G\to P\times U$ 
defined by 
\begin{equation*}
\begin{pmatrix}
C_{11} & C_{12} \\ 
C_{21} & C_{22}
\end{pmatrix}
\mapsto 
\Big( \begin{pmatrix}
C_{11} & 0 \\ 
C_{21} & C_{22}-C_{21}C_{11}^{-1}C_{12}
\end{pmatrix},
\begin{pmatrix}
E_m & C_{11}^{-1}C_{12} \\ 
0 & E_n
\end{pmatrix} \Big).
\end{equation*}

The inverse morphism is given by the multiplication map.
The dual morphism $K[P]\otimes K[U] \to K[G]$ is a superalgebra isomorphism. 

Consider the $G_{ev}$-action on $K[P]$ via right multiplication and observe that $K[U]\simeq S(C_{12})$. 
The action of $G_{ev}$ on $P\times U$ induces the $G_{ev}$-module structure on $K[P]\otimes S(C_{12})$ given by the rule 
$(p, u)\cdot g=(pg, g^{-1}ug)$ for $p\in P$, $u\in U$ and $g\in G_{ev}$.
Then the morphism $G\to P\times U$ is $G_{ev}$-equivariant and so is $K[P]\otimes S(C_{12})\to K[G]$. 
In particular, the image of $S(C_{12})$ is a $G_{ev}$-submodule of $K[G]$.

The $G$-supermodule $H^0_G(\lambda)$ is described explicitly using the
isomorphism $\tilde{\phi} :H^0_{G_{ev}}(\lambda)\otimes S(C_{12})\to H^0_G(\lambda)$ of superspaces
defined in Lemma 5.1 of \cite{z}. This map is a restriction of the multiplicative morphism $\phi:K[G]\to K[G]$ given on generators as
follows: 
\begin{equation*}
C_{11}\mapsto C_{11}, C_{21}\mapsto C_{21}, C_{12}\mapsto
C_{11}^{-1}C_{12}, C_{22}\mapsto C_{22}-C_{21}C_{11}^{-1}C_{12}.
\end{equation*}

The action of $G$ on $K[G]$ restricts to the action of $G_{ev}$ on $H^0_{G_{ev}}(\lambda)\otimes S(C_{12})$, considered as a subspace of $K[G]$. However, we are going to consider a 
different $G_{ev}$-action on $H^0_{G_{ev}}(\lambda)\otimes S(C_{12})$, defined below, such that the above map $\tilde{\phi}$ is a morphism of $G_{ev}$-modules
and the $G_{ev}$-structure on $\tilde{\phi}(H^0_{G_{ev}}(\lambda)\otimes S(C_{12}))$ is induced from the structure of the $G_{ev}$-module $H^0_G(\lambda)|_{G_{ev}}$.

The group $G_{ev}$ acts on $G$ by multiplication on the right as follows. For each commutative superalgebra $A$ and elements
$g=\left(\begin{array}{cc} C_{11} & C_{12} \\ C_{21} & C_{22} \end{array}\right)\in G(A)$ and 
$h=\left(\begin{array}{cc} B_{11} & 0 \\0 & B_{22} \end{array}\right)\in G_{ev}(A)$, the formula
$g.h= \left(\begin{array}{cc}
C_{11}B_{11} & C_{12}B_{22} \\
C_{21}B_{11} & C_{22}B_{22}
\end{array}\right)$ defines the action of $G_{ev}$ on the generators of $A(m|n)$. This action extends linearly and multiplicatively to all elements of $A(m|n)$ 
and defines the $G_{ev}$-action on $GL(m|n)$.

The action of $h$ on images under $\phi$ is given by
\[\phi(C_{11}).h = C_{11}B_{11}=\phi(C_{11})B_{11},\] 
\[\phi(C_{21}).h = C_{21}B_{11}=\phi(C_{21})B_{11},\]
\[\phi(C_{12}).h = (C_{11}^{-1}C_{12}).h=B_{11}^{-1}C_{11}^{-1}C_{12}B_{22}=B_{11}^{-1}\phi(C_{12})B_{22}\]
and 
\[\phi(C_{22}).h = (C_{22}-C_{21}C_{11}^{-1}C_{12}).h=(C_{22}-C_{21}C_{11}^{-1}C_{12})B_{22}=\phi(C_{22})B_{22}.\] 
Therefore the restriction of $\tilde{\phi}$ to $H^0_{G_{ev}}(\lambda)\otimes 1$ is a morphism of $G_{ev}$-modules.
Let us define the $G_{ev}$-structure on the superspace $C_{12}$ via 
$C_{12}.h = B_{11}^{-1}C_{12}B_{22}$ and denote this $G_{ev}$-module by $Y$.
Then the map $\tilde{\phi}:H^0_{G_{ev}}\otimes S(Y) \to H^0_G(\lambda)$ is a morphism of $G_{ev}$-modules and the $G_{ev}$-action on the image of $\tilde{\phi}$ is induced from 
$H_G(\lambda)|_{G_{ev}}$. In particular, the maps $\phi$ and $\tilde{\phi}$ preserve the weights. 

The supersymmetric superalgebra $S(Y)$, considered as a $G_{ev}$-module, is isomorphic to the exterior algebra $\Lambda(Y)$. 
The $G_{ev}$-structure of $Y$ can be easily described as follows. Denote by $V_m$ the natural (defining) representation of $\GL(m)$-module on $m\times 1$ column vectors
with the highest vector $v_1$ of the weight $\epsilon_1=(1,0, \ldots, 0)$. The module
$V_m$ is spanned by vectors $v_i=e_{i1}v_1$ for $i=1, \ldots, m$, where $e_{i1}\in \Dist(\GL(m))$ is the appropriate matrix unit. 
The dual $\GL(m)$-module $(V_m)^*$ has the highest vector $(v_m)^*$ of the weight $-\epsilon_m=(0, \ldots, 0, -1)$. 
Denote by $V_n$ the natural representation of $\GL(n)$-module on $n\times 1$ column vectors.
Then the $G_{ev}$-module $(V_m)^*\otimes V_n$, where the action of $\GL(m)$ on $V_n$ and the action of $\GL(n)$ 
on $(V_m)^*$ are trivial, is isomorphic to $Y$. 

We have described earlier the structure of $H^0_{G_{ev}}(\la)$ using products of bideterminants. When we are working in $H^0_G(\la)$, we need to replace $H^0_{G_{ev}}(\la)$ with 
its image under the map $\phi$. From now on, we will consider $H^0_{G_{ev}}(\la)$ embedded inside $H^0_G(\la)$ using the map $\phi$. Because of this we need to adjust the notation for bideterminants in order to accomodate the effect of the map $\phi$.

Recall that the matrix $A=(A_{ij})$ is the adjoint of the matrix $C_{11}$. Then $C_{11}^{-1}=\frac{1}{D}A$
and 
\begin{equation*}
y_{ij}=\phi(c_{ij})=\frac{A_{i1}c_{1j}+A_{i2}c_{2j}+\ldots +A_{im}c_{mj}}{D}
\end{equation*}
for $1\leq i\leq m$ and $m+1\leq j \leq m+n$. Moreover, for $m+1\leq k,l
\leq m+n$ we have 
\begin{equation*}
\phi(c_{kl})=c_{kl}-c_{k1}y_{1l} - \ldots -c_{km}y_{ml}.
\end{equation*}

Let $M=(m_{ij})$ be a matrix of size $s\times s$. Let $\{i_{1},\ldots
,i_{t}\}$ and $\{j_{1},\ldots ,j_{t}\}$ be sequences of elements from $%
\{1,\ldots ,s\}$. Denote by $M(i_{1},\ldots ,i_{t}|j_{1},\ldots j_{t})$ the
determinant of the matrix of size $t\times t$ such that its entry in the $a$%
-th row and the $b$-th column is $m_{i_{a},j_{b}}$. If the entries in $%
\{i_{1},\ldots ,i_{t}\}$ and $\{j_{1},\ldots ,j_{t}\}$ are pairwise
different, then $M(i_{1},\ldots ,i_{t}|j_{1},\ldots j_{t})$ is the $t$-th
minor of $M$ corresponding to the rows $\{i_{1},\ldots ,i_{t}\}$ and columns 
$\{j_{1},\ldots ,j_{t}\}$ of matrix $M$. We will use this notation for the
matrix $C$ and further denote $C(1,2,\ldots ,t|j_{1},\ldots ,j_{t})$ by $%
C(j_{1},\ldots ,j_{t})$.

If $1\leq i_1, \ldots, i_s \leq m$, then denote by $D^+(i_1, \ldots, i_s)$
the determinant 
\begin{equation*}
\begin{array}{|ccc|}
c_{1,i_1} & \ldots & c_{1,i_s} \\ 
c_{2,i_1} & \ldots & c_{2,i_s} \\ 
\ldots & \ldots & \ldots \\ 
c_{s,i_1} & \ldots & c_{s,i_s}%
\end{array}%
.
\end{equation*}
Clearly, if some of the numbers $i_1, \ldots, i_s$ coincide, then $%
D^+(i_1,\ldots, i_s)=0$.

First, we want to express bideterminants corresponding to a tableau $T(I)$ of shape $\lambda^+$ using the above determinants $D^+$.
Assume that $\la^+$ is a polynomial weight and a tableau $T(I)$ of shape $\lambda^+$ is such that its entry in
the $a$-th row and $b$-column is $i_{ab}\in\{1, \ldots, m\}$. The bideterminant $B^+(I)$ is a
product of determinants $D^+(i_{1b},\ldots, i_{m,b})$ for $b=1, \ldots
\lambda^+_m$, $D^+(i_{1b},\ldots, i_{m-1,b})$ for $b=\lambda^+_m+1, \ldots
\lambda^+_{m-1}$, \ldots, and $D^+(i_{1b})$ for $b=\lambda^+_2+1,\ldots,
\lambda^+_1$. If we denote the length of the $b$-th column of $T(I)$ by $%
\ell(b)$, we can write 
\begin{equation*}
B^+(I)=\prod_{b=1}^{\lambda^+_1}D^+(i_{1b},\ldots, i_{\ell(b),b}).
\end{equation*}
It is useful to observe that the weight of the bideterminant $B^+(I)$ is $cont(I)$ (see also pp.42-43 of \cite{m}).
Let us note that there is no essential distinction between previously defined $B_+(I)$ and $B^+(I)$. This is because $\phi(c_{i,j})=c_{i,j}$ for 
$1\leq i\leq m$ and $1\leq j\leq m$. 

If $m+1\leq j_1, \ldots, j_s \leq m+n$, then denote by $D^-(j_1, \ldots, j_s)
$ the determinant 
\begin{equation*}
\begin{array}{|ccc|}
\phi(c_{m+1,j_1}) & \ldots & \phi(c_{m+1,j_s}) \\ 
\phi(c_{m+2,j_1}) & \ldots & \phi(c_{m+2,j_s}) \\ 
\ldots & \ldots & \ldots \\ 
\phi(c_{m+s,j_1}) & \ldots & \phi(c_{m+s,j_s})%
\end{array}%
.
\end{equation*}
Clearly, if some of the numbers $j_1, \ldots, j_s$ coincide, then $%
D^-(j_1,\ldots, j_s)=0$.

Assume that a tableau $T(J)$ of shape $\lambda^-$ is such that its entry in
the $a$-th row and $b$-column is $j_{ab}\in\{m+1, \ldots, m+n\}$. The bideterminant $B^-(J)$ is a
product of determinants $D^-(j_{1b},\ldots, j_{n,b})$ for $b=1, \ldots
\lambda^-_n$, $D^-(j_{1b},\ldots, j_{n-1,b})$ for $b=\lambda^-_n+1, \ldots
\lambda^-_{n-1}$, \ldots, and $D^-(j_{1b})$ for $b=\lambda^-_2+1,\ldots,
\lambda^-_1$. If we denote the length of the $b$-th column of $T(J)$ by $%
\ell(b)$, we can write 
\begin{equation*}
B^-(J)=\prod_{b=1}^{\lambda^-_1}D^-(j_{1b},\ldots, j_{\ell(b),b}).
\end{equation*}
Similar to the previous case, the weight of $B^-(J)$ is $cont(J)$.
The expression $B^-(J)$ will be also called bideterminant corresponding to $J$ and it corresponds to $\phi(B_-(J))$, the image of previously defined 
bideterminant $B_-(J)$ under the map $\phi$. 

Since the highest vector $v_+$ of $H^0_{\GL(m)}(\la^+)$ and the highest vector $v_-$ of $H^0_{\GL(n)}(\la^-)$ were described earlier,
their images $v^+$ and $v^-$ respectively under the map $\phi$ are identified as the following elements of $H^0_{G_{ev}}(\la)$: 
\[v^+=\prod_{a=1}^m D^+(1,\ldots, a)^{\la^+_a-\la^+_{a+1}}, \qquad
v^-=\prod_{b=1}^n D^-(m+1, \ldots, m+b)^{\la^-_b - \la^-_{b+1}},\]
where $\la^+_{m+1}=0=\la^-_{n+1}$. Therefore the product $v=v^+v^-$ is the highest vector of $H^0_G(\lambda)$.

\section{Basic formulas}

Some of the formulas derived in this section were mentioned in our previous paper \cite{fm2}. The treatment given here extends these results
and is more comprehensive.

\subsection{Action of superderivations $_{kl}D$ on $\phi(c_{ij})$ for $m+1\leq j \leq m+n$}

We start by computing the action of superderivations $_{kl}D$ on elements $y_{ij}$.
The following statement is Lemma 2.1 of \cite{fm2}. We include its proof here for the convenience of the reader.

\begin{lm}\label{1} If $1\leq i,k \leq m$ and $m+1\leq j,l \leq m+n$, then 
\begin{equation*}
(y_{ij})_{kl}D=y_{il}y_{kj}.
\end{equation*}
\end{lm}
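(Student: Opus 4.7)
The plan is to avoid computing Laplace expansions of cofactors directly; instead I will differentiate the identity $C_{11}^{-1} C_{11} = I_m$ and then apply the super-Leibniz rule to $y_{ij}$ viewed as $(C_{11}^{-1}C_{12})_{ij}$. Explicitly, $y_{ij} = \sum_{d=1}^{m} (C_{11}^{-1})_{id}\, c_{dj}$, so everything reduces to knowing how ${}_{kl}D$ acts on entries of $C_{11}^{-1}$.

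First I would differentiate $\sum_c (C_{11}^{-1})_{ac} c_{cb} = \delta_{ab}$. Since each $c_{cb}$ with $c,b \le m$ is even, the parity factor in the Leibniz rule is trivial, and I get
\[
0 = \sum_c \bigl((C_{11}^{-1})_{ac}\bigr){}_{kl}D \cdot c_{cb} + \sum_c (C_{11}^{-1})_{ac} \cdot (c_{cb}){}_{kl}D.
\]
Using $(c_{cb}){}_{kl}D = \delta_{bk} c_{cl}$ and $\sum_c (C_{11}^{-1})_{ac} c_{cl} = y_{al}$, then multiplying on the right by $(C_{11}^{-1})_{bd}$ and summing over $b$ gives the key intermediate formula $\bigl((C_{11}^{-1})_{ad}\bigr){}_{kl}D = - y_{al}\, (C_{11}^{-1})_{kd}$.

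Next I would plug this into the Leibniz rule for $y_{ij} = \sum_d (C_{11}^{-1})_{id} c_{dj}$. Here $c_{dj}$ is odd (since $d\le m$ and $j\ge m+1$) and ${}_{kl}D$ is odd too, so the first term of the Leibniz rule picks up $(-1)^{(|k|+|l|)|c_{dj}|} = -1$:
\[
(y_{ij}){}_{kl}D = - \sum_d \bigl((C_{11}^{-1})_{id}\bigr){}_{kl}D \cdot c_{dj} + \sum_d (C_{11}^{-1})_{id} \cdot (c_{dj}){}_{kl}D.
\]
The second sum vanishes because $(c_{dj}){}_{kl}D = \delta_{jk} c_{dl} = 0$ (as $j \ge m+1 > m \ge k$). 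Substituting the intermediate formula into the first sum yields $y_{il} \sum_d (C_{11}^{-1})_{kd} c_{dj} = y_{il}\, y_{kj}$, which is the claim.

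The only real obstacle is parity bookkeeping: ${}_{kl}D$ is odd, the factors $c_{dj}$ appearing in $y_{ij}$ are odd, while the entries of $C_{11}^{-1}$ are even, so one must carefully track the $(-1)^{(|k|+|l|)|b|}$ in the Leibniz rule at each of the two applications. The two minus signs (one from the differentiation of $C_{11}^{-1}$, one from the odd factor $c_{dj}$) cancel, which is exactly what produces the clean formula $y_{il}y_{kj}$ on the right-hand side.
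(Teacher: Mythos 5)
Your proof is correct, and it takes a genuinely different route from the paper. The paper works with the cleared-denominator expressions $Dy_{ij}=\sum_a A_{ia}c_{aj}$, splits into the cases $i=k$, $i<k$, $i>k$, and in the main case invokes the Jacobi theorem on minors of the adjoint matrix, $A_{ia}A_{kb}-A_{ka}A_{ib}=\pm D\,M(ab,ki)$, to match $(Dy_{il})(Dy_{kj})-(Dy_{kl})(Dy_{ij})$ against $D(Dy_{ij})_{kl}D$. You instead differentiate $C_{11}^{-1}C_{11}=I_m$ to get the uniform intermediate identity $\bigl((C_{11}^{-1})_{ad}\bigr)_{kl}D=-y_{al}(C_{11}^{-1})_{kd}$ (the super analogue of $d(M^{-1})=-M^{-1}(dM)M^{-1}$) and then apply the Leibniz rule once more to $y_{ij}=\sum_d (C_{11}^{-1})_{id}c_{dj}$; your sign bookkeeping is right, the two minus signs do cancel, and no case analysis is needed. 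I checked the key steps: the second sum in your first differentiation is $\delta_{bk}y_{al}$, the right-multiplication by $(C_{11}^{-1})_{bd}$ is legitimate because all entries of $C_{11}$ and $C_{11}^{-1}$ are even and hence central among themselves, and the final substitution gives $y_{il}y_{kj}$ in the correct order without any reordering sign. The one point you use implicitly is that the extension of $_{kl}D$ to the localization $K[G]$ via the quotient rule is still a superderivation, so that Leibniz may be applied to products whose first factor is $(C_{11}^{-1})_{ad}=A_{ad}/D\notin A(m|n)$; this is standard and the paper itself relies on it elsewhere, so it is not a gap. What each approach buys: yours is shorter, avoids the determinantal identity entirely, and generalizes mechanically (the same two-line calculation also recovers Lemmas \ref{1'}--\ref{1'''}); the paper's fits its stated theme of deriving the supermodule structure from classical determinantal identities, and the Jacobi/Muir machinery it sets up here is reused in Lemma \ref{new}, where your shortcut would not obviously apply.
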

\begin{proof}
First we show that $(D)_{kl}D=Dy_{kl}$. Write $D=A_{k1}c_{1k}+\ldots +A_{km}c_{mk}$. Since $(-1)^{k+j}A_{kj}$
is the determinant of the matrix obtained by removing the $j$-th row and $k$th column from $C_{11}$, it follows that 
$A_{kj}$ is a sum of monomials in the variables $c_{rs}$ for $r\neq j$ and $s\neq k$. Therefore by the superderivation property of $_{kl}D$
we infer that $(A_{kj})_{kl}D=0$. Since $(c_{ik})_{kl}D=c_{il}$ we conclude that $(D)_{kl}D=A_{k1}c_{1l}+\ldots +A_{km}c_{ml}=Dy_{kl}$.

Assume that $i=k$. Since $Dy_{kj}=A_{k1}c_{1j}+\ldots +A_{km}c_{mj}$, $(A_{ka})_{kl}D=0$ and $(c_{aj})_{kl}D=0$ for every $a=1, \ldots m$, 
using the superderivation property of $_{kl}D$ we derive $(Dy_{kj})_{kl}D=0$. Then $(Dy_{kj})_{kl}D=-Dy_{kl}y_{kj}+D(y_{kj})_{kl}D$ 
implies that $(y_{kj})_{kl}D=y_{kl}y_{kj}$.

Therefore we can assume $m\geq 2$. For $a\neq c$ and $b\neq d$, denote by $%
M(ab|cd)$ the $(m-2)\times (m-2)$ minor of the matrix $C_{11}$ obtained by
deleting $a$-th and $b$-th row and $c$-th and $d$-th columns. If $a=b$ or $%
c=d$, then set $M(ab,cd)=0$.

Now assume that $i<k$. Expanding the determinant representing $A_{ib}$ by
the column containing entries from the $k$-th column of the matrix $C_{11}$
we obtain 
\begin{equation*}
(Dy_{ij})_{kl}D=(A_{i1}c_{1j}+\ldots + A_{im}c_{mj})_{kl}D
\end{equation*}
\begin{equation*}
=\sum_{b=1}^m \sum_{a=1}^{b-1}  (-1)^{a+b+k+i} M(ab, ki) c_{al}c_{bj}
-\sum_{b=1}^m \sum_{a=b+1}^{m} (-1)^{a+b+k+i} M(ab, ki) c_{al}c_{bj}.
\end{equation*}

On the other hand,

\begin{equation*}
(Dy_{il})(Dy_{kj})=(A_{i1}c_{1l}+\ldots + A_{im}c_{ml})(A_{k1}c_{1j}+\ldots
+ A_{km}c_{mj})= \sum_{b=1}^m \sum_{a=1}^m A_{ia}A_{kb}c_{al}c_{bj}
\end{equation*}
and 
\begin{equation*}
(Dy_{kl})(Dy_{ij})=(A_{k1}c_{1l}+\ldots + A_{km}c_{ml})(A_{i1}c_{1j}+\ldots
+ A_{im}c_{mj})= \sum_{b=1}^m \sum_{a=1}^m A_{ka}A_{ib}c_{al}c_{bj}
\end{equation*}
implies that 
\begin{equation*}
(Dy_{il})(Dy_{kj})-(Dy_{kl})(Dy_{ij})=\sum_{b=1}^m \sum_{a=1}^m
(A_{ia}A_{kb}-A_{ka}A_{ib})c_{al}c_{bj}.
\end{equation*}

Using the Jacobi Theorem on minors of the adjoint matrix (see \cite{g},
p.21, \cite{f}, p.57 or Theorem 2.5.2 of \cite{p}) we have that $%
A_{ia}A_{kb}-A_{ka}A_{ib}=(-1)^{a+b+k+i} D M(ab, ki)$ for $a<b$ and $%
A_{ia}A_{kb}-A_{ka}A_{ib}=(-1)^{a+b+k+i+1} D M(ab, ki)$ for $a>b$.

Therefore 
\begin{equation*}
(Dy_{il})(Dy_{kj})-(Dy_{kl})(Dy_{ij})=D(Dy_{ij})_{kl}D=-D^2y_{kl}y_{ij}+D^2(y_{ij})_{kl}D
\end{equation*}
and this implies $(y_{ij})_{kl}D=y_{il}y_{kj}$.

The remaining case $i>k$ can be handled analogously.

\end{proof} 

\begin{lm}\label{1'} If $1\leq i,k \leq m$ and $m+1\leq j,l \leq m+n$, then 
\begin{equation*}
(y_{ij})_{lk}D=\delta_{ik}\delta_{jl}.
\end{equation*}
\end{lm}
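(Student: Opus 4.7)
The plan is to expand $y_{ij}=D^{-1}(A_{i1}c_{1j}+\ldots+A_{im}c_{mj})$ and apply the quotient rule together with the superderivation product rule, then recognize the resulting sum as a classical Laplace expansion. The computation should be substantially shorter than that of Lemma \ref{1}, because $_{lk}D$ (with $l>m$ and $k\leq m$) is particularly inert on the $C_{11}$-block.

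The preliminary observation I would establish is that $_{lk}D$ annihilates every polynomial in the entries of $C_{11}$. This is immediate from the defining rule $(c_{rs})_{lk}D=\delta_{sl}c_{rk}$: for any entry $c_{rs}$ of $C_{11}$ we have $s\leq m<l$, so $\delta_{sl}=0$; the superderivation property then propagates the vanishing to any polynomial in such entries. In particular $(D)_{lk}D=0$ and $(A_{ia})_{lk}D=0$ for every $a=1,\ldots,m$, since $D$ and the cofactors $A_{ia}$ are polynomials in the $c_{rs}$ with $r,s\leq m$.

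With this in hand, the quotient rule (together with $(D)_{lk}D=0$) reduces $(y_{ij})_{lk}D$ to $D^{-1}\bigl(\sum_a A_{ia}c_{aj}\bigr)_{lk}D$. Since $(A_{ia})_{lk}D=0$, the superderivation product rule leaves only the term where $_{lk}D$ strikes $c_{aj}$, contributing $A_{ia}\delta_{jl}c_{ak}$ in each summand. Summing over $a$, the identity collapses to evaluating $\sum_{a=1}^{m}A_{ia}c_{ak}$, which by Laplace expansion equals $\delta_{ik}D$: when $i=k$ this is the expansion of $D$ along its $i$-th column, and when $i\neq k$ it is the determinant of the matrix obtained from $C_{11}$ by replacing column $i$ with column $k$, which has two equal columns and is therefore $0$. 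Dividing through by $D$ yields $(y_{ij})_{lk}D=\delta_{ik}\delta_{jl}$. There is no substantive obstacle, since unlike in Lemma \ref{1} the odd derivation $_{lk}D$ does not interact with the adjugate entries, and so the Jacobi theorem on minors of the adjoint matrix is not needed.
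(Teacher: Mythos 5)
Your proof is correct and follows essentially the same route as the paper: expand $y_{ij}=D^{-1}\sum_a A_{ia}c_{aj}$, note that $_{lk}D$ kills $D$ and every $A_{ia}$ because these are polynomials in entries of $C_{11}$ whose column indices never equal $l$, and reduce to the adjugate identity $\sum_a A_{ia}c_{ak}=\delta_{ik}D$. The paper's proof is just a terser statement of the same computation, so there is nothing to add.
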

\begin{proof}
Since $y_{ij}=\frac{A_{i1}c_{1j}+A_{i2}c_{2j}+\ldots +A_{im}c_{mj}}{D}$, it is clear that $(y_{ij})_{lk}D=0$ unless
$l=j$. If $l=j$, then $(y_{ij})_{jk}D=\frac{A_{i1}c_{1k}+A_{i2}c_{2k}+\ldots +A_{im}c_{mk}}{D}=\delta_{ik}$.
\end{proof}

\begin{lm}\label{1''} If $1\leq i \leq m$ and $m+1\leq j,k,l \leq m+n$, then 
\begin{equation*}
(y_{ij})_{kl}D=\delta_{jk}y_{il}.
\end{equation*}
\end{lm}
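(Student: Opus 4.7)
The plan is to apply the superderivation $_{kl}D$ directly to the explicit formula
\[ y_{ij} = \frac{A_{i1}c_{1j} + A_{i2}c_{2j} + \ldots + A_{im}c_{mj}}{D} \]
using the quotient rule, exactly as was done for the earlier Lemmas \ref{1} and \ref{1'}.

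First I would note the parity bookkeeping: since $k, l \in \{m+1,\ldots,m+n\}$ both have parity $1$, the derivation $_{kl}D$ is even, so the Leibniz rule introduces no signs. Next I would observe that $_{kl}D$ annihilates every generator $c_{rs}$ with $s \leq m$, because $(c_{rs})_{kl}D = \delta_{sk}c_{rl}$ and $\delta_{sk}=0$ when $s \leq m < k$. Since the determinant $D = \det(C_{11})$ and each cofactor $A_{ip}$ are polynomials in the $c_{rs}$ with $r,s\leq m$, this yields $(D)_{kl}D = 0$ and $(A_{ip})_{kl}D = 0$ for every $p$.

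Third, I would apply the quotient rule (legitimate because $D$ is even) and the product rule to the numerator. Only the factors $c_{pj}$ are moved by $_{kl}D$, and $(c_{pj})_{kl}D = \delta_{jk}c_{pl}$, so
\[ (y_{ij})_{kl}D = \frac{1}{D}\sum_{p=1}^{m} A_{ip}(c_{pj})_{kl}D = \delta_{jk}\cdot\frac{A_{i1}c_{1l} + \ldots + A_{im}c_{ml}}{D} = \delta_{jk}\,y_{il}, \]
which is the claim. There is no real obstacle here; the statement is essentially a direct unwinding of the formula for $y_{ij}$, with the only subtle points being the parity check (so that Leibniz applies without signs) and the verification that $D$ and the cofactors $A_{ip}$ lie in the kernel of $_{kl}D$.
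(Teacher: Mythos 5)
Your proof is correct and follows essentially the same route as the paper: both arguments rest on the observations that $_{kl}D$ kills $D$ and the cofactors $A_{ip}$ (since these involve only $c_{rs}$ with $s\leq m<k$) and that $(c_{pj})_{kl}D=\delta_{jk}c_{pl}$. The paper phrases this by computing $(Dy_{ij})_{kl}D=\delta_{jk}Dy_{il}$ and then dividing by $D$, which is just your quotient-rule computation written multiplicatively.
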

\begin{proof}
Since $(Dy_{ij})_{kl}D=(A_{i1}c_{1j}+A_{i2}c_{2j}+\ldots +A_{im}c_{mj})_{kl}D=\delta_{jk}(A_{i1}c_{1l}+A_{i2}c_{2l}+\ldots +A_{im}c_{ml})=
\delta_{jk}Dy_{il}$ and $(D)_{kl}D=0$, the claim follows. 
\end{proof}

\begin{lm}\label{1'''} 
If $1\leq i ,k, l\leq m$ and $m+1\leq j\leq m+n$, then 
\begin{equation*}
(y_{ij})_{kl}D=-\delta_{li}y_{kj}.
\end{equation*}
\end{lm}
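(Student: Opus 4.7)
The plan is to proceed exactly as in Lemma \ref{1}: start from the explicit formula $Dy_{ij}=A_{i1}c_{1j}+\ldots+A_{im}c_{mj}$, apply the superderivation $_{kl}D$ to both sides, and solve for $(y_{ij})_{kl}D$. Since $1\le k,l\le m$, the superderivation $_{kl}D$ is even, so the Leibniz rule introduces no signs. Two easy observations clear most of the ground: $(c_{sj})_{kl}D=\delta_{jk}c_{sl}=0$ because $j\ge m+1>m\ge k$, and $(D)_{kl}D=\delta_{kl}D$ (already established in the proof of Lemma \ref{1} for $k=l$, and zero for $k\ne l$ since then the relevant determinant has a repeated column). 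With these, applying $_{kl}D$ to $Dy_{ij}=\sum_s A_{is}c_{sj}$ reduces the problem to computing $\sum_s (A_{is})_{kl}D\cdot c_{sj}$.

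The crux is therefore the identity
\[
(A_{is})_{kl}D=\delta_{kl}A_{is}-\delta_{il}A_{ks},
\]
which I would derive without explicit cofactor combinatorics by differentiating the Cramer relation $\sum_s A_{us}c_{sr}=\delta_{ur}D$ (valid for $1\le u,r\le m$ and equivalent to $AC_{11}=D\cdot I$). Applying $_{kl}D$ to this relation and using $(D)_{kl}D=\delta_{kl}D$ and $(c_{sr})_{kl}D=\delta_{rk}c_{sl}$ yields
\[
\sum_s (A_{us})_{kl}D\cdot c_{sr}=\delta_{ur}\delta_{kl}D-\delta_{rk}\delta_{ul}D=\sum_s(\delta_{kl}A_{us}-\delta_{ul}A_{ks})c_{sr}.
\]
Since $C_{11}$ is invertible over $K[G]$, the system of equations obtained by varying $r$ uniquely determines the coefficient of each $c_{sr}$, which gives the claimed identity.

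Substituting back, the right-hand side of the $_{kl}D$-derivative of $Dy_{ij}=\sum_s A_{is}c_{sj}$ becomes $\delta_{kl}\sum_s A_{is}c_{sj}-\delta_{il}\sum_s A_{ks}c_{sj}=\delta_{kl}Dy_{ij}-\delta_{il}Dy_{kj}$, while the left-hand side is $\delta_{kl}Dy_{ij}+D(y_{ij})_{kl}D$. Cancelling the common $\delta_{kl}Dy_{ij}$ and dividing by $D$ produces $(y_{ij})_{kl}D=-\delta_{li}y_{kj}$, as required.

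The only real obstacle is the auxiliary formula for $(A_{is})_{kl}D$. One could instead prove it directly by expanding the adjoint entry and tracking how the superderivation replaces column $k$ by column $l$ (so that the result is $A_{is}$ when $l=k$, zero when $l\ne k$ and $l\notin\{i\}$, and $-A_{ks}$ when $l=i$), but the Cramer-rule trick above sidesteps this bookkeeping and fits the style of the surrounding lemmas.
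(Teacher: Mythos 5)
Your argument is correct, and it follows the paper's overall skeleton (expand $Dy_{ij}=\sum_s A_{is}c_{sj}$, note $(c_{sj})_{kl}D=0$ and $(D)_{kl}D=\delta_{kl}D$, then solve for $(y_{ij})_{kl}D$), but you obtain the crucial ingredient by a genuinely different method. The paper computes $(A_{ia})_{kl}D$ directly from the determinantal description of the adjugate entries: it splits into the cases $i=k$ (where $A_{ka}$ contains no variables from column $k$, so the derivative vanishes) and $i\neq k$ (where applying $_{kl}D$ amounts to replacing column $k$ by column $l$ in the submatrix, giving $0$ for $l\neq i$ and $-A_{ka}$ for $l=i$). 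You instead differentiate the Cramer relation $\sum_s A_{us}c_{sr}=\delta_{ur}D$ and use the invertibility of $C_{11}$ over $K[G]$ to read off the uniform identity $(A_{is})_{kl}D=\delta_{kl}A_{is}-\delta_{il}A_{ks}$. Your route avoids the cofactor bookkeeping entirely, handles all cases (including $k=l\neq i$, which the paper's case analysis passes over somewhat quickly) in one stroke, and is in the same spirit as the Jacobi-identity trick used in Lemma \ref{1}; the paper's route is more elementary in that it needs nothing beyond the Laplace expansion. One small inaccuracy in your write-up: $(D)_{kl}D=\delta_{kl}D$ for $1\leq k,l\leq m$ is not literally established in the proof of Lemma \ref{1} (which treats $l\geq m+1$ and gets $Dy_{kl}$), but the identical alien-cofactor argument, or your own differentiated Cramer relation with $u=r=k$, gives it immediately, so nothing is missing.
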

\begin{proof}
First we show that $(D)_{kl}D=\delta_{kl}D$. Write $D=A_{k1}c_{1k}+\ldots +A_{km}c_{mk}$. Since $(-1)^{k+j}A_{kj}$
is the determinant of the matrix obtained by removing the $j$-th row and $k$th column from $C_{11}$, it follows that 
$A_{kj}$ is a sum of monomials in the variables $c_{rs}$ for $r\neq j$ and $s\neq k$. Therefore by the superderivation property of $_{kl}D$
we infer that $(A_{kj})_{kl}D=0$. Since $(c_{ik})_{kl}D=c_{il}$, we conclude that $(D)_{kl}D=A_{k1}c_{1l}+\ldots +A_{km}c_{ml}=\delta_{kl}D$.

Assume that $i=k$. Since $Dy_{kj}=A_{k1}c_{1j}+\ldots +A_{km}c_{mj}$, $(A_{ka})_{kl}D=0$ and $(c_{aj})_{kl}D=0$
for every $a=1, \ldots m$, using the superderivation property of $_{kl}D$ we derive $(Dy_{kj})_{kl}D=0$. 
Then $(Dy_{kj})_{kl}D=(D)_{kl}Dy_{kj}+D(y_{kj})_{kl}D$ implies that $(y_{kj})_{kl}D=-\delta_{kl}y_{kj}$.

Assume that $k\neq i$. Since $(-1)^{i+a}A_{ia}$ is the determinant of the matrix obtained by removing the $a$-th row and $i$-th column from $C_{11}$,
the application of the superderivation $_{kl}D$ on $(-1)^{i+a}A_{ia}$ has the same effect as replacing the $k$-th column of $C_{11}$ by the $l$-th column of $C_{11}$ prior to 
deleting $a$-th row and $i$-th column and computing the determinant. 
If $l\neq i$, then this determinant has two identical columns, and therefore, $(A_{ia})_{kl}D=0$. If $l=i$, then we obtain 
$(A_{ia})_{kl}D=-A_{ka}$. 
Thus $(Dy_{ij})_{kl}D=(A_{i1}c_{1j}+A_{i2}c_{2j}+\ldots +A_{im}c_{mj})_{kl}D=-\delta_{li}(A_{k1}c_{1j}+A_{k2}c_{2j}+\ldots +A_{ik}c_{mj})=
-\delta_{li}Dy_{kj}$ implies the last claim. 
\end{proof}

Now we compute the action of superderivations $_{kl}D$ on elements $\phi(c_{ij})$ when $m+1\leq i,j \leq m+n$.
The next statement is Lemma 2.2 of \cite{fm2}.

\begin{lm}
\label{2} If $1\leq k \leq m$ and $m+1\leq i,j,l \leq m+n$, then 
\begin{equation*}
(\phi(c_{ij}))_{kl}D=\phi(c_{il})y_{kj}.
\end{equation*}
\end{lm}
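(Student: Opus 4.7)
The plan is to exploit the explicit formula
\[
\phi(c_{ij}) = c_{ij} - \sum_{a=1}^{m} c_{ia}\,y_{aj}
\]
recorded earlier, apply the superderivation $_{kl}D$ termwise via the graded Leibniz rule, and reduce all pieces using Lemma \ref{1} together with the basic rule $(c_{rs})_{kl}D = \delta_{sk}c_{rl}$.

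First I would observe that $(c_{ij})_{kl}D = \delta_{jk}c_{il} = 0$, since $j \geq m+1$ while $k \leq m$. So only the sum $-\sum_a (c_{ia}y_{aj})_{kl}D$ survives. Here $_{kl}D$ has parity $|k|+|l| = 1$ and $y_{aj}$ has parity $|a|+|j| = 1$, so the graded Leibniz rule gives
\[
(c_{ia}y_{aj})_{kl}D = -(c_{ia})_{kl}D\cdot y_{aj} + c_{ia}\cdot (y_{aj})_{kl}D.
\]
Now $(c_{ia})_{kl}D = \delta_{ak}c_{il}$, and Lemma \ref{1} (which applies since $1\leq a,k\leq m$ and $m+1\leq j,l\leq m+n$) yields $(y_{aj})_{kl}D = y_{al}y_{kj}$. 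Substituting and summing over $a$, the delta collapses $\sum_a \delta_{ak}c_{il}y_{aj}$ to $c_{il}y_{kj}$, and the resulting expression factors as
\[
\Bigl(c_{il} - \sum_{a=1}^{m} c_{ia}y_{al}\Bigr)y_{kj} = \phi(c_{il})\,y_{kj},
\]
which is exactly the claimed identity.

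The only real subtlety, and thus the one place where mistakes are easy to make, is the sign $(-1)^{(|k|+|l|)|y_{aj}|} = -1$ in the Leibniz rule: the minus sign out front coming from $-\sum_a$ and the minus sign coming from the Koszul rule combine to give the positive $+c_{il}y_{kj}$ contribution, which is what makes the factorization through $\phi(c_{il})$ possible. Beyond this bookkeeping, the argument is entirely mechanical and no further input besides Lemma \ref{1} is needed.
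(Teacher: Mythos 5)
Your proof is correct and follows essentially the same route as the paper: expand $\phi(c_{ij})=c_{ij}-\sum_{a}c_{ia}y_{aj}$, apply the graded Leibniz rule together with Lemma \ref{1}, and recombine the surviving terms into $\phi(c_{il})y_{kj}$. Your explicit tracking of the Koszul sign $(-1)^{(|k|+|l|)|y_{aj}|}=-1$ is the only detail the paper leaves implicit, and you handle it correctly.
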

\begin{proof}
Since $\phi(c_{ij})=c_{ij}-c_{i1}y_{1j} - \ldots -c_{im}y_{mj}$, using Lemma %
\ref{1} we compute 
\begin{equation*}
(\phi(c_{ij}))_{kl}D=-c_{i1}y_{1,l}y_{kj} - c_{i2}y_{2l}y_{kj} -\ldots
+c_{il}y_{kj} - c_{ik}y_{kl}y_{kj} -\ldots -c_{im}y_{ml}y_{kj}=
\phi(c_{il})y_{kj}.
\end{equation*}
\end{proof}

\begin{lm}
\label{2''} 
If $1\leq k \leq m$ and $m+1\leq i,j,l \leq m+n$, then 
\begin{equation*}
(\phi(c_{ij}))_{lk}D=0.
\end{equation*}
\end{lm}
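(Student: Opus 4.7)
The plan is to expand $\phi(c_{ij})$ using its explicit formula and apply the superderivation $_{lk}D$ term by term, invoking the earlier lemma that computes $(y_{sj})_{lk}D$. The computation should collapse to zero after cancellation.

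Concretely, recall that for $m+1 \le i,j \le m+n$ we have
\[\phi(c_{ij}) = c_{ij} - \sum_{s=1}^{m} c_{is}\,y_{sj}.\]
Apply $_{lk}D$ to each piece. For the first piece, the definition $(c_{ab})_{lk}D = \delta_{bl}c_{ak}$ gives $(c_{ij})_{lk}D = \delta_{jl}\,c_{ik}$. For each term $c_{is}y_{sj}$ with $1\le s\le m$, I use the superderivation (Leibniz) rule. The parity of $_{lk}D$ is odd (since $|l|=1$, $|k|=0$) and $y_{sj}$ is odd (since $|s|=0$, $|j|=1$), so the sign picked up is $(-1)^{1\cdot 1} = -1$, yielding
\[(c_{is}y_{sj})_{lk}D = -(c_{is})_{lk}D\,y_{sj} + c_{is}(y_{sj})_{lk}D.\]
The first summand vanishes because $(c_{is})_{lk}D = \delta_{sl}c_{ik} = 0$ (as $s\le m < l$). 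For the second summand, Lemma \ref{1'} gives $(y_{sj})_{lk}D = \delta_{sk}\delta_{jl}$.

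Summing over $s$ from $1$ to $m$ yields $\sum_{s=1}^m c_{is}\,\delta_{sk}\delta_{jl} = \delta_{jl}\,c_{ik}$, so
\[(\phi(c_{ij}))_{lk}D = \delta_{jl}\,c_{ik} - \delta_{jl}\,c_{ik} = 0,\]
as claimed. There is no real obstacle here; the only point requiring mild care is tracking the sign in the Leibniz rule for the odd superderivation acting on the odd element $y_{sj}$, and using Lemma \ref{1'} (rather than Lemma \ref{1}) since the superderivation in question is $_{lk}D$ with the indices in the opposite order.
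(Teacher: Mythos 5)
Your proof is correct and follows exactly the paper's argument: expand $\phi(c_{ij})=c_{ij}-\sum_{s=1}^m c_{is}y_{sj}$, apply $_{lk}D$ with the Leibniz rule, and use Lemma \ref{1'} to get the cancellation $\delta_{jl}c_{ik}-\delta_{jl}c_{ik}=0$. The extra care you take with the sign and with $(c_{is})_{lk}D=0$ is correct but the paper simply records the same computation more tersely.
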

\begin{proof}
Using Lemma \ref{1'} we compute
$(\phi(c_{ij}))_{lk}D=(c_{ij}-c_{i1}y_{1j}-\ldots -c_{im}y_{mj})_{lk}D=\delta_{jl}c_{ik}-\delta_{jl}c_{ik}=0$.
\end{proof}

Although the map $\phi: K[G]\to K[G]$ is not a morphism of $G_{ev}$-modules, the following statement gives a useful property for a restriction of $\phi$.  

\begin{lm}
\label{2'} 
If $m+1\leq i,j,k,l \leq m+n$, then 
\begin{equation*}
(\phi(c_{ij}))_{kl}D=\delta_{jk}\phi(c_{il})=\phi((c_{ij})_{kl}D).
\end{equation*}
\end{lm}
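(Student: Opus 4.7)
The plan is to expand $\phi(c_{ij})$ in its explicit form $c_{ij}-\sum_{a=1}^{m}c_{ia}y_{aj}$ and apply the superderivation $_{kl}D$ termwise. Since both $k$ and $l$ lie in the odd range $m+1,\ldots,m+n$, we have $|k|+|l|\equiv 0\pmod 2$, so $_{kl}D$ is an \emph{even} superderivation and the ordinary Leibniz rule applies with no sign twist. This is the feature that makes the whole computation collapse cleanly.

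For the leading term, $(c_{ij})_{kl}D=\delta_{jk}c_{il}$ directly from the definition of the superderivation. For each summand $c_{ia}y_{aj}$ with $1\leq a\leq m$, the Leibniz rule gives
\[
(c_{ia}y_{aj})_{kl}D=(c_{ia})_{kl}D\cdot y_{aj}+c_{ia}\cdot(y_{aj})_{kl}D.
\]
The first term vanishes because $a\leq m<k$ forces $(c_{ia})_{kl}D=\delta_{ak}c_{il}=0$. For the second term, Lemma \ref{1''} applies verbatim (with the roles of its indices matched to $a,j,k,l$) and yields $(y_{aj})_{kl}D=\delta_{jk}y_{al}$.

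Combining the pieces,
\[
(\phi(c_{ij}))_{kl}D=\delta_{jk}c_{il}-\delta_{jk}\sum_{a=1}^{m}c_{ia}y_{al}=\delta_{jk}\phi(c_{il}),
\]
which is the first equality. The second equality $\delta_{jk}\phi(c_{il})=\phi((c_{ij})_{kl}D)$ is immediate from $(c_{ij})_{kl}D=\delta_{jk}c_{il}$ together with linearity of $\phi$.

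There is no real obstacle: the statement is essentially a bookkeeping exercise built on Lemma \ref{1''} and the vanishing of $(c_{ia})_{kl}D$ in the even index range. The only point that would derail a reader is the parity check at the very beginning — overlooking that $_{kl}D$ is even here could prompt spurious signs in the Leibniz expansion.
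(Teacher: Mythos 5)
Your proof is correct and follows essentially the same route as the paper: expand $\phi(c_{ij})=c_{ij}-\sum_{a=1}^m c_{ia}y_{aj}$, apply the even superderivation $_{kl}D$ termwise via Lemma \ref{1''}, and observe that the resulting expression is $\delta_{jk}\phi(c_{il})$. The paper simply states this "immediately," whereas you spell out the Leibniz expansion and parity check; no substantive difference.
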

\begin{proof}
Since $\phi(c_{ij})=c_{ij}-c_{i1}y_{1j}-\ldots -c_{im}y_{mj}$, we see immediately that 
$(\phi(c_{ij}))_{kl}D=\delta_{jk} \phi(c_{il})$.
On the other hand, $(c_{ij})_{kl}D=\delta_{jk} c_{il}$ and the claim follows. 
\end{proof}

\begin{lm}
\label{2'''} 
If $1\leq k,l\leq m$ and $m+1\leq i,j\leq m+n$, then $(\phi(c_{ij}))_{kl}D=0$.
\end{lm}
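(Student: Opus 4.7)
The plan is to expand $\phi(c_{ij})$ using its explicit formula $\phi(c_{ij}) = c_{ij} - \sum_{r=1}^{m} c_{ir}\,y_{rj}$, then apply the superderivation $_{kl}D$ term by term. Since $1 \le k, l \le m$, the derivation $_{kl}D$ has even parity ($|k|+|l| \equiv 0$), so the Leibniz rule carries no sign: $(ab)_{kl}D = (a)_{kl}D \cdot b + a \cdot (b)_{kl}D$. This keeps the bookkeeping clean.

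First I would handle the easy piece: $(c_{ij})_{kl}D = \delta_{jk}\,c_{il}$, which vanishes automatically because $j \ge m+1 > m \ge k$. Next, for each summand $c_{ir}\,y_{rj}$ with $1 \le r \le m$, the Leibniz rule gives
\begin{equation*}
(c_{ir}\,y_{rj})_{kl}D \;=\; (c_{ir})_{kl}D \cdot y_{rj} \;+\; c_{ir}\cdot(y_{rj})_{kl}D \;=\; \delta_{rk}\,c_{il}\,y_{rj} \;-\; \delta_{lr}\,c_{ir}\,y_{kj},
\end{equation*}
where the first contribution uses the definition $(c_{ir})_{kl}D=\delta_{rk}c_{il}$ and the second uses Lemma \ref{1'''}, which asserts precisely that $(y_{rj})_{kl}D = -\delta_{lr}\,y_{kj}$ when all three indices $r,k,l$ lie in $\{1,\dots,m\}$ and $j\ge m+1$.

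Summing over $r=1,\dots,m$ collapses each Kronecker delta: $\sum_r \delta_{rk}\,c_{il}\,y_{rj} = c_{il}\,y_{kj}$ and $\sum_r \delta_{lr}\,c_{ir}\,y_{kj} = c_{il}\,y_{kj}$. These two contributions are identical and cancel when combined with the sign from the subtraction in $\phi(c_{ij})$. Putting everything together,
\begin{equation*}
(\phi(c_{ij}))_{kl}D \;=\; 0 \;-\; \Bigl(c_{il}\,y_{kj} - c_{il}\,y_{kj}\Bigr) \;=\; 0,
\end{equation*}
which is the claim.

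There is no real obstacle here; the result is essentially a verification that the formula for $\phi(c_{ij})$ was designed precisely so that even derivations $_{kl}D$ (with both indices in the ``upper'' block) annihilate it. The only thing to be careful about is applying Lemma \ref{1'''} with the correct sign and making sure the two cancelling terms are indexed consistently.
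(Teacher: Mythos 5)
Your proof is correct and follows essentially the same route as the paper: expand $\phi(c_{ij})=c_{ij}-\sum_{r=1}^{m}c_{ir}y_{rj}$, apply the (even, signless) Leibniz rule, and use Lemma \ref{1'''} together with $(c_{ir})_{kl}D=\delta_{rk}c_{il}$ to see that the two surviving terms $c_{il}y_{kj}$ cancel. The paper's version is just a terser statement of the same cancellation.
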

\begin{proof}
We have $(y_{lj})_{kl}D=-y_{kj}$ and $(y_{aj})_{kl}D=0$ for $a\neq l$ by Lemma \ref{1'''}.
Since  $\phi(c_{ij})=c_{ij}-c_{i1}y_{1j}-\ldots -c_{im}y_{mj}$, we infer that $(\phi(c_{ij}))_{kl}D=-c_{il}y_{kj}+c_{il}y_{kj}=0$.
\end{proof}

\subsection{Action of superderivations $_{kl}D$ on determinants $D^+(i_1, \ldots, i_s)$}

We need the following property of certain determinants.

\begin{lm}
\label{new} Let $0\leq j <m$, $1\leq k_1, \ldots k_j \leq m$ and $m+1\leq l\leq m+n$. Then 
\begin{equation*}
C(k_1, \ldots, k_j, l)
=\sum_{a=1}^m C(k_1,\ldots, k_j, a)y_{a,l}
\end{equation*}
\end{lm}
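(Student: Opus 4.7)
The plan is to prove the lemma by first establishing the auxiliary cofactor identity advertised in the introduction, and then recognizing the resulting sum as a Laplace expansion.

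First I would substitute the definition
\[
y_{a,l}=\frac{1}{D}\sum_{s=1}^{m} A_{a,s}\,c_{s,l}
\]
into the right-hand side and swap the order of summation to get
\[
\sum_{a=1}^{m} C(k_{1},\ldots,k_{j},a)\,y_{a,l}
=\frac{1}{D}\sum_{s=1}^{m} c_{s,l}\Bigl(\sum_{a=1}^{m} C(k_{1},\ldots,k_{j},a)\,A_{a,s}\Bigr).
\]
The whole problem now reduces to evaluating the inner sum $S_{s}:=\sum_{a} C(1,\ldots,j{+}1\mid k_{1},\ldots,k_{j},a)\,A_{a,s}$.

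To compute $S_{s}$, I would Laplace-expand the minor $C(1,\ldots,j{+}1\mid k_{1},\ldots,k_{j},a)$ along its last column (which is the column of entries $c_{1,a},\ldots,c_{j+1,a}$):
\[
C(1,\ldots,j{+}1\mid k_{1},\ldots,k_{j},a)=\sum_{r=1}^{j+1}(-1)^{r+j+1}c_{r,a}\,C(1,\ldots,\hat{r},\ldots,j{+}1\mid k_{1},\ldots,k_{j}).
\]
Plugging this into $S_{s}$ and interchanging sums yields
\[
S_{s}=\sum_{r=1}^{j+1}(-1)^{r+j+1}C(1,\ldots,\hat{r},\ldots,j{+}1\mid k_{1},\ldots,k_{j})\Bigl(\sum_{a=1}^{m} c_{r,a}A_{a,s}\Bigr).
\]
Now I invoke the classical cofactor relation $C_{11}\cdot A = D\cdot E_{m}$, i.e.\ $\sum_{a} c_{r,a}A_{a,s}=D\,\delta_{r,s}$. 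This collapses the $r$-sum: when $s>j+1$ every term vanishes, and when $s\le j+1$ only $r=s$ survives, giving
\[
S_{s}=\begin{cases}(-1)^{s+j+1}\,D\,C(1,\ldots,\hat{s},\ldots,j{+}1\mid k_{1},\ldots,k_{j}) & \text{if } s\le j+1,\\ 0 & \text{if } s>j+1,\end{cases}
\]
which is precisely the identity quoted in the introduction.

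Substituting $S_{s}$ back into the expression for the right-hand side, the factor $D$ cancels and one obtains
\[
\sum_{a=1}^{m} C(k_{1},\ldots,k_{j},a)\,y_{a,l}
=\sum_{s=1}^{j+1}(-1)^{s+j+1}c_{s,l}\,C(1,\ldots,\hat{s},\ldots,j{+}1\mid k_{1},\ldots,k_{j}).
\]
The final step is to recognize the right-hand side as the Laplace expansion along the last column of the $(j{+}1)\times(j{+}1)$ minor $C(1,\ldots,j{+}1\mid k_{1},\ldots,k_{j},l)=C(k_{1},\ldots,k_{j},l)$, completing the proof. The main (and only real) obstacle is bookkeeping the signs in the column expansion and matching them against $s+j+1$; since the added column sits in position $j+1$, the cofactor sign is $(-1)^{s+(j+1)}$, which is consistent throughout.
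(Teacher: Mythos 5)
Your proof is correct, and it reaches the key cofactor identity by a genuinely different and more elementary route than the paper. Both arguments ultimately rest on the Laplace expansion of $C(k_1,\ldots,k_j,l)$ along its last column and on the identity
\[
\sum_{a=1}^{m} C(k_1,\ldots,k_j,a)\,A_{a,s}=(-1)^{s+j+1}\,C(1,\ldots,\hat{s},\ldots,j+1\mid k_1,\ldots,k_j)\,D
\]
for $s\le j+1$ (and $0$ for $s>j+1$); the difference is in how this identity is established. The paper invokes Muir's law of extensible minors: it Laplace-expands an $(m-j)\times(m-j)$ complementary minor, adjoins the symbols $(1,\ldots,\hat{s},\ldots,j+1\mid k_1,\ldots,k_j)$, and then tracks the signs of the permutations $\tau$ and $\tau_b$ to convert complementary minors into adjugate entries $A_{l_b s}$. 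You instead expand each $C(1,\ldots,j+1\mid k_1,\ldots,k_j,a)$ along its column of entries $c_{r,a}$, interchange the two finite sums, and collapse the inner sum via the defining adjugate relation $\sum_{a}c_{r,a}A_{a,s}=D\,\delta_{rs}$ (i.e.\ $C_{11}A=DE_m$, consistent with the paper's normalization $C_{11}^{-1}=\tfrac{1}{D}A$). Your route avoids all of the permutation-sign bookkeeping, uses nothing beyond the classical Laplace expansion and the adjugate identity, and works verbatim for arbitrary (possibly repeated, unordered) $k_1,\ldots,k_j$, whereas the paper first reduces to $k_1<\cdots<k_j$. What the paper's approach buys is chiefly expository: it showcases the extensible-minors technique announced in the introduction as a recurring tool. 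Mathematically your argument is a clean simplification, and the supercommutativity issues are harmless since exactly one factor in each product (the entry $c_{s,l}$) is odd.
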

\begin{proof}
Without a loss of generality we can assume $k_1<\ldots < k_i< \ldots < k_j$.

Use the Laplace expansion along the $(j+1)$-th column for the minor $C(1, \ldots, j+1|k_1, \ldots, k_j,l)$
of the matrix $C_{11}$  to get 
\begin{equation*}
C(k_1, \ldots, k_j, l)= \sum_{s=1}^{j+1} (-1)^{s+j+1} 
C(1, \ldots, \hat{s}, \ldots, j+1|k_1, \ldots, k_j) c_{sl}.
\end{equation*}

The right-hand side of the last formula can be expressed as
\begin{equation*}
\sum_{s=1}^m \sum_{a=1}^{m} C(k_1,\ldots, k_j, a) A_{a,s} \frac{c_{sl}}{D}.
\end{equation*}
provided we show that 
\begin{equation*}
\sum_{a=1}^{m} C(k_1,\ldots, k_j, a) A_{a,s}= (-1)^{s+j+1}C(1,\ldots, \hat{s}, \ldots, j+1|k_1, \ldots, k_j)D
\end{equation*}
if $s\leq j+1$, and that $\sum_{a=1}^{m} C(k_1,\ldots, k_j, a) A_{a,s}=0$ otherwise.

These equalities are obtained by using the method of extension (Muir's law
of extensible minors) of determinantal identities - see Sections 7 an 8 of 
\cite{bs}. Assume $s\leq j+1$. Denote by 
$l_1< \ldots< l_{m-j}$ the listing of elements of $\{1, \ldots, m\}\setminus
\{k_1, \ldots, k_j\}$. 

The Laplace expansion of $C(s,j+2, \ldots,
m|l_1,\ldots,l_{m-j})$ along the first row gives 
\begin{equation*}
C(\emptyset|\emptyset)C(s,j+2, \ldots, m|l_1,\ldots,l_{m-j})=
\end{equation*}
\begin{equation*}
\sum_{b=1}^{m-j} (-1)^{b+1} C(s|l_b)C(j+2, \ldots, m|l_1, \ldots, \hat{l_b},\ldots, l_{m-j}).
\end{equation*}

By adjoining the symbols $(1, \ldots, \hat{s}, \ldots, j+1|k_1, \ldots, k_j)$ we obtain 
\begin{equation*}
C(1, \ldots, \hat{s}, \ldots, j+1|k_1, \ldots, k_j)
C(1, \ldots, \hat{s}, \ldots, j+1, s,j+2, \ldots, m|k_1, \ldots, k_j,l_1,\ldots,l_{m-j})=
\end{equation*}

\begin{equation*}
\sum_{b=1}^{m-j} (-1)^{b+1} C(1, \ldots, \hat{s}, \ldots, j+1,s|k_1, \ldots, k_j,l_b) \times
\end{equation*}
\begin{equation*}
C(1, \ldots, \hat{s}, \ldots m|k_1, \ldots, k_j,l_1, \ldots, \hat{l_b},\ldots, l_{m-j}).
\end{equation*}

Denote by $\tau$ the permutation $(k_1, \ldots, k_j,l_1,\ldots,l_{m-j})$ of the set $\{1, \ldots, m\}$.
and by $\tau_b$ the permutation $(k_1, \ldots, k_j,l_1,\ldots, \hat{l_b}, \ldots,l_{m-j})$ of the set $\{1, \ldots,\hat{l_b}, \ldots  m\}$.
Then $(-1)^{|\tau_b|}=(-1)^{|\tau|}(-1)^{j+b+l_b}$. 

Since  \[C(1, \ldots, \hat{s}, \ldots, j+1, s,j+2, \ldots, m|k_1, \ldots, k_j,l_1,\ldots,l_{m-j})=(-1)^{s+j+1}(-1)^{|\tau|} D\] and 
\[C(1, \ldots, \hat{s}, \ldots, j+1,s|k_1, \ldots, k_j,l_b)=(-1)^{s+j+1} C(k_1,\ldots, k_j,l_b),\] 
the previous equation implies 
\begin{equation*}
(-1)^{s+j+1}(-1)^{|\tau|}C(1, \ldots, \hat{s}, \ldots, j+1|k_1, \ldots, k_j)D=
\end{equation*}
\begin{equation*}
=\sum_{b=1}^{m-j}(-1)^{s+j+1} (-1)^{b+1}  C(k_1, \ldots, k_j,l_b) 
C(1,\ldots, \hat{s},\ldots, m|k_1, \ldots, k_j,l_1, \ldots, \hat{l_b},\ldots, l_{m-j})
\end{equation*}
\begin{equation*}
=\sum_{b=1}^{m-j} (-1)^{s+j+1}(-1)^{|\tau|}(-1)^{j+1+l_b}  C(k_1, \ldots, k_j,l_b) 
C(1,\ldots, \hat{s},\ldots, m|1, \ldots,\hat{l_b},\ldots, m)
\end{equation*}
\begin{equation*}
=(-1)^{|\tau|}(-1)^{s+j+1}\sum_{b=1}^{m-j} (-1)^{j+1+l_b}  C(k_1, \ldots, k_j,l_b) (-1)^{s+l_b}A_{l_bs}
\end{equation*}
and 
\begin{equation*}
C(1, \ldots, \hat{s}, \ldots, j+1|k_1, \ldots, k_j)D
=(-1)^{j+1+s}\sum_{b=1}^{m-j} C(k_1, \ldots, k_j,l_b) A_{l_bs}.
\end{equation*}

If $s>j+1$, then we can use again the Laplace expansion of $C(s,j+2, \ldots,
m|l_1,\ldots,l_{m-j})$ along the first row. However, in this case 
$C(s,j+2,\ldots, m|l_1,\ldots,l_{m-j})=0$ and 
$\sum_{a=1}^{m} C(k_1, \ldots, k_j,a) A_{as}=\sum_{b=1}^{m-j} C(k_1, \ldots, k_j,l_b) A_{l_bs}=0$.
\end{proof}

The action of $_{kl}D$ on determinants $D^+(i_1, \ldots, i_s)$ is given in the following statements.

\begin{lm}
\label{4} Let $1\leq i_1< \ldots < i_s \leq m$, $1\leq k\leq m$ and $m+1\leq l\leq m+n$. 
If $i_t=k$ for some $1\leq t\leq s$, then 
\begin{equation*}
D^+(i_1, \ldots, i_s) _{kl}D=\sum_{a=1}^m D^+(i_1,\ldots, i_{t-1}, a, i_{t+1}, \ldots, i_s)y_{a,l}.
\end{equation*}
If none of the $i_1, \ldots, i_s$ equals $k$, then $D^+(i_1, \ldots, i_s) _{kl}D=0$.
\end{lm}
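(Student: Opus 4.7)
The plan is to compute $(D^+(i_1, \ldots, i_s))_{kl}D$ directly from the Leibniz rule for the right superderivation $_{kl}D$ and then invoke Lemma \ref{new} to rewrite the result in the required form.

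First I would recall that $(c_{r,j})_{kl}D = \delta_{jk}\, c_{r,l}$, so $_{kl}D$ annihilates every generator $c_{r,j}$ with $j \neq k$. Since each monomial in the expansion $D^+(i_1, \ldots, i_s) = \sum_{\sigma \in S_s} \operatorname{sgn}(\sigma)\, c_{\sigma(1), i_1} \cdots c_{\sigma(s), i_s}$ involves only the columns $i_1, \ldots, i_s$, the Leibniz rule forces $(D^+(i_1, \ldots, i_s))_{kl}D = 0$ whenever none of $i_1, \ldots, i_s$ equals $k$, proving the second assertion. For the first, suppose $i_t = k$ (uniquely determined since $i_1 < \ldots < i_s$). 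Then in each monomial only the $t$-th factor $c_{\sigma(t),k}$ contributes, and it is replaced by $c_{\sigma(t), l}$. Every factor in each monomial is even (all column indices are at most $m$), so the Leibniz rule introduces no extra signs, and the outcome is the $s \times s$ ``determinant'' of the matrix obtained from the original by substituting the odd column $(c_{r,l})_{r=1}^{s}$ for the $t$-th column.

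Next I would shift this odd column to the last position in order to match the format of Lemma \ref{new}. Since it is the only odd column, commuting it past the remaining even columns incurs no supercommutativity sign, and the rearrangement contributes just the permutation sign $(-1)^{s-t}$. Applying Lemma \ref{new} with $j = s-1$ and $(k_1, \ldots, k_{s-1}) = (i_1, \ldots, i_{t-1}, i_{t+1}, \ldots, i_s)$ expresses the resulting minor as $\sum_{a=1}^{m} D^+(i_1, \ldots, i_{t-1}, i_{t+1}, \ldots, i_s, a)\, y_{a,l}$. Finally, transporting the new column $a$ back to position $t$ inside each $D^+$ (which now involves only even entries) yields a second sign $(-1)^{s-t}$, which cancels the first and produces the claimed formula.

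The main obstacle is sign bookkeeping under supercommutativity: one must verify that the single odd column $(c_{r,l})_{r}$ supercommutes freely past the even columns without contributing extra signs, and that the two $(-1)^{s-t}$ factors coming from the two column permutations indeed cancel. Once these sign issues are settled, the lemma follows as a direct corollary of Lemma \ref{new}.
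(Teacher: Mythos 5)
Your proof is correct and follows the same route as the paper: the paper's one-line argument is that $D^+(i_1,\ldots,i_s)\,_{kl}D=C(i_1,\ldots,i_{t-1},l,i_{t+1},\ldots,i_s)$ and the claim then follows from Lemma \ref{new}, which is exactly your computation. You merely make explicit the sign bookkeeping (the two cancelling factors $(-1)^{s-t}$ from moving the single odd column to the end and back) that the paper leaves implicit.
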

\begin{proof}
If $i_t=k$, then $D^+(i_1,\ldots, i_s) _{kl}D=C(i_1, \ldots, i_{t-1}, l, i_{t+1}, \ldots, i_s)$ and the claim follows from Lemma \ref{new}. The other case is obvious.
\end{proof}

\begin{lm} \label{4'}
Assume $1\leq i_1< \ldots < i_s \leq m$. If $1\leq k,l \leq m$ and $k=i_j$ for some $j=1, \ldots, s$, then $D^+(i_1,\ldots, i_s)_{kl}D=D^+(i_1, \ldots, i_{j-1}, l, i_{j+1}, \ldots, i_s)$. If none of the $i_j$ equals $k$, then $D^+(i_1,\ldots, i_s)_{kl}D=0$.
If $m+1\leq k\leq m+n$, then $D^+(i_1,\ldots, i_s)_{kl}D=0$.
\end{lm}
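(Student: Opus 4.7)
The plan is to regard $D^+(i_1, \ldots, i_s)$ as a determinant whose $(a,b)$-entry is the even variable $c_{a, i_b}$, and then apply the superderivation $_{kl}D$ via the (super-)Leibniz rule, using the basic identity $(c_{ab})_{kl}D = \delta_{bk} c_{al}$.

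For the case $1 \leq k, l \leq m$, the superderivation $_{kl}D$ is even, so it behaves as an ordinary derivation. I would expand its action on the determinant as the usual column-by-column sum, noting that differentiating the $b$-th column replaces each entry $c_{a, i_b}$ by $\delta_{i_b, k} c_{a, l}$. Since the $i_j$ are pairwise distinct, at most one value of $b$ contributes, and one reads off the result directly: if $k = i_j$, the $j$-th column becomes $(c_{1,l}, \ldots, c_{s,l})$ and all other columns contribute zero, yielding $D^+(i_1, \ldots, i_{j-1}, l, i_{j+1}, \ldots, i_s)$; if no $i_j$ equals $k$, every column contribution is zero.

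For the case $m+1 \leq k \leq m+n$, the key observation is that every entry $c_{a, i_b}$ of the determinant satisfies $i_b \leq m < k$, hence $(c_{a, i_b})_{kl}D = \delta_{i_b, k} c_{a, l} = 0$. Whether $_{kl}D$ is even or odd (depending on the parity of $l$), the super-Leibniz rule writes its action on the determinant as a sum of products each containing at least one differentiated entry, so every such term vanishes, giving $D^+(i_1, \ldots, i_s)_{kl}D = 0$. The only place one could worry is in this odd subcase, where the parity-dependent signs in super-Leibniz might seem to matter; however, because every single-entry derivative is already zero and because all entries of $D^+$ are themselves even (parity $|a|+|i_b|=0$), the signs are irrelevant and no genuine obstacle arises. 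The whole argument is essentially a direct computation.
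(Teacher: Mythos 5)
Your proposal is correct and is exactly the direct computation the paper has in mind (its own proof is just ``The proof is obvious''): expand the determinant, apply the (super-)Leibniz rule, and use $(c_{ab})_{kl}D=\delta_{bk}c_{al}$ together with the distinctness of the $i_j$. Your remark that the parity signs are harmless in the case $m+1\leq k\leq m+n$ because every single-entry derivative already vanishes is accurate and completes the argument.
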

\begin{proof}
The proof is obvious.
\end{proof}

\begin{lm} \label{4''}
Assume $1\leq i_1< \ldots < i_s \leq m$. If $m+1\leq k \leq m+n$ and $1\leq l\leq m+n$, then $D^+(i_1,\ldots, i_s)_{kl}D=0$.
\end{lm}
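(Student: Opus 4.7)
The plan is essentially a one-line observation, so the write-up will be short. The determinant $D^+(i_1,\ldots,i_s)$ is by definition a polynomial in the generators $c_{a,i_b}$ with $1\leq a\leq s\leq m$ and $1\leq i_b\leq m$. All such generators carry a second index in the range $\{1,\ldots,m\}$. Recall that the superderivation $_{kl}D$ acts on generators by $(c_{pq})_{kl}D=\delta_{qk}c_{pl}$, and since $k\geq m+1$ while every second index appearing in $D^+(i_1,\ldots,i_s)$ is $\leq m$, every such Kronecker delta vanishes. Hence $_{kl}D$ annihilates each generator occurring in $D^+(i_1,\ldots,i_s)$.

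Now apply the superderivation property (the super Leibniz rule recalled in the definition of $_{ij}D$): if $_{kl}D$ kills each factor in a monomial, then it kills the monomial, and by linearity it kills the whole polynomial. Expanding $D^+(i_1,\ldots,i_s)$ as an alternating sum of products of the entries $c_{a,i_b}$ and applying this observation to every term yields $D^+(i_1,\ldots,i_s)_{kl}D=0$, which is the claim.

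There is no real obstacle here; the statement is immediate from the parity constraint on the indices of the generators appearing in $D^+$ together with the definition of $_{kl}D$ on generators. This is presumably why the authors write ``The proof is obvious.''
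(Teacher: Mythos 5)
Your argument is correct and is exactly the observation behind the paper's ``The proof is obvious'': every generator $c_{a,i_b}$ occurring in $D^+(i_1,\ldots,i_s)$ has second index $i_b\leq m$, so $(c_{a,i_b})_{kl}D=\delta_{i_b,k}c_{a,l}=0$ for $k\geq m+1$, and the super Leibniz rule kills the whole determinant. Nothing further is needed.
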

\begin{proof}
The proof is obvious.
\end{proof}

\subsection{Action of superderivations $_{kl}D$ on determinants $D^-(j_1, \ldots, j_s)$}

The following statement is Lemma 2.3 of \cite{fm2}.

\begin{lm}
\label{3} If $1\leq k \leq m$ and $m+1\leq l,j_1, \ldots, j_s \leq m+n$,
then 
\begin{equation*}
(D^-(j_1, \ldots, j_s))_{kl}D=D^-(l,j_2, \ldots, j_s) y_{kj_1}+D^-(j_1,l,
\ldots, j_s)y_{kj_2}+ \ldots + D^-(j_1, \ldots, j_{s-1},l)y_{kj_s}.
\end{equation*}
\end{lm}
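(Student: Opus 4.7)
The plan is to apply the super-Leibniz rule for the odd superderivation $_{kl}D$ directly to the determinantal expansion, and then invoke Lemma \ref{2} to differentiate each individual matrix entry. A parity check shows that $_{kl}D$ has parity $|k|+|l|=0+1=1$, hence is odd, while each entry $\phi(c_{m+a, j_b})$ of the matrix defining $D^-(j_1,\ldots,j_s)$ has parity $|m+a|+|j_b|=1+1\equiv 0$, hence is even. Consequently the super-Leibniz rule reduces to the ordinary Leibniz rule when applied to products of such entries, and we can treat the situation much like a classical derivation on a classical determinant.

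First I would expand $D^-(j_1,\ldots,j_s)$ as the standard signed sum over the symmetric group on $s$ letters of products of the form $\prod_{t=1}^s \phi(c_{m+t, j_{\sigma(t)}})$. Applying $_{kl}D$ to each such product term by term and using Lemma \ref{2}, the derivative of the $t$-th factor equals $\phi(c_{m+t, l}) y_{k, j_{\sigma(t)}}$. The element $y_{k, j_{\sigma(t)}}$ is odd, but all remaining factors $\phi(c_{m+b, j_{\sigma(b)}})$ in the product are even, so $y_{k, j_{\sigma(t)}}$ can be transported past them to the extreme right of the product without picking up a sign.

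Finally I would regroup the resulting double sum by indexing over $r=\sigma(t)$ rather than by $(\sigma,t)$: for each fixed $r\in\{1,\ldots,s\}$, the factor $y_{k,j_r}$ is common, and the remaining signed sum over those permutations $\sigma$ with $\sigma(t)=r$ for some $t$ reproduces exactly the determinantal expansion of $D^-(j_1,\ldots,j_{r-1},l,j_{r+1},\ldots,j_s)$. Summing over $r$ then yields the claimed identity. The only potentially delicate point is the sign bookkeeping when the odd element $y_{k,j_{\sigma(t)}}$ migrates out of the middle of the product, but since all other factors are even, no nontrivial signs appear, and the identity reduces to routine verification.
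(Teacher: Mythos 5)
Your proof is correct and follows essentially the same route as the paper, which simply notes that the identity follows from Lemma \ref{2} applied entrywise to the determinantal expansion; your parity check (all entries $\phi(c_{m+a,j_b})$ even, so the super-Leibniz rule degenerates to the ordinary one and the odd factor $y_{k,j_{\sigma(t)}}$ moves to the right without signs) is exactly the "straightforward" verification the paper leaves to the reader.
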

\begin{proof}
Using Lemma \ref{2}, the proof is straightforward.
\end{proof}

\begin{lm}
\label{3''} If $1\leq k \leq m$ and $m+1\leq l,j_1, \ldots, j_s \leq m+n$,
then $(D^-(j_1, \ldots, j_s))_{lk}D=0$.
\end{lm}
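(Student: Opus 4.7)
The plan is to reduce the statement directly to Lemma \ref{2''} via the (right) superderivation property of $_{lk}D$. The key observation is that every entry $\phi(c_{m+a,j_b})$ of the matrix whose determinant defines $D^-(j_1,\ldots,j_s)$ lies in the range of indices where Lemma \ref{2''} applies: namely $m+1\leq m+a,j_b \leq m+n$ (with $1\leq k\leq m$ and $m+1\leq l\leq m+n$), so $(\phi(c_{m+a,j_b}))_{lk}D=0$.

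Since each entry $\phi(c_{m+a,j_b})$ has parity $|m+a|+|j_b|\equiv 0\pmod 2$, i.e.\ is even, the determinant $D^-(j_1,\ldots,j_s)$ is an ordinary (commutative) polynomial in these even entries. Expanding it in the usual way,
\[
D^-(j_1,\ldots,j_s)=\sum_{\sigma\in \mathfrak{S}_s}\mathrm{sgn}(\sigma)\prod_{a=1}^{s}\phi(c_{m+a,j_{\sigma(a)}}),
\]
one can apply the Leibniz rule for $_{lk}D$ to each monomial. Each resulting term contains a factor of the form $(\phi(c_{m+a,j_{\sigma(a)}}))_{lk}D$, which vanishes by Lemma \ref{2''}.

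Hence every term in the expansion of $(D^-(j_1,\ldots,j_s))_{lk}D$ is zero, and the claim follows. There is no genuine obstacle here: the only thing to verify is that the hypothesis of Lemma \ref{2''} applies uniformly to all matrix entries, which it does by construction, and that the parity bookkeeping is trivial because the entries are even (so no subtle sign issues from the odd superderivation arise when applying Leibniz).
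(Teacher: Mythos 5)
Your proof is correct and is essentially the paper's own argument: the paper simply states that the claim ``follows immediately from Lemma \ref{2''}'', and your write-up supplies the routine details (the permutation expansion of the determinant, the Leibniz rule, and the observation that the entries are even so no signs interfere). Nothing further is needed.
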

\begin{proof}
It follows immediately from Lemma \ref{2''}.
\end{proof}

\begin{lm}
\label{3'} If $m+1\leq k,l,j_1, \ldots, j_s \leq m+n$,
then $(D^-(j_1, \ldots, j_s))_{kl}D=0$ if none of the $j_i$'s equals to $k$. 
If $j_i=k$ for some $i$, then 
$(D^-(j_1, \ldots, j_s))_{kl}D=D^-(j_1, \ldots,j_{i-1}, l, j_{i+1}, \ldots j_s)$.
\end{lm}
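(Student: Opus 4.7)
The plan is to mimic the proof of Lemma \ref{3}, but using Lemma \ref{2'} in place of Lemma \ref{2}. The key observation is that when both $k$ and $l$ lie in the range $\{m+1,\ldots,m+n\}$, the parity $|k|+|l|$ is even, so $_{kl}D$ is an \emph{even} superderivation. Moreover, every entry $\phi(c_{m+a,j_b})$ of the $s\times s$ matrix defining $D^-(j_1,\ldots,j_s)$ is even (its parity is $|m+a|+|j_b|\equiv 0\pmod 2$), so the standard Leibniz rule applies without sign complications, and we may use the familiar fact that a derivation acts on a determinant column by column.

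First I would expand $D^-(j_1,\ldots,j_s)$ as the alternating sum
\[
D^-(j_1,\ldots,j_s)=\sum_{\sigma\in S_s}\mathrm{sgn}(\sigma)\prod_{b=1}^{s}\phi(c_{m+\sigma(b),j_b}),
\]
and apply $_{kl}D$ term-by-term. By Lemma \ref{2'}, each factor transforms as $(\phi(c_{m+\sigma(b),j_b}))_{kl}D=\delta_{j_b,k}\,\phi(c_{m+\sigma(b),l})$, so the derivation of the $b$-th column is $\delta_{j_b,k}$ times the column of entries $\phi(c_{m+a,l})$. Repackaging the sum back as a determinant, the contribution from column $b$ is precisely $\delta_{j_b,k}\,D^-(j_1,\ldots,j_{b-1},l,j_{b+1},\ldots,j_s)$.

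Summing over $b$ gives
\[
(D^-(j_1,\ldots,j_s))_{kl}D=\sum_{b=1}^{s}\delta_{j_b,k}\,D^-(j_1,\ldots,j_{b-1},l,j_{b+1},\ldots,j_s).
\]
If no $j_b$ equals $k$, every Kronecker delta vanishes and we obtain zero. If $j_i=k$ for a unique $i$, only the $b=i$ term survives, yielding the stated formula. The one subtlety worth mentioning is the case when $k$ appears among the $j_b$'s more than once: then $D^-(j_1,\ldots,j_s)$ is already zero (two identical columns), and correspondingly the surviving terms in the sum pair up into expressions of the form $D^-(\ldots,l,\ldots,k,\ldots)$ and $D^-(\ldots,k,\ldots,l,\ldots)$ which differ by a column swap and hence cancel, so the formula remains consistent.

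I do not anticipate any real obstacle: Lemma \ref{2'} does all the heavy lifting, and the evenness of $_{kl}D$ sidesteps any sign bookkeeping that would otherwise accompany a determinant expansion in the super setting. The proof is essentially one line invoking Lemma \ref{2'}, paralleling how Lemma \ref{3} invokes Lemma \ref{2}.
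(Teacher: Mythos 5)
Your proof is correct and follows the same route as the paper, which simply derives the lemma from Lemma \ref{2'}; the evenness of $_{kl}D$ and of the entries $\phi(c_{m+a,j_b})$ indeed makes the column-by-column derivation of the determinant work without sign issues. Your extra remark about repeated occurrences of $k$ among the $j_b$'s is a nice consistency check, though not needed for the statement as given.
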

\begin{proof}
It follows from Lemma \ref{2'}.
\end{proof}

In other words, the formulas for the action of $_{kl}D$ on $D^-(j_1, \ldots, j_s)$ for $m+1\leq k,l, j_1, \ldots, j_s \leq m+n$ 
and the action of $_{kl}D$ on $D^+(j_1, \ldots, j_s)$ for $1\leq k,l, j_1, \ldots, j_s \leq m$ are identical.

\begin{lm}
\label{3'''} Let $1\leq k,l\leq m$, where $k\neq l$, and $m+1\leq j_1, \ldots, j_s \leq m+n$. 
Then $(D^-(j_1, \ldots, j_s))_{kl}D=0$.
\end{lm}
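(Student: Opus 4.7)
The approach is essentially a one-line reduction to Lemma \ref{2'''}. The determinant $D^-(j_1, \ldots, j_s)$ is by definition the determinant of the $s \times s$ matrix whose $(a,b)$ entry is $\phi(c_{m+a, j_b})$. Since each such entry has parity $|m+a| + |j_b| \equiv 1+1 \equiv 0 \pmod 2$, every entry is even, and expanding the determinant as the usual alternating sum of products exhibits $D^-(j_1,\ldots,j_s)$ as a polynomial in these even elements.

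First, I would observe that under the hypotheses $1 \leq k,l \leq m$, the superderivation $_{kl}D$ has parity $|k|+|l| = 0$, so it is an even derivation and its Leibniz rule on a product of even elements carries no signs. Next, Lemma \ref{2'''} applied with $i = m+a$ and $j = j_b$ gives $(\phi(c_{m+a, j_b}))_{kl}D = 0$ for all $a \in \{1,\ldots,s\}$ and $b \in \{1,\ldots,s\}$, precisely because the indices satisfy $1 \leq k,l \leq m$ and $m+1 \leq m+a, j_b \leq m+n$.

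Finally, applying the Leibniz rule to the expansion $D^-(j_1,\ldots,j_s) = \sum_{\sigma \in S_s} \mathrm{sgn}(\sigma) \prod_{a=1}^s \phi(c_{m+a, j_{\sigma(a)}})$ produces a sum whose terms each contain exactly one factor of the form $(\phi(c_{m+b, j_{\sigma(b)}}))_{kl}D = 0$. Hence $(D^-(j_1, \ldots, j_s))_{kl}D = 0$, which is what we wanted. There is no real obstacle here; the hypothesis $k \neq l$ is not even used in the argument (Lemma \ref{2'''} covers the $k=l$ case as well), so the only care needed is to verify that Lemma \ref{2'''} indeed applies to every entry of the displayed determinant, which it does.
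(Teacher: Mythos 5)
Your proposal is correct and is essentially the paper's own proof, which simply says the lemma ``follows from Lemma \ref{2'''}'': you have just spelled out the reduction by expanding the determinant into products of the even entries $\phi(c_{m+a,j_b})$ and applying the (sign-free) Leibniz rule for the even superderivation $_{kl}D$. Your side remark that the hypothesis $k\neq l$ is not needed is also accurate, since Lemma \ref{2'''} covers $k=l$ as well.
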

\begin{proof}
It follows from Lemma \ref{2'''}.
\end{proof}

\subsection{Action of odd superderivations $_{kl}D$ on generators of $H^0_{G_{ev}}(\la)$}

Identify the induced module $H^0_{G_{ev}}(\lambda)$ with its image under the map $\phi$. 
Then it has a basis consisting of products $B^+(I)B^-(J)$, where $I$ has entries from the set $1, \ldots, m$ and the tableau $T(I)$ is 
standard of shape $\lambda^+$, and $J$ has entries from the set $m+1, \ldots, m+n$ and the tableau $T(J)$ is standard of shape $\lambda^-$.
Therefore the action of $_{kl}D$ on these generators of $H^0_{G_{ev}}(\la)$ is determined by
its action on determinants $D^+(i_1, \ldots, i_s)$ and $D^-(j_1, \ldots, j_s)$ which was determined earlier.

\begin{lm}\label{gen+}
Let $1\leq k\leq m$, $m+1\leq l\leq m+n$ and $w^+=B^+(I)$ be a bideterminant of shape $\lambda^+$. 
Then 
\[(w^+)_{kl}D=\sum_{a=1}^m (w^+)_{ka}D y_{al}.\]
\end{lm}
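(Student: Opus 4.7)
The plan is to expand $w^+=B^+(I)=\prod_{b=1}^{\lambda^+_1} D^+(i_{1b},\ldots,i_{\ell(b),b})$ as a product of the column determinants $D^+_b$, apply the super-Leibniz rule to both sides of the desired identity, and match the results term by term using Lemmas~\ref{4} and~\ref{4'}.

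First I note that every $D^+_b$ involves only generators $c_{rs}$ with $1\leq r,s\leq m$, so each $D^+_b$ is an even element of $A(m|n)$. Consequently, when the odd superderivation $_{kl}D$ acts on the product $w^+$, every sign $(-1)^{(|k|+|l|)|D^+_{b'}|}$ appearing in the super-Leibniz rule equals $1$, so I obtain the ordinary Leibniz expansion
\[(w^+)_{kl}D=\sum_{b=1}^{\lambda^+_1} D^+_1\cdots (D^+_b)_{kl}D\cdots D^+_{\lambda^+_1},\]
and the analogous identity holds for the even derivation $_{ka}D$ with $1\leq a\leq m$.

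Next I apply Lemma~\ref{4} factor by factor. If $k$ appears in the index list of $D^+_b$ at some position $t$, then
\[(D^+_b)_{kl}D=\sum_{a=1}^m D^+(i_{1b},\ldots,i_{t-1,b},a,i_{t+1,b},\ldots,i_{\ell(b),b})\,y_{a,l},\]
while if $k$ does not appear then $(D^+_b)_{kl}D=0$. By Lemma~\ref{4'}, the modified determinant in the first case is precisely $(D^+_b)_{ka}D$, and in the second case $(D^+_b)_{ka}D$ also vanishes. Hence in all cases
\[(D^+_b)_{kl}D=\sum_{a=1}^m (D^+_b)_{ka}D\cdot y_{a,l}.\]

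To finish, I substitute this into the Leibniz expansion of $(w^+)_{kl}D$. The main (though mild) obstacle is the parity bookkeeping: each resulting term has an odd factor $y_{a,l}$ trapped between $(D^+_b)_{ka}D$ and the tail $D^+_{b+1}\cdots D^+_{\lambda^+_1}$, and one must pull it past the tail. Since $y_{a,l}$ is odd but each $D^+_{b'}$ with $b'>b$ is even, the supercommutation introduces no sign and $y_{a,l}$ slides to the far right cleanly. Interchanging the sums over $a$ and $b$ and collecting then yields $\sum_{a=1}^m \bigl(\sum_{b} D^+_1\cdots (D^+_b)_{ka}D\cdots D^+_{\lambda^+_1}\bigr) y_{a,l}=\sum_{a=1}^m (w^+)_{ka}D\,y_{a,l}$, which is the asserted identity.
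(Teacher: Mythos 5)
Your proof is correct, and it is organized differently from the paper's. The paper proves the lemma only for the highest weight vector $w^+=v^+$ (explicitly declaring that the general bideterminant case is "left for the reader"), and its computation leans on the special structure of $v^+$: primitivity gives $(v^+)_{ks}D=0$ for $s<k$ and the weight gives $(v^+)_{kk}D=\la^+_kv^+$, after which both sides are expanded via Lemma~\ref{4} and matched using explicit ratios of determinants. You instead prove the identity uniformly for an arbitrary $B^+(I)$ by reducing it to the single-column identity $(D^+_b)_{kl}D=\sum_{a=1}^m (D^+_b)_{ka}D\,y_{a,l}$, which you obtain by recognizing, via Lemma~\ref{4'}, that the modified determinant appearing in Lemma~\ref{4} is exactly $(D^+_b)_{ka}D$; the full statement then follows from the (sign-free, since every column determinant is even and every $y_{a,l}$ commutes past them) Leibniz expansions of both sides. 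This is the cleaner route: it supplies precisely the "general case" the paper omits, and it isolates the real content of the lemma as a per-factor statement rather than a property of the particular vector $v^+$. The only point worth stating explicitly is that Lemmas~\ref{4} and~\ref{4'} are formulated for strictly increasing index lists, whereas a column of a general tableau $T(I)$ need not be increasing; since $D^+$ is alternating in its arguments (and vanishes, together with both sides of your per-factor identity, when an index repeats), this is harmless, but a sentence acknowledging it would make the argument airtight.
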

\begin{proof}
The proof follows from Lemma \ref{4}. For simplicity we derive the proof for $w^+=v^+$ and leave the general case for the reader.
 
Since $v^+$ is a primitive vector for $GL(m)$, we have $(v^+)_{ks}D=0$ for $s<k$. Also, since $v^+$ is of weight $\lambda^+$, we have 
$(v^+)_{kk}D=\lambda^+_k v^+$. Therefore we need to show that  
\[(v^+)_{kl}D=\la^+_kv^+y_{kl} +\sum_{k<s\leq m} (v^+)_{ks}D y_{sl}.\]
From Lemma \ref{4} we infer that if $k\leq j$, then 
\[(D^+(1, \ldots , j)^{\la^+_j-\la^+_{j+1}})_{kl}D=(\la^+_j-\la^+_{j+1})D^+(1, \ldots, j)^{\la^+_j-\la^+_{j+1}}y_{kl}\]
\[+ (\la^+_j-\la^+_{j+1})D^+(1, \ldots, j)^{\la^+_j-\la^+_{j+1}-1}\sum_{k<s\leq m} D^+(1, \ldots,k-1, s, k+1, \ldots j)y_{sl}\]
and $(D^+(1, \ldots , j)^{\la^+_j-\la^+_{j+1}})_{kl}D=0$ for $k>j$.  

Therefore 
\[(v^+)_{kl}D=\la^+_k v^+y_{kl} + \sum_{k<s\leq m} (\sum_{k\leq j<s} 
v^+(\la^+_j-\la^+_{j+1})\frac{D^+(1, \ldots,k-1, s, k+1, \ldots j)}{D^+(1, \ldots , j)}) y_{sl}\]
\[=\la^+_kv^+y_{kl} +\sum_{k<s\leq m} (v^+)_{ks}D y_{sl}.\]
\end{proof}

\begin{lm}\label{gen-}
Let $1\leq k\leq m$, $m+1\leq l\leq m+n$ and $w^-=B^-(J)$ be a bideterminant of shape $\lambda^-$. 
Then \[(w^-)_{kl}D=\sum_{b=m+1}^{m+n} (w^-)_{bl}D y_{kb}.\]
\end{lm}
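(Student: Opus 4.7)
The plan is to reduce the statement to the case of a single determinant factor $w^-=D^-(j_1,\ldots,j_s)$, and then to verify the identity term-by-term using Lemmas \ref{3} and \ref{3'}.

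For the reduction, observe that each factor $D^-(j_1,\ldots,j_s)$ occurring in $B^-(J)=\prod_b D^-(j_{1b},\ldots,j_{\ell(b),b})$ is the determinant of a matrix whose entries $\phi(c_{ij})$ (for $m+1\leq i,j\leq m+n$) are even, and is therefore itself even. The superderivation $_{kl}D$ with $1\leq k\leq m$, $m+1\leq l\leq m+n$ is odd, the superderivation $_{bl}D$ with $m+1\leq b,l\leq m+n$ is even, and each $y_{kb}$ is odd. Using the right-superderivation rule and the fact that in the supercommutative algebra $K[G]$ even elements commute with everything, a straightforward computation shows that if the formula $(u)_{kl}D=\sum_{b=m+1}^{m+n}(u)_{bl}D\,y_{kb}$ holds for two even factors $u$ and $v$ separately, then it holds for the product $uv$. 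Iterating this reduces the proof to the case $w^-=D^-(j_1,\ldots,j_s)$.

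For a single determinant, Lemma \ref{3} gives
\[
(D^-(j_1,\ldots,j_s))_{kl}D=\sum_{i=1}^s D^-(j_1,\ldots,j_{i-1},l,j_{i+1},\ldots,j_s)\,y_{k,j_i}.
\]
On the other hand, after discarding the trivial case of repeated indices (where $D^-(j_1,\ldots,j_s)=0$), Lemma \ref{3'} shows that $(D^-(j_1,\ldots,j_s))_{bl}D$ vanishes unless $b$ coincides with some (then unique) $j_i$, in which case the value is $D^-(j_1,\ldots,j_{i-1},l,j_{i+1},\ldots,j_s)$. Hence
\[
\sum_{b=m+1}^{m+n}(D^-(j_1,\ldots,j_s))_{bl}D\,y_{kb}=\sum_{i=1}^s D^-(j_1,\ldots,j_{i-1},l,j_{i+1},\ldots,j_s)\,y_{k,j_i},
\]
which agrees with the previous expression.

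The only point requiring care is the product-rule step: one must check that the odd element $y_{kb}$ produced by applying the inductive hypothesis to one factor can be freely moved past the other even factor without picking up a sign. This is immediate from supercommutativity of $K[G]$, so no genuine obstacle arises; the whole argument is a matching of terms delivered by Lemmas \ref{3} and \ref{3'}.
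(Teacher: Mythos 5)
Your proof is correct, and it is in fact more complete than the one printed in the paper. The paper's proof of Lemma \ref{gen-} only carries out the computation for $w^-=v^-$, exploiting the specific product structure $v^-=\prod_j D^-(m+1,\ldots,m+j)^{\la^-_j-\la^-_{j+1}}$ together with the primitivity and weight of $v^-$ to identify the resulting sum with $\sum_t (v^-)_{tl}D\,y_{kt}$, and explicitly leaves the general bideterminant to the reader. You instead isolate the real content as a multiplicativity statement: if the identity holds for two even factors it holds for their product (which is immediate from the right-superderivation rule once one notes that the odd $y_{kb}$ commutes with the even factor), and then verify the single-determinant case by matching the output of Lemma \ref{3} against the sum computed via Lemma \ref{3'}. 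Both arguments rest on the same two lemmas, so the routes are close in substance, but your reduction handles an arbitrary $B^-(J)$ uniformly and makes explicit the sign bookkeeping that the paper glosses over; the paper's version buys a slightly shorter computation for the one vector ($v^-$) it actually needs later, at the cost of leaving the general case unproved.
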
 
\begin{proof}
The proof follows from Lemma \ref{3}. For simplicity we derive the proof for $w^-=v^-$ and leave the general case for the reader.
 
Since $v^-$ is a primitive vector for $GL(n)$, we have $(v^-)_{tl}D=0$ for $t>l$. Also, since $v^-$ is of weight $\lambda^-$, we have 
$(v^-)_{ll}D=\lambda^-_l v^-$. Therefore we need to show that  
\[(v^-)_{kl}D=\la^-_lv^-y_{kl} +\sum_{m+1\leq t<l} (v^-)_{tl}D y_{kt}.\]
From Lemma \ref{3} we infer that if $m+j<l$, then 
\[(D^-(m+1, \ldots, m+j)^{\la^-_j-\la^-_{j+1}})_{kl}D=(\la^-_j-\la^-_{j+1}) D^-(m+1, \ldots, m+j)^{\la^-_j-\la^-_{j+1}-1}\times\]
\[\left[D^-(l,m+2,\ldots, m+j)y_{k,m+1}+D^-(m+1,l,\ldots, m+j)y_{k,m+2}+ \right.\]
\[\left. \ldots + D^-(m+1, \ldots, m+j-1,l)y_{k,m+j}\right]\]
and 
\[(D^-(m+1, \ldots, m+j)^{\la^-_j-\la^-_{j+1}})_{kl}D=(\la^-_j-\la^-_{j+1}) D^-(m+1, \ldots, m+j)^{\la^-_j-\la^-_{j+1}}y_{kl}\]
for $m+j\geq l$.

Therefore using Lemma \ref{3'}
\[(v^-)_{kl}D=\la^-_l v^-y_{kl}\] 
\[+ \sum_{m+1\leq t<l} 
(\sum_{t\leq m+j<l} v^-(\la^-_j -\la^-_{j+1})\frac{D^-(m+1, \ldots t-1, l, t+1, \ldots , m+j)}{D^-(m+1, \ldots, m+j)})y_{kt}\]
\[=\la^-_l v^-y_{kl} + \sum_{m+1\leq t<l} (v^-)_{tl}D y_{kt}.\]
\end{proof}

The following formula describes the action of odd superderivations $_{kl}D$ on generators $w^+w^-$ of $H^0_{G_{ev}}(\lambda)$.

\begin{teo}\label{gen}
Let $1\leq k\leq m$, $m+1\leq l\leq m+n$, $w^+=B^+(I)$ be a bideterminant of shape $\lambda^+$,  
$w^-=B^-(J)$ be a bideterminant of shape $\lambda^-$ and $w=w^+w^-$.
Then 
\[(w)_{kl}D=\sum_{a=1}^m (w)_{ka}D y_{al}+\sum_{b=m+1}^{m+n} (w)_{bl}D y_{kb} \text{ and } (w)_{lk}D=0.\]
\end{teo}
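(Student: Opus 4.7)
The plan is to expand $(w^+w^-)_{kl}D$ via the super-Leibniz rule, apply Lemmas \ref{gen+} and \ref{gen-} to the two resulting summands, and then repackage the result using the fact that certain ``cross'' derivations vanish. First note that $w^-$ is even in $K[G]$, since each factor $\phi(c_{ij})$ with $m+1\leq i,j\leq m+n$ has parity $|i|+|j|\equiv 0\pmod 2$, and a determinant of such even entries is even. Because $_{kl}D$ is odd while $w^-$ is even, the super-Leibniz rule introduces no sign and gives
\[(w^+w^-)_{kl}D=(w^+)_{kl}D\cdot w^- + w^+\cdot (w^-)_{kl}D.\]
Substitution from Lemmas \ref{gen+} and \ref{gen-} then yields
\[(w^+w^-)_{kl}D=\sum_{a=1}^m(w^+)_{ka}D\cdot y_{al}\cdot w^- + w^+\cdot\sum_{b=m+1}^{m+n}(w^-)_{bl}D\cdot y_{kb}.\]

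The core step is to check that the ``mixed'' derivations behave as expected: $(w^-)_{ka}D=0$ for all $1\leq k,a\leq m$, and $(w^+)_{bl}D=0$ for $m+1\leq b\leq m+n$. For the former, if $k\neq a$ then Lemma \ref{3'''} kills each $D^-$-factor of $w^-$, and the (even) Leibniz rule extends this to the product; if $k=a$ then $_{kk}D$ acts on $w^-$ by the $k$-th coordinate of its weight, which vanishes because the weight of $w^-$ is supported on positions $m+1,\ldots,m+n$. For the latter, Lemma \ref{4''} gives $(D^+)_{bl}D=0$ and Leibniz extends this to $w^+$. Consequently
\[(w)_{ka}D=(w^+)_{ka}D\cdot w^-\text{ for }1\leq k,a\leq m,\qquad (w)_{bl}D=w^+\cdot(w^-)_{bl}D\text{ for }m+1\leq b,l\leq m+n.\]

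To finish the first identity I use that $y_{al}$ is odd and $w^-$ is even, hence they supercommute, so $(w^+)_{ka}D\cdot y_{al}\cdot w^-=(w^+)_{ka}D\cdot w^-\cdot y_{al}=(w)_{ka}D\cdot y_{al}$. Substituting this back into the expansion delivers
\[(w)_{kl}D=\sum_{a=1}^m(w)_{ka}D\,y_{al}+\sum_{b=m+1}^{m+n}(w)_{bl}D\,y_{kb}.\]
For the second identity I argue analogously: Lemma \ref{4''} together with Leibniz gives $(w^+)_{lk}D=0$, Lemma \ref{3''} together with Leibniz gives $(w^-)_{lk}D=0$, and one more application of Leibniz yields $(w)_{lk}D=0$. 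The only genuinely delicate point is the parity bookkeeping that guarantees super-Leibniz introduces no signs and that $y_{al}$ may be slid past $w^-$; once these and the vanishing of the cross-terms have been established, the theorem reduces to direct substitution.
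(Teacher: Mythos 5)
Your proposal is correct. For the first identity you follow essentially the paper's route: the paper simply says it ``follows from Lemmas \ref{gen-} and \ref{gen+}'', and what you have done is fill in the implicit steps --- the super-Leibniz expansion $(w^+w^-)_{kl}D=(w^+)_{kl}D\,w^-+w^+(w^-)_{kl}D$ (no sign since $w^-$ is even), the vanishing of the cross-derivations $(w^-)_{ka}D$ and $(w^+)_{bl}D$, and the sliding of the odd $y_{al}$ past the even $w^-$. These facts are exactly what the paper formalizes only afterwards, in the unnumbered lemma of the subsection on even superderivations, so your bookkeeping (including the weight argument for the diagonal case $(w^-)_{kk}D=0$, which Lemma \ref{3'''} does not cover) is a legitimate and welcome expansion of the same argument. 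Where you genuinely diverge is the second identity $(w)_{lk}D=0$: the paper deduces it from the observation that the weight of $(w)_{lk}D$ is not a weight of $H^0_G(\la)$, appealing to the isomorphism $\tilde{\phi}$ or to Corollary 5.4 of \cite{z}, whereas you compute directly from Lemmas \ref{3''} and \ref{4''} together with the Leibniz rule. Your computation is more elementary and self-contained (it avoids the external reference and any discussion of the weight support of $H^0_G(\la)$), at the cost of being slightly longer; the paper's weight argument is shorter but less explicit. Both are valid.
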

\begin{proof}
The first formula follows from Lemmas \ref{gen-} and \ref{gen+}. The statement that $(w)_{lk}D=0$ folows from observation that the weight 
of the element $(w)_{lk}D\in H^0_G(\la)$ does not belong to the weights of $H^0_G(\la)$. These weights are described with the help of the isomorphism $\tilde{\phi}$ 
or one can use Corollary 5.4 of \cite{z}.
\end{proof}

Note that if the weight of $w^+$ is $\mu^+$ and the weight of $w^-$ is $\mu^-$, then the terms corresponding to $a=k$ and $b=l$ in the above formula
add up to $(w)_{kk}Dy_{kl}+(w)_{ll}Dy_{kl}=(\mu^+_k+\mu^-_l)wy_{kl}$.  
\begin{cor}\label{genv}
Let $1\leq k \leq m, m+1\leq l\leq m+n$ and $v=v^+v^-$ be the highest vector in $H^0_G(\lambda)$ defined earlier. Then
\[(v)_{kl}D=(\la^+_k+\la^-_l)vy_{kl} +\sum_{k<s\leq m} (v)_{ks}D y_{sl} + \sum_{m+1\leq t<l} (v)_{tl}D y_{kt} \text{ and } (v)_{lk}D=0.\]
\end{cor}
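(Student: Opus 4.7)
The plan is to derive the corollary as a direct specialization of Theorem \ref{gen} applied to $w = v = v^+ v^-$, using the facts (already established in Lemmas \ref{gen+}, \ref{gen-}) that $v^+$ and $v^-$ are primitive vectors for $GL(m)$ and $GL(n)$ respectively, together with the weight information.

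First I would invoke Theorem \ref{gen} directly: since $v = v^+ v^-$ with $v^+ = B^+(I_0)$ and $v^- = B^-(J_0)$ for the canonical multi-indices $I_0, J_0$, the theorem gives
\[
(v)_{kl}D = \sum_{a=1}^m (v)_{ka}D\, y_{al} + \sum_{b=m+1}^{m+n} (v)_{bl}D\, y_{kb},
\qquad (v)_{lk}D = 0.
\]
The second identity is immediate, so only the first sum needs simplification.

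Next I would prune the $a$-sum. For $1 \le k, a \le m$, the even superderivation $_{ka}D$ satisfies $(v^+ v^-)_{ka}D = (v^+)_{ka}D\, v^- + v^+ (v^-)_{ka}D$. The second summand is zero: by Lemma \ref{3'''} we have $(D^-(j_1,\ldots,j_s))_{ka}D = 0$ when $k \neq a$ with both in $\{1,\ldots,m\}$, and when $k=a$ the diagonal derivation $_{kk}D$ measures the multiplicity of $k$ among the column indices of $v^-$, which is zero since $v^-$ involves only columns $m+1,\ldots,m+n$. So $(v)_{ka}D = (v^+)_{ka}D\, v^-$. By the primitive vector property of $v^+$ (cf.\ the remarks in the proof of Lemma \ref{gen+}) one has $(v^+)_{ka}D = 0$ for $a < k$. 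The $a = k$ term contributes the weight scalar $(v)_{kk}D = \la^+_k v$.

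Then I would prune the $b$-sum analogously. For $m+1 \le b, l \le m+n$, Lemma \ref{4''} forces $(v^+)_{bl}D = 0$, so $(v)_{bl}D = v^+ (v^-)_{bl}D$. By the primitive vector property of $v^-$ used in Lemma \ref{gen-}, $(v^-)_{bl}D = 0$ for $b > l$, while the $b = l$ term contributes $(v)_{ll}D = \la^-_l v$. Combining the two diagonal contributions yields the coefficient $(\la^+_k + \la^-_l) v y_{kl}$ of the leading term, and the surviving off-diagonal pieces are precisely the two residual sums
\[
\sum_{k < s \le m} (v)_{ks}D\, y_{sl} + \sum_{m+1 \le t < l} (v)_{tl}D\, y_{kt},
\]
which gives the stated formula. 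The only step requiring any care is the vanishing of $(v^-)_{ka}D$ for $k,a \le m$, but that follows directly from Lemma \ref{3'''} and the weight of $v^-$; no real obstacle arises.
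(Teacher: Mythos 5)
Your proposal is correct and matches the paper's intended derivation: the corollary is stated without separate proof precisely because it is Theorem \ref{gen} specialized to $w=v$, with the $a<k$ and $b>l$ terms killed by the primitivity of $v^+$ and $v^-$ (as recorded in the proofs of Lemmas \ref{gen+} and \ref{gen-}) and the diagonal terms combining to $(\la^+_k+\la^-_l)vy_{kl}$ exactly as in the remark preceding the corollary. Your auxiliary verification that $(v^-)_{ka}D=0$ for $1\le k,a\le m$ is also fine (and is in fact covered uniformly, including $k=a$, by Lemma \ref{2'''}).
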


\subsection{Action of even superderivations $_{kl}D$, $_{kl}^{(r)}D$ and $\binom{_{kk}D}{r}$ on generators of $H^0_{G_{ev}}(\la)$}

\begin{lm} Let $w^+=B^+(I)$ be a bideterminant of shape $\lambda^+$,  $w^-=B^-(J)$ be a bideterminant of shape $\lambda^-$, $w=w^+w^-$ and $r\geq 1$.
If $1\leq k,l \leq m$, then $(w)_{kl}^{(r)}D=(w^+)_{kl}^{(r)}Dw^-$ and $(w)\binom{_{kk}D}{r}=(w^+)\binom{_{kk}D}{r}w^-$. 
If $m+1\leq k,l \leq m+n$, then $(w)_{kl}^{(r)}D=w^+(w^-)_{kl}^{(r)}D$ and $(w)\binom{_{kk}D}{r}=w^+(w^-)\binom{_{kk}D}{r}$.
\end{lm}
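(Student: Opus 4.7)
The plan is to reduce all four formulas to two first-order vanishing statements: $(w^-)_{kl}D=0$ whenever $1\le k,l\le m$, and $(w^+)_{kl}D=0$ whenever $m+1\le k,l\le m+n$. For the first, if $k\ne l$ I will apply Lemma \ref{3'''} to each column factor $D^-(j_{1b},\ldots,j_{\ell(b),b})$ of $w^-=B^-(J)$ and propagate through the product using the (even) superderivation property. For $k=l$ I will argue by weight: $w^-$ is a weight vector whose weight $cont(J)$ is supported in positions $m+1,\ldots,m+n$, while ${}_{kk}D$ acts on any weight vector of weight $\mu$ by multiplication by $\mu_k$, so it annihilates $w^-$ whenever $k\le m$. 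The dual vanishing in the second regime is proved symmetrically, with Lemma \ref{4''} in place of Lemma \ref{3'''}.

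Granting these, the divided-power formula for $k\ne l$ follows from the standard divided-power Leibniz identity for an even derivation on a supercommutative product,
\[
(ab)\partial^{(r)}=\sum_{i=0}^{r}(a)\partial^{(i)}\cdot (b)\partial^{(r-i)},
\]
applied with $\partial={}_{kl}D$, $a=w^+$, $b=w^-$: because $(w^-)_{kl}D=0$ forces $(w^-)_{kl}^{(j)}D=0$ for every $j\ge 1$, only the summand $i=r$ survives. The second regime is identical with the roles of $w^+$ and $w^-$ exchanged. For the binomial identity I will observe that $w=w^+w^-$ and $w^+$ are weight vectors with equal $k$-th weight coordinate $\mu^+_k$ (the weight of $w^-$ having $k$-th coordinate zero when $k\le m$); since $\binom{{}_{kk}D}{r}$ is a polynomial in the single diagonal operator ${}_{kk}D$, it must act on $w$ and on $w^+$ by the same scalar, and multiplying the resulting identity on $w^+$ by $w^-$ closes the argument. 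The $k=l$ case in the second regime is treated identically, using instead that the weight of $w^+$ has vanishing coordinate at any index $\ge m+1$.

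The main obstacle will be the diagonal case $k=l$ of the first-order vanishing, which lies outside the scope of Lemmas \ref{3'''} and \ref{4''} as stated; the weight argument above resolves it cleanly, and could equivalently be replaced by a direct appeal to Lemma \ref{2'''}, whose proof in fact imposes no restriction on $k$ versus $l$. The remaining steps are purely formal, and the invocation of the divided-power Leibniz identity is legitimate because ${}_{kl}D$ has even parity $|k|+|l|\equiv 0\pmod{2}$ in both regimes under consideration.
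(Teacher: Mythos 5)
Your overall route is the same as the paper's: reduce everything to the first-order vanishing $(w^-)_{kl}D=0$ for $1\leq k,l\leq m$ (via Lemma \ref{3'''}) and $(w^+)_{kl}D=0$ for $m+1\leq k,l\leq m+n$ (via Lemma \ref{4''}), and then propagate to divided powers and binomials. Your treatment of the diagonal case $k=l$ is actually more careful than the paper's, which cites Lemma \ref{3'''} even though that lemma is stated only for $k\neq l$; your weight argument (or the observation that the proof of Lemma \ref{2'''} never uses $k\neq l$) closes that small loophole cleanly. The binomial case via the scalar action of $\binom{_{kk}D}{r}$ on weight vectors is also fine in any characteristic.

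There is, however, one genuine gap: the step ``$(w^-)_{kl}D=0$ forces $(w^-)_{kl}^{(j)}D=0$ for every $j\geq 1$.'' In characteristic $p>0$ this is \emph{not} a formal implication: a vector annihilated by an even root element $e$ need not be annihilated by its divided power $e^{(p)}$ (this is exactly the phenomenon that makes Lie-algebra-primitive vectors weaker than $\Dist(G)$-primitive vectors), and the naive justification $_{kl}^{(j)}D=\tfrac{_{kl}^jD}{j!}$ breaks down when $p\mid j!$. The paper resolves this by framing the entire computation inside the $\mathbb{Z}$-form $\mathbb{C}[G]_{\mathbb{Z}}$: since Lemma \ref{3'''} is an identity valid over $\mathbb{Z}$, one has $(w^-)_{kl}D=0$ already in $\mathbb{C}[G]_{\mathbb{Z}}$, hence $(w^-)_{kl}^{(j)}D=\tfrac{(w^-)_{kl}^jD}{j!}=0$ there (the $\mathbb{Z}$-form is torsion-free), and only then does one apply the base change $-\otimes_{\mathbb{Z}}K$, under which all the formulas persist. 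You need to insert this integral-form-and-reduction step; once it is in place, your divided-power Leibniz identity argument (which is itself valid in any characteristic, coming from the comultiplication on $\Dist(G)$) goes through and the proof is complete.
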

\begin{proof}
Let us compute inside the $\mathbb{Z}$-form $\mathbb{C}[G]_{\mathbb{Z}}$ first. 

If  $1\leq k,l \leq m$, then Lemma \ref{3'''} implies that $(w^-)_{kl}D=0$. Therefore $(w^-)_{kl}^{(r)}D=0$ and $(w^-)\binom{_{kk}D}{r}=0$ for each $r>0$.
This implies $(w)_{kl}D=(w^+)_{kl}Dw^-$, $(w)_{kl}^{(r)}D=(w^+)_{kl}^{(r)}Dw^-$ and $(w)\binom{_{kk}D}{r}=(w^+)\binom{_{kk}D}{r}w^-$.

If  $m+1\leq k,l \leq m+n$, then Lemma \ref{4''} implies that $(w^+)_{kl}D=0$. Therefore $(w^+)_{kl}^{(r)}D=0$ and $(w^+)\binom{_{kk}D}{r}=0$ for each $r>0$.
This implies $(w)_{kl}D=w^+(w^-)_{kl}D$, $(w)_{kl}^{(r)}D=w^+(w^-)_{kl}^{(r)}D$ and $(w)\binom{_{kk}D}{r}=w^+(w^-)\binom{_{kk}D}{r}$.

When we apply the modular reduction to the base field $K$ of characteristic $p>2$, all the above formulas remain valid.
\end{proof}

Using the previous Lemma we can simplify actions of even superderivations and their divided powers on $H^0_{G_{ev}}(\la)$ as follows. 
If $1\leq k,l \leq m$, then $_{kl}D$, $_{kl}^{(r)}D$ and $\binom{_{kk}D}{r}$ act as their classical counterparts on $H^0_{GL(m)}(\la^+)$ and act trivially on $H^0_{GL(n)}(\la^-)$.
If $m+1\leq k,l\leq m+n$, then $_{kl}D$, $_{kl}^{(r)}D$ and $\binom{_{kk}D}{r}$ act as images under $\phi$ of their classical counterparts on $H^0_{GL(n)}(\la^-)$ 
and act trivially on $H^0_{GL(m)}(\la^+)$.
Thus the action of even $_{kl}D$, $_{kl}D^{(r)}$ and $\binom{_{kk}D}{r}$ on $H^0_{G_{ev}}(\la)$ is determined by the $GL(m)$-structure of $H^0_{GL(m)}(\la^+)$ and the $GL(n)$-structure of 
$H^0_{GL(n)}(\la^-)$.

\section{Primitive vectors in $H^0_{G_{ev}}(\la)\otimes Y$}

We will consider $H^0_{G_{ev}}(\lambda)$ embedded into $H^0_G(\lambda)$ via the map $\phi$ as before.

In order to study the structure of the induced supermodule $H^0_G(\la)$, we will consider the tensor products 
$F_r=H^0_{G_{ev}}(\la)\otimes (\wedge^r Y)$ of $G_{ev}$-module $H^0_{G_{ev}}(\la)$ with 
the $r$-th exterior power $\wedge^i Y= Y \wedge Y \ldots \wedge Y$ of the $G_{ev}$-module $Y$.

The module $F_r$, that will be called the $r$-th floor, is given as a $K$ -span of all vectors 
$w\otimes (y_{i_1,j_1}\wedge\ldots \wedge y_{i_r,j_r})$, where $w\in H^0_{G_{ev}}(\la)$ and elements $y_{i_1, j_1}, \ldots, y_{i_r, j_r}$ are pairwise distinct.

It is clear that $H^0_G(\lambda)$ viewed as a $G_{ev}$-module is isomorphic to the direct sum $\oplus_{r=0}^{mn} F_r$ and when trying to determine the $G_{ev}$-primitive vectors we can 
restrict our attention to individual floors $F_r$. 

In this section, we will describe explicitly $G_{ev}$-primitive vectors in $F_1=H^0_{G_{ev}}(\la)\otimes Y$. 
In characteristic zero case we will provide a complete list of such primitive vectors.

\subsection{Good filtration of $H^0_{G_{ev}}(\la)\otimes Y$}

Earlier we have described the $G_{ev}$-module structure of $Y$ as $Y\simeq (V_m)^*\otimes V_n$, where $V_m$ is the natural representation of $GL(m)$ and $V_n$ is the natural representation of $GL(n)$.

Since $H^0_{G_{ev}}(\la)\cong H^0_{\GL(m)}(\la^+)\otimes H^0_{\GL(n)}(\la^-)$, we can identify the $G_{ev}$-module 
$H^0_{G_{ev}}(\la)\otimes Y$ with the tensor product of the $\GL(m)$-module $H^0_{\GL(m)}(\la^+)\otimes (V_m)^*$ and 
the $\GL(n)$-module $H^0_{\GL(n)}(\la^-)\otimes V_n$. 

For $1\leq i \leq m$ denote by $\delta^+_i$ the weight for $G$ that has all entries equal to zero except the entry at the $i$th position which equals one, and for $1\leq j\leq n$
denote by $\delta^-_j$ the weight for $G$ that has all entries equal to zero except the entry at the $(m+j)$th position which equals one.
We will abuse the notation and denote by $\delta^+_i$ the corresponding weight of $GL(m)$ and by $\delta^-_j$ the corresponding weight of $GL(n)$.
Also, for $1\leq i\leq m$ and $1\leq j\leq n$ denote $\lambda_{ij}= \lambda-\delta^+_i+\delta^-_j$.

The following lemma is the folklore for which we were unable to find an appropriate reference.

\begin{lm}\label{first-floor}
The $G_{ev}$-module $H^0_{G_{ev}}(\la)\otimes Y$ has a (good) filtration by $G_{ev}$-induced modules $H^0_{G_{ev}}(\la_{ij})$.
Each factor in this filtration has a simple multiplicity and $H^0_{G_{ev}}(\la_{ij})$ appears in this filtration if and only if 
$\la^+_{i}\neq \la^+_{i+1}$ if $i<m$, $\la^+_i\neq 0$ for $i=m$, and $\la^-_{j-1}\neq \la^-_{j}$ if $j>1$.
\end{lm}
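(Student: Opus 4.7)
The plan is to reduce the lemma to the classical Pieri rule, exploiting the factorization $G_{ev}\cong \GL(m)\times \GL(n)$. Using the external tensor product decompositions $H^0_{G_{ev}}(\la)\cong H^0_{\GL(m)}(\la^+)\boxtimes H^0_{\GL(n)}(\la^-)$ and $Y\cong (V_m)^*\boxtimes V_n$ (the latter recalled in Section~1), I would first rewrite
\[H^0_{G_{ev}}(\la)\otimes Y\;\cong\;\bigl(H^0_{\GL(m)}(\la^+)\otimes (V_m)^*\bigr)\boxtimes \bigl(H^0_{\GL(n)}(\la^-)\otimes V_n\bigr),\]
so that the problem splits into two independent Pieri computations for the classical groups $\GL(m)$ and $\GL(n)$.

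For the $\GL(n)$-factor, tensoring $H^0_{\GL(n)}(\la^-)$ with the natural module $V_n$ produces, by the classical Pieri rule, a good filtration whose successive quotients are $H^0_{\GL(n)}(\la^-+\epsilon_{m+j})$, each with multiplicity one, ranging over those $j\in\{1,\ldots,n\}$ for which $\la^-+\epsilon_{m+j}$ is a dominant weight; this condition is automatic for $j=1$ and for $j>1$ amounts to $\la^-_{j-1}\neq\la^-_j$. For the $\GL(m)$-factor, I would identify $(V_m)^*\cong\wedge^{m-1}V_m\otimes{\det}^{-1}$, apply the Pieri rule for the exterior-power module, and then twist back by the one-dimensional character ${\det}^{-1}$. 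This yields a good filtration of $H^0_{\GL(m)}(\la^+)\otimes (V_m)^*$ whose successive quotients are $H^0_{\GL(m)}(\la^+-\epsilon_i)$, each with multiplicity one, over those $i\in\{1,\ldots,m\}$ such that $\la^+-\epsilon_i$ is a dominant polynomial weight. In the partition convention this amounts to $\la^+_i\neq\la^+_{i+1}$ for $i<m$, and $\la^+_m\neq 0$ for $i=m$. Both Pieri statements are classical and contained, for example, in Chapter~II.4 of Jantzen's book on representations of algebraic groups.

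Finally, I would combine the two filtrations using the standard principle that an external tensor product of a $\GL(m)$-module and a $\GL(n)$-module, each with a good filtration, carries a good filtration for $G_{ev}$: one filters first with respect to the $\GL(m)$-filtration, obtaining pieces of the form $H^0_{\GL(m)}(\la^+-\epsilon_i)\boxtimes (H^0_{\GL(n)}(\la^-)\otimes V_n)$, and then refines each of these steps using the $\GL(n)$-filtration of the second factor. The successive quotients are then precisely the external tensor products $H^0_{\GL(m)}(\la^+-\epsilon_i)\boxtimes H^0_{\GL(n)}(\la^-+\epsilon_{m+j})=H^0_{G_{ev}}(\la_{ij})$, each appearing with multiplicity one, exactly under the stated conditions on $(i,j)$.

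The main point requiring care is the $i=m$ case of the Pieri rule for $(V_m)^*$: although $\la^+-\epsilon_m$ is always $\GL(m)$-dominant in the weight-lattice sense, the partition convention implicit in the paper (reflecting the realization of $H^0_{\GL(m)}(\la^+)$ inside the polynomial part of $K[\GL(m)]$ after any necessary Berezinian normalization of $\la^-$) requires $\la^+_m\geq 1$, which is precisely the condition $\la^+_m\neq 0$.
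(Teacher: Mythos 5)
Your proof follows essentially the same route as the paper's: both split $H^0_{G_{ev}}(\la)\otimes Y$ as an external tensor product over $\GL(m)\times\GL(n)$, identify $(V_m)^*\cong\Lambda^{m-1}(V_m)\otimes D^{-1}$, and apply the Pieri (i.e.\ Littlewood--Richardson) rule to each factor before recombining the two filtrations. The only organizational difference is in positive characteristic: the paper does characteristic zero first and then invokes the Donkin--Mathieu theorem together with formal characters, while you invoke the characteristic-free good-filtration form of the Pieri rule directly; these are the same input packaged slightly differently.

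The one step that does not hold up is exactly the one you flag at the end, the case $i=m$. The module $H^0_{\GL(m)}(\la^+)\otimes (V_m)^*$ is determined up to isomorphism independently of any realization or ``partition convention,'' and the factors of any good filtration are read off from its formal character. Since $\la^+-\epsilon_m$ is always $\GL(m)$-dominant, the product $\chi(\la^+)\cdot\chi(-\epsilon_m)$ contains $\chi(\la^+-\epsilon_m)$ with coefficient one for every dominant $\la^+$, so the factor $H^0_{\GL(m)}(\la^+-\epsilon_m)$ occurs unconditionally; the Pieri rule simply does not produce the restriction $\la^+_m\neq 0$. Concretely, for $m=2$ and $\la^+=(1,0)$ one has $H^0_{\GL(2)}((1,0))\otimes V_2^*\cong V_2\otimes V_2^*$, which contains $H^0_{\GL(2)}((1,-1))$ even though $\la^+_2=0$. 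Your appeal to the realization of $H^0_{\GL(m)}(\la^+)$ inside the polynomial part of $K[\GL(m)]$ therefore does not establish the ``only if'' direction at $i=m$. To be fair, the paper's own proof asserts the same condition as an output of the Littlewood--Richardson rule with no further justification, so the defect lies in the statement rather than in anything you introduced; but the honest outcome of your computation is that for $i=m$ the factor always appears, and the closing paragraph of your argument should not be presented as if it closed that gap.
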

\begin{proof}
First we consider the case of characteristic zero. 

The structure of the $GL(n)$-module $H^0_{\GL(n)}(\la^-)\otimes V_n$ is given by the Littlewood-Richardson rule as
a direct sum of simple $GL(n)$-modules with highest weights $\la^-+\delta^-_{j}$, where the index $j$ is such that $\la^-_{j-1}\neq \la^-_{j}$ if $j>1$.

On the other hand, $V_m^*\simeq \Lambda^{m-1}(V_m)\otimes D^{-1}$. 
The Littlewood-Richardson rule shows that $H^0_{GL(m)}(\la^+) \otimes \Lambda^{m-1}(V_m)$ is a direct sum of simple $GL(m)$-modules with highest weights
$\la^+ +\sum_{1\leq t \leq m, t\neq i} \delta^+_t$, where $1\leq i\leq m$ is such that $\la^+_{i+1}\neq \la^+_i$ if $i<m$ and $\la^+_i\neq 0$ if $i=m$.
Since the (highest) weight of $D^{-1}$ is equal to
$-(\sum_{1\leq t\leq n} \delta^+_t)$, we conclude that $H^0_{\GL(m)}(\la^+)\otimes (V_m)^*$ is a direct sum of simple $GL(m)$-modules with 
highest weights $\la^+-\delta^+_{i}$, where the index $i$ is such that $\la^+_{i+1}\neq \la^+_i$ if $i<m$ and $\la^+_i\neq 0$ if $i=m$.
 
Therefore the $G_{ev}$-module $H^0_{G_{ev}}(\la)\otimes Y$ is a direct sum of $G_{ev}$-induced modules of highest weight $\la_{ij}$ satisfying the given restrictions on indices
$i$ and $j$.

Now assume that the characteristic of the base field $K$ is positive.

Denote by $1^s$ the weight corresponding to the partition consisting of $s$ terms which are all equal to $1$.
Since $V_n\simeq H^0_{GL(n)}(1)$, by Donkin-Mathieu theorem, the module $H^0_{\GL(n)}(\la^-)\otimes V_n$ has a filtration by induced $GL(n)$-modules.
Since $\Lambda^{m-1}(V_m)\simeq H^0_{GL(m)}(1^{m-1})$, again by Donkin-Mathieu theorem, the module $H^0_{GL(m)}(\la^+) \otimes \Lambda^{m-1}(V_m)$ has a filtration 
by induced $GL(m)$-modules. Since tensoring with $D^{-1}$ is not going to influence anything, we conclude that $H^0_{\GL(m)}(\la^+)\otimes (V_m)^*$ has a filtration
by induced $GL(m)$-modules. 
Consequently, the tensor product $H^0_{G_{ev}}(\la) \otimes Y$ has a filtration by induced $G_{ev}$-modules. Since the formal characters of induced modules and exterior powers
do not depend on the characteristic, the factors in the good filtration of $H^0_{G_{ev}}(\la)\otimes Y$ must be the same as in the characteristic zero case.
\end{proof}

\subsection{Case $n=1$}
If $n=1$, then $v^-=D^-(m+1)^{\la^-_1}=\phi(c_{m+1,m+1})^{\la^-_1}$.

For $i=1, \ldots, m-1$ such that $\lambda^+_i\neq \lambda^+_{i+1}$ and $i=m$ such that $\la^+_m\neq 0$
denote $f^+_i=\frac{v^+}{D^+(1,\ldots, i)}$,
and $g^+_i=D^+(1, \ldots, i) _{i,m+1}D$. 

If $f^+_i$ is defined, then conditions $\lambda^+_i-\lambda^+_{i+1}> 0$ for $i<m$ and $\la^+_i >0$ for $i=m$ imply that $f^+_i$ is a product of determinants $D^+(1,\ldots, j)$.

\begin{lm}\label{n=1} Assume $1\leq i <m$ and $\la^+_i-\la^+_{i+1}\neq 0$, or $i=m$ and $\la^+_i\neq 0$. Then the vector
\[\pi^+_i=\frac{v^+}{D^+(1,\ldots, i)}\sum_{k=i}^m D^+(1, \ldots, i-1,k)y_{k,m+1}\phi(c_{m+1,m+1})^{\la^-_1}\] 
is a $G_{ev}$-primitive vector of $F_1$.

If $char(K)=0$, then the set of all vectors $\pi^+_i$ as above forms a complete set of $G_{ev}$-primitive vectors in $F_1$. 
Consequently, $F_1$ is a direct sum of irreducible $G_{ev}$-modules with highest vectors $\pi^+_i$. 
\end{lm}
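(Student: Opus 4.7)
The plan is to verify the two defining properties of a $G_{ev}$-primitive vector (being a $T$-weight vector and being annihilated by every even raising operator) and then, in characteristic zero, to invoke Lemma \ref{first-floor} to upgrade primitivity to a complete description.

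First I would compute the weight of $\pi^+_i$. The factor $f^+_i = v^+/D^+(1,\ldots,i)$ has weight $\lambda^+ - (\epsilon_1 + \cdots + \epsilon_i)$; the determinant $D^+(1,\ldots,i-1,k)$ has weight $\epsilon_1 + \cdots + \epsilon_{i-1} + \epsilon_k$; the element $y_{k,m+1} = \phi(c_{k,m+1})$ has weight $-\epsilon_k + \epsilon_{m+1}$ (obtained from the right $T$-action on $C_{11}^{-1}C_{12}$, or from the explicit formula for $y_{k,m+1}$); and $\phi(c_{m+1,m+1})^{\lambda^-_1}$ has weight $\lambda^-_1 \epsilon_{m+1}$. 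The dependence on $k$ cancels inside each summand, so every term of $\pi^+_i$ carries the same weight $\lambda - \delta^+_i + \delta^-_1 = \lambda_{i,1}$.

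Next I would show $(\pi^+_i)_{uv}D = 0$ for every $u > v$ with $u,v \in \{1,\ldots,m\}$; for $n = 1$ these are the only non-trivial raising operators in $Lie(B_{ev})$. Because $_{uv}D$ is even, Leibniz carries no signs. Lemma \ref{2'''} annihilates $\phi(c_{m+1,m+1})^{\lambda^-_1}$; by Lemma \ref{4'}, $_{uv}D$ annihilates every $D^+(1,\ldots,a)$ (if $u \leq a$ the replacement of column $u$ by the already-present column $v$ duplicates a column, and if $u > a$ the result is $0$ a priori), so $_{uv}D\, f^+_i = 0$. Thus the entire action of $_{uv}D$ on $\pi^+_i$ reduces to its action on the middle sum $\sum_{k=i}^m D^+(1,\ldots,i-1,k)\, y_{k,m+1}$. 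Expanding by Leibniz and applying Lemma \ref{4'} to $D^+(1,\ldots,i-1,k)$ together with Lemma \ref{1'''} to $y_{k,m+1}$, a case analysis on whether $u$ lies in $\{1,\ldots,i-1\}$, equals $k$, or exceeds $k$, and on whether $v$ equals $k$, shows that for $v < i$ every contribution vanishes, and for $v \geq i$ the only surviving contributions are the $k = u$ term yielding $D^+(1,\ldots,i-1,v)\, y_{u,m+1}$ and the $k = v$ term yielding $-D^+(1,\ldots,i-1,v)\, y_{u,m+1}$, which cancel.

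Finally, in characteristic zero Lemma \ref{first-floor} gives $F_1 \cong \bigoplus_i H^0_{G_{ev}}(\lambda_{i,1})$ with $i$ ranging over exactly the indices allowed in our hypothesis (the condition involving $j$ is vacuous when $n=1$). Each summand is irreducible and contributes a one-dimensional space of primitive vectors of weight $\lambda_{i,1}$; since these weights are pairwise distinct, the vectors $\pi^+_i$ lie in distinct summands and form a complete set of $G_{ev}$-primitive vectors in $F_1$. I expect the main obstacle to be the bookkeeping in the middle-sum cancellation (matching signs and index ranges so that the ``column-$v$'' contribution from Lemma \ref{4'} and the ``$\delta_{v,k}$'' contribution from Lemma \ref{1'''} align across the sum over $k$); the remaining steps are routine applications of the formulas from Section 2.
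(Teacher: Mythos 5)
Your argument follows the same route as the paper's: the weight computation, the Leibniz expansion of $(\pi^+_i)_{uv}D$ using Lemmas \ref{4'}, \ref{1'''} and \ref{2'''}, the two-term cancellation between the $k=u$ and $k=v$ contributions in the middle sum, and the appeal to Lemma \ref{first-floor} plus distinctness of the weights $\la_{i,1}$ for completeness in characteristic zero. The cancellation you describe is exactly the one in the paper (which checks it only for the simple-root operators $_{l+1,l}D$ and lets the general $u>v$ case follow; doing all $u>v$ at once, as you propose, is fine and even cleaner).

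There are two points you omit, one of which is a genuine gap. First, the statement asserts that $\pi^+_i$ lies in $F_1$, and this is not automatic: $f^+_i=v^+/D^+(1,\ldots,i)$ is a priori only a rational function, so one must show that each coefficient $v^+D^+(1,\ldots,i-1,k)/D^+(1,\ldots,i)$ actually lies in $H^0_{GL(m)}(\la^+)$. This is precisely where the hypothesis $\la^+_i\neq\la^+_{i+1}$ (resp.\ $\la^+_m\neq 0$) enters: it guarantees that $v^+$ contains at least one factor $D^+(1,\ldots,i)$, so that each coefficient is the bideterminant $B_+(1^{\la^+_1}\ldots i^{\la^+_i-1}k(i+1)^{\la^+_{i+1}}\ldots m^{\la^+_m})$ of shape $\la^+$. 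Your proposal never invokes this hypothesis in the primitivity part, which is the symptom of the omission. Second, since the first assertion of the lemma is made over a field of arbitrary characteristic $p\neq 2$, primitivity with respect to the algebraic group $B_{ev}$ requires annihilation by the divided powers $_{uv}^{(r)}D$ for all $r>0$, not only by $_{uv}D$; this does follow from your exact vanishing $(\pi^+_i)_{uv}D=0$ by computing in the $\mathbb{Z}$-form and reducing modulo $p$, but it needs to be said, as the paper does.
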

\begin{proof}
It is clear that $(f^+_i)_{l+1,l}D=0$ for every $l=1, \ldots, m-1$.

We will verify that $(g^+_i)_{l+1,l}D=0$ for every $l=1, \ldots m-1$.
By Lemma \ref{4} we infer that $g^+_i=\sum_{k=i}^m D^+(1, \ldots, i-1,k)y_{k,m+1}$. If $l\leq i-1$, then each $D^+(1, \ldots, i-1,k)_{l+1,l}D=0$
and each $(y_{k,m+1})_{l+1,l}D=0$ implies $(g^+_i)_{l+1,l}D=0$. If $l\geq i$, then $(g^+_i)_{l+1,l}D=D^+(1, \ldots, i-1,l)(-y_{l+1,m+1})+ D^+(1, \ldots, i-1,l)y_{l+1,m+1}=0$.

Therefore $(f^+_i)_{kl}D=0$, $(f^+_i)_{kl}D^{(r)}=0$ as well as $(g^+_i)_{kl}D=0$, $(g^+_i)_{kl}D^{(r)}=0$
for every $1\leq l<k\leq m$ and $r>0$.
For $n=1$ the group $GL(n)$ is one-dimensional torus and thus
$\pi^+_i=f^+_ig^+_i\phi(c_{m+1,m+1})^{\la^-_1}$ is a primitive $G_{ev}$-element.
Because every $v^+\frac{D^+(1, \ldots, i-1,k)}{D^+(1,\ldots, i)}= B_+(1^{\lambda^+_1} \ldots i^{\lambda^+_i-1}k(i+1)^{\lambda^+_{i+1}} \ldots m^{\lambda^+_m})$ 
is a bideterminant of shape $\lambda^+$, it belongs to $H^0_{GL(m)}(\la^+)$. This implies that $\pi^+_i$ belongs to $F_1$. 

If $char(K)=0$, then these vectors $\pi^+_i$ have different $G_{ev}$-weights and modules $H^0_{G_{ev}}(\pi^+_i)$ 
are composition factors in the filtration of $F_1$ from Lemma \ref{first-floor}, hence $\pi^+_i$ form a complete set of $G_{ev}$-primitive vectors in $F_1$ and $F_1$ is a direct sum 
of irreducible modules $H^0_{G_{ev}}(\pi^+_i)$.
\end{proof}

\subsection{Case $m=1$}

If $m=1$, then $v^+=D^+(1)^{\la^+_1}=c_{1,1}^{\la^+_1}$.

Denote $f^-_1=v^-$, $g^-_1=y_{m,m+1}$ and for $1< j \leq n$ such that $\lambda^-_{j-1}-\lambda^-_{j}\neq 0$ denote $f^-_j=\frac{v^-}{D^-(m+1,\ldots, m+j-1)}$ and 
\[g^-_j=\sum_{k=1}^{j} (-1)^{k+j} D^-(m+1, \ldots, \widehat{m+k}, \ldots, m+j)y_{m,m+k}.\]

If $f^-_j$ is defined, then the conditions $\lambda^-_{j-1}-\lambda^-_{j}> 0$ for $1< j \leq n$ imply that $f^-_j$ is a product of determinants $D^-(m+1,\ldots, m+i)$. 

In order to simplify the notation, define $D^-(m+1, \ldots, m+j-1)=1$ for $j=1$ and $D^-(m+1, \ldots, \widehat{m+k},\ldots, m+j)=1$ for $k=j=1$.

\begin{lm}\label{m=1} Assume $1\leq j\leq n$ is such that $\la^-_{j-1}\neq \la^-_{j}$ if $j>1$.
Then the vector
\[\pi^-_j=c_{11}^{\lambda^+_1}\frac{v^-}{D^-(m+1,\ldots, m+j-1)}\sum_{k=1}^{j} (-1)^{k+j} D^-(m+1, \ldots, \widehat{m+k},\ldots, m+j)y_{1,m+k}\] 
is a $G_{ev}$-primitive vector of $F_1$.

If $char(K)=0$, then the set of all vectors $\pi^-_j$ as above forms a complete set of $G_{ev}$-primitive vectors in $F_1$. 
Consequently, $F_1$ is a direct sum of irreducible $G_{ev}$-modules with highest vectors $\pi^-_j$. 
\end{lm}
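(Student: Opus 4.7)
The plan is to mirror the proof of Lemma \ref{n=1} with the roles of $GL(m)$ and $GL(n)$ interchanged. Since $m = 1$ the group $GL(m) = GL(1)$ is just a torus, so the $B_{ev}$-primitivity of $\pi_j^-$ reduces to verifying $(\pi_j^-)_{l+1,l}D = 0$ for $l \in \{m+1,\ldots,m+n-1\}$, corresponding to the positive root vectors of $GL(n)$. Writing $\pi_j^- = c_{11}^{\la^+_1} f_j^- g_j^-$ and applying the Leibniz rule for the even superderivation $_{l+1,l}D$, it suffices to show that each of the three factors is annihilated separately.

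Two of the three factors are easy. Lemma \ref{1'} gives $(c_{11})_{l+1,l}D = \delta_{1,l+1} c_{1,l} = 0$ since $l+1 \geq m+2 = 3$, hence $c_{11}^{\la^+_1}$ is annihilated. For $f_j^-$, the factor $D^-(m+1,\ldots,m+j-1)$ in the denominator cancels against the corresponding factor of $v^-$ (which has exponent $\la^-_{j-1} - \la^-_j \geq 1$ by hypothesis), so $f_j^-$ is a product of initial-segment determinants $D^-(m+1,\ldots,m+t)$. Each such factor is annihilated by Lemma \ref{3'}: either $l+1 \notin \{m+1,\ldots,m+t\}$ and the derivative is zero, or $l+1$ is in the column set and gets replaced by the already-present $l$, giving a determinant with two equal columns.

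The main step is to show $(g_j^-)_{l+1,l}D = 0$. The key observation is that $g_j^-$ is the Laplace expansion along the bottom row of the $j \times j$ determinant
\[
M = \det\begin{pmatrix}
\phi(c_{m+1,m+1}) & \cdots & \phi(c_{m+1,m+j}) \\
\vdots & & \vdots \\
\phi(c_{m+j-1,m+1}) & \cdots & \phi(c_{m+j-1,m+j}) \\
y_{1,m+1} & \cdots & y_{1,m+j}
\end{pmatrix}.
\]
Applying $_{l+1,l}D$ by multilinearity in columns, together with Lemmas \ref{2'} and \ref{1''}, only the column of index $l+1$ yields a nonzero derivative, and the differentiated column is pointwise identical to the column of index $l$ already present in $M$. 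Hence the result is a determinant with two equal columns, which vanishes.

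It remains to check that $\pi_j^-$ lies in $F_1$ and to settle completeness. Expanding $\pi_j^- = \sum_{k=1}^j (-1)^{k+j}(c_{11}^{\la^+_1} f_j^- D_k) \otimes y_{1,m+k}$, each coefficient $c_{11}^{\la^+_1} f_j^- D_k$ is a product of column determinants of exactly the column lengths prescribed by $\la$ (one of the $\la^-_{j-1} - \la^-_j$ columns of length $j-1$ being $D_k$ instead of the standard column), hence a bideterminant of shape $\la$ lying in $H^0_{G_{ev}}(\la)$; a direct weight count shows each term contributes the common weight $\la_{1,j}$. For the completeness statement in characteristic zero, Lemma \ref{first-floor} gives $F_1 = \bigoplus_j H^0_{G_{ev}}(\la_{1,j})$ ranging over valid $j$ (with $m=1$ only $i=1$ is permitted), and since the $\pi_j^-$ are primitive vectors of pairwise distinct weights $\la_{1,j}$ they must coincide with the highest-weight vectors of these summands, yielding a complete set. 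The main obstacle is precisely the vanishing $(g_j^-)_{l+1,l}D = 0$, but the determinantal rewriting of $g_j^-$ reduces it to a transparent two-equal-columns argument.
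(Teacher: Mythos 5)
Your proof is correct and follows the paper's architecture for Lemma \ref{m=1} almost exactly: factor $\pi_j^-$ as $c_{11}^{\la^+_1}f_j^-g_j^-$, kill each factor with the simple positive-root derivations $_{l+1,l}D$ of $GL(n)$, use that $GL(1)$ is a torus, verify membership in $F_1$ by recognizing the coefficients of the $y_{1,m+k}$ as bideterminants of shape $\la^-$, and get completeness in characteristic zero from Lemma \ref{first-floor} together with the distinctness of the weights $\la_{1,j}$. The one place you genuinely depart from the paper is the key vanishing $(g_j^-)_{l+1,l}D=0$: the paper expands the derivative directly via Lemmas \ref{1''} and \ref{3'}, isolates the two surviving terms, and cancels them by an explicit sign computation, whereas you rewrite $g_j^-$ as the Laplace expansion along the bottom row of a $j\times j$ determinant bordered by the row $(y_{1,m+1},\ldots,y_{1,m+j})$ and observe that differentiation produces a repeated column. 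The two arguments are computationally equivalent (the paper's two canceling terms are exactly the expansion of your degenerate determinant), but your packaging makes the sign bookkeeping automatic and is the cleaner way to see it; it is legitimate here because each monomial of that determinant contains exactly one odd factor, so multilinearity and the alternating property hold as in the commutative case. One trivial slip: the identity $(c_{11})_{l+1,l}D=\delta_{1,l+1}c_{1,l}=0$ is just the defining formula for the superderivation, not Lemma \ref{1'}, which concerns the elements $y_{ij}$.
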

\begin{proof}
It is clear that $(f^-_j)_{l+1,l}D=0$ for every $l=m+1, \ldots m+n-1$ and $(g^-_j)_{l+1,l}D=0$ for $l>j+1$.

If $m<l\leq j+1$, then Lemma \ref{1''} and \ref{3'} imply that 
\[(g^-_j)_{l+1,l}D=(-1)^{l+j+1}D^-(m+1, \ldots, \widehat{l+1}, \ldots, m+j)(y_{1,l+1})_{l+1,l}D\]
\[+(-1)^{l+j}(D^-(m+1, \ldots, \hat{l}, \ldots, m+j))_{l+1,l}Dy_{1,l}\]
\[=(-1)^{l+j+1}D^-(m+1, \ldots, \widehat{l+1}, \ldots, m+j)y_{1,l}+\]
\[(-1)^{l+j}D^-(m+1, \ldots, \widehat{l+1}, \ldots, m+j)y_{1,l}=0.\]

Therefore $(f^-_j)_{kl}D=0$, $(f^-_j)_{kl}D^{(r)}=0$ as well as $(g^-_j)_{kl}D=0$, $(g^-_j)_{kl}D^{(r)}=0$
for every $m+1\leq l<k\leq m+n$ and $r>0$.
For $m=1$ the group $GL(m)$ is one-dimensional torus and thus $\pi^-_j=c_{11}^{\lambda^+_1}f^-_jg^-_j$ is a primitive $G_{ev}$-element. 
Every $v^-\frac{D^-(m+1, \ldots, \widehat{m+k},\ldots, m+j)}{D^-(m+1,\ldots, m+j-1)}= \phi(B_-(J_{jk}))$,
where $J_{jk}$ is such that $T^-(J_{jk})$ is obtained from $T^-_{\lambda}$ by replacing one column with entries $m+1, \ldots, m+j-1$ with the column with entries
$m+1, \ldots, \widehat{m+k},\ldots, m+j$. Since each $v^-\frac{D^-(m+1, \ldots, \widehat{m+k},\ldots, m+j)}{D^-(m+1,\ldots, m+j-1)}$ 
is a bideterminant of shape $\lambda^-$, it belongs to $H^0_{GL(n)}(\la^-)$. This implies that $\pi^-_i$ belongs to $F_1$. 

If $char(K)=0$, then these vectors $\pi^-_j$ have different $G_{ev}$-weights and modules $H^0_{G_{ev}}(\pi^-_j)$ 
are composition factors in the filtration of $F_1$ from Lemma \ref{first-floor}, hence $\pi^-_j$ form a complete set of $G_{ev}$-primitive vectors in $F_1$ and $F_1$ is a direct sum 
of irreducible modules $H^0_{G_{ev}}(\pi^-_j)$.
\end{proof} 

\subsection{General case}

To deal with the general case, we first describe explicitly the isomorphism $V_m^*\otimes V_n \simeq Y$ of $G_{ev}$-modules discussed earlier. Let $(v^+_m)^*$ be the highest vector of 
$V_m^*$ and define inductively $(v^+_k)^*= (v^+_{k+1})^*_{k+1,k}D$ for $k=m-1, \ldots , 1$. Let $v_n$ be the highest vector of $V_n$ and define 
inductively $v^-_k=(v^-_{k+1})_{k+1,k}D$
for $k=n-1, \ldots , 1$. Using Lemmas \ref{1''} and \ref{1'''} we verify that the isomorphism $V_m^*\otimes V_n \simeq Y$ is given via 
$(v^+_i)^*\otimes v^-_j \mapsto (-1)^{m+i} y_{i,m+j}$.

Recall our previous convention that $D^-(m+1, \ldots, m+j-1)=1$ for $j=1$ and $D^-(m+1, \ldots, \widehat{m+k},\ldots, m+j)=1$ for $k=j=1$.

\begin{pr}\label{prim in F1} Assume $1\leq i <m$ and $\la^+_i\neq \la^+_{i+1}$, or $i=m$ and $\la^+_i\neq 0$. 
Assume $1\leq j\leq n$ is such that $\la^-_{j-1}\neq \la^-_{j}$ if $j>1$.
Then the vector
\[\pi_{ij}=\frac{v}{D^+(1,\ldots, i)D^-(m+1,\ldots, m+j-1)}\sum_{r=i}^m D^+(1, \ldots, i-1,r)\]
\[\sum_{s=1}^{j} (-1)^{s+j} D^-(m+1, \ldots, \widehat{m+s},\ldots, m+j)y_{r,m+s}\]
is a $G_{ev}$-primitive vector of $F_1$.

If $char(K)=0$, then the set of all vectors $\pi_{ij}$ as above forms a complete set of $G_{ev}$-primitive vectors in $F_1$.
Consequently, $F_1$ is a direct sum of irreducible $G_{ev}$-modules with highest vectors $\pi_{ij}$.
\end{pr}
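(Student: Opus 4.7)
The plan is to reduce the general case to the two special cases Lemma \ref{n=1} and Lemma \ref{m=1} by exploiting the factorization $G_{ev} = GL(m)\times GL(n)$. The Borel $B_{ev}$ is generated by the simple root superderivations $_{l+1,l}D$ for $1\le l<m$ and for $m+1\le l<m+n$, and we will verify that each such operator annihilates $\pi_{ij}$. The key observation is that the two groups of operators act independently: operators with indices in $\{1,\ldots,m\}$ see only the ``$+$'' ingredients (leaving $D^-$'s and the $v^-$-factor invariant by Lemma \ref{3'''}), and operators with indices in $\{m+1,\ldots,m+n\}$ see only the ``$-$'' ingredients (leaving $D^+$'s invariant by Lemma \ref{4''}).

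First I would verify that the weight of $\pi_{ij}$ is $\lambda_{ij}=\lambda-\delta^+_i+\delta^-_j$: every $y_{r,m+s}$ contributes $\delta^-_s-\delta^+_r$, the quotient $D^+(1,\ldots,i-1,r)/D^+(1,\ldots,i)$ contributes $\delta^+_r-\delta^+_i$, and $D^-(m+1,\ldots,\widehat{m+s},\ldots,m+j)/D^-(m+1,\ldots,m+j-1)$ contributes $\delta^-_j-\delta^-_s$, so all terms have weight $\lambda+\delta^-_j-\delta^+_i$. Then for primitivity under $_{kl}D$ with $1\le l<k\le m$, the $v^-$ and all $D^-$ factors are killed by Lemmas \ref{3'''}, so the action reduces, for each fixed $s$, to the action of $_{kl}D$ on
\[\frac{v^+}{D^+(1,\ldots,i)}\sum_{r=i}^m D^+(1,\ldots,i-1,r)\,y_{r,m+s}.\]
This is exactly the expression appearing in Lemma \ref{n=1} with $m+1$ replaced by $m+s$, and its proof (together with Lemmas \ref{4}, \ref{4'} and \ref{1'''}) applies verbatim to give zero. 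The case $m+1\le l<k\le m+n$ is handled analogously: all $D^+$-factors and $v^+$ are killed by Lemma \ref{4''}, and for each fixed $r$ one is left with exactly the expression appearing in Lemma \ref{m=1} (with the role of $c_{11}^{\lambda^+_1}$ played by $v^+D^+(1,\ldots,i-1,r)/D^+(1,\ldots,i)$ and the index $1$ replaced by $r$), so the same proof yields zero.

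For completeness in characteristic zero, Lemma \ref{first-floor} identifies $F_1$ with a direct sum $\bigoplus H^0_{G_{ev}}(\lambda_{ij})$ taken over exactly those pairs $(i,j)$ allowed by the hypothesis, each appearing with multiplicity one. Because the weights $\lambda_{ij}$ are all distinct, each simple summand contributes a one-dimensional space of primitive $G_{ev}$-vectors. It therefore suffices to show $\pi_{ij}\neq 0$, which follows by inspecting the summand $(r,s)=(i,j)$: after the cancellation $D^+(1,\ldots,i-1,i)=D^+(1,\ldots,i)$, $D^-(m+1,\ldots,\widehat{m+j},\ldots,m+j)=D^-(m+1,\ldots,m+j-1)$, and $(-1)^{2j}=1$, this term equals $v\,y_{i,m+j}$, and linear independence of the $y_{r,m+s}$ in $Y$ prevents it from being cancelled by the other terms.

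The main technical obstacle I anticipate is the bookkeeping when separating the two actions: one must be careful with signs and with the different formulas (Lemmas \ref{1'}--\ref{1'''} and \ref{3}--\ref{3'''}) that describe how $_{kl}D$ acts on the mixed variables $y_{r,m+s}$ depending on which block $k,l$ lie in. Once this bookkeeping is in place, the proof reduces mechanically to the two one-variable cases already established.
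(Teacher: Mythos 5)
Your proposal is correct and follows essentially the same route as the paper: both reduce to the $n=1$ and $m=1$ cases of Lemmas \ref{n=1} and \ref{m=1} via the factorization $H^0_{G_{ev}}(\la)\otimes Y\cong\bigl(H^0_{\GL(m)}(\la^+)\otimes V_m^*\bigr)\otimes\bigl(H^0_{\GL(n)}(\la^-)\otimes V_n\bigr)$, and both obtain completeness in characteristic zero from Lemma \ref{first-floor} together with the distinctness of the weights $\la_{ij}$. The only caveat is that primitivity also requires annihilation by the divided powers $_{kl}^{(r)}D$, not just the simple root operators, but this is covered by the same computations in the lemmas you cite.
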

\begin{proof}
The primitive vectors of the $\GL(m)$-module $H^0_{\GL(m)}(\la_+)\otimes (V^+_m)^*$ correspond to primitive vectors of the $\GL(m)\times \GL(1)$-supermodule
$H^0_{\GL(m|1)_{ev}}(\la)\otimes Y_{\GL(m|1)}$, where $Y_{\GL(m|1)}$ is the previously defined $Y$ in the special case $n=1$.
 Using Lemma \ref{n=1} we infer that they are given as
\[\frac{v^+}{D^+(1,\ldots, i)}\sum_{r=i}^m D^+(1, \ldots, i-1,r)(v^+_r)^*(-1)^{m+r}\]
whenever $1\leq i <m$ and $\la^+_i\neq \la^+_{i+1}$, or $i=m$ and $\la^+_i\neq 0$. 

The primitive vectors of the $\GL(n)$-module $H^0_{\GL(n)}(\la_-)\otimes (V^-_n)$ correspond to primitive vectors of the $\GL(1)\times \GL(n)$-supermodule
$H^0_{\GL(1|n)_{ev}}(\la)\otimes Y_{\GL(1|n)}$, where $Y_{\GL(1|n)}$ is the previously defined $Y$ in the special case $m=1$.
Using Lemma \ref{m=1} we infer that they are given as
\[\frac{v^-}{D^-(m+1,\ldots, m+j-1)}\sum_{s=1}^{j} (-1)^{s+j} D^-(m+1, \ldots, \widehat{m+s},\ldots, m+j)v^-_s\]
whenever $1\leq j\leq n$ is such that $\la^-_{j-1}\neq \la^-_{j}$ if $j>1$.

The corresponding tensor product 
\[(\frac{v^+}{D^+(1,\ldots, i)}\sum_{r=i}^m D^+(1, \ldots, i-1,r)(v^+_r)^*(-1)^{m+r})\]
\[\otimes(\frac{v^-}{D^-(m+1,\ldots, m+j-1)}
\sum_{s=1}^{j} (-1)^{s+j} D^-(m+1, \ldots, \widehat{m+s},\ldots, m+j)v^-_s)\]
is a $G_{ev}$-primitive element of $H^0_{G_{ev}}(\la)\otimes Y$ which is identified with $\pi_{ij}\in F_1$.

If $char(K)=0$, then these vectors $\pi_{ij}$ have different $G_{ev}$-weights and modules $H^0_{G_{ev}}(\pi_{ij})$ 
are composition factors in the filtration of $F_1$ from Lemma \ref{first-floor}, hence $\pi_{ij}$ form a complete set of $G{ev}$-primitive vectors in $F_1$ and $F_1$ is a direct sum 
of irreducible modules $H^0_{G_{ev}}(\pi_{ij})$.
\end{proof}

\subsection{Images of primitive vectors under $\phi_1$}

Consider the map $\phi_1:F_1 \to F_1$ defined as 
$\phi_1(w\otimes y_{ij})=(w)_{ij}D$ for $w\in H^0_{G_{ev}}(\la)$ for $1\leq i\leq m$ and $m+1\leq j \leq m+n$.

\begin{lm}\label{mor1}
The map $\phi_1$ is a morphism of $G_{ev}$-modules.
\end{lm}
\begin{proof}
We will check that $\phi_1$ preserves the action of even superderivations $_{kl}D$.

Assume either $1\leq k,l \leq m$ or $m+1\leq k,l \leq m+n$.
Then $(\phi_1(w\otimes y_{ij}))_{kl}D = (w)_{ij}D_{kl}D $. On the other hand, Lemmas \ref{1''} and \ref{1'''} imply
\[(w\otimes y_{ij})_{kl}D= (w)_{kl}D\otimes y_{ij} - \delta_{li} w\otimes y_{kj}+\delta_{jk} w \otimes y_{il}.\]
Therefore 
\[\phi_1((w\otimes y_{ij})_{kl}D)=(w)_{kl}D_{ij}D- \delta_{li} (w)_{kj}D+\delta_{jk} (w)_{il}D.\]
Since $_{ij}D_{kl}D-\,_{kl}D_{ij}D=\delta_{jk} \, _{il}D -\delta_{li} \, _{kj}D$, we conclude that 
$(\phi_1(w\otimes y_{ij}))_{kl}D = \phi_1((w\otimes y_{ij})_{kl}D)$.
\end{proof}

To compute images of $\pi_{ij}$ under $\phi_1$ we will use Lemmas \ref{3} and \ref{4} repeatedly.

Denote by $\lhd$ the usual ordering on weights for $G_{ev}$ such that $\mu \lhd \lambda$ if and only if $\lambda-\mu$ is a sum of 
simple roots of $G_{ev}$.
The vector $\pi_{ij}$ is a sum of the term $vy_{i,m+j}$, that we will call the leading term, and other terms that are multiples of $w_{rs}\otimes y_{r,m+s}$, 
where the weight $\mu$ of $w_{rs}\in H^0_{G_{ev}}(\la)$ satisfies $\mu\neq \la$ and $\mu\lhd\la$, that we will call lower terms.  
Hence $\pi_{ij}=vy_{i,m+j}+$ lower terms. 

\begin{pr}\label{phi1}
Assume $1\leq i <m$ and $\la^+_i\neq \la^+_{i+1}$, or $i=m$ and $\la^+_i\neq 0$; and $1\leq j\leq n$ is such that $\la^-_{j-1}\neq \la^-_{j}$ if $j>1$.
Then $\phi_1(\pi_{ij})=\omega_{ij}\pi_{ij}$.
\end{pr}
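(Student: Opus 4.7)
My plan is to reduce the identity to a one-coefficient calculation using the $G_{ev}$-equivariance of $\phi_1$, and then identify that coefficient as $\omega_{ij}(\la)$ via Theorem \ref{gen}. By Lemma \ref{mor1}, $\phi_1(\pi_{ij})$ is $G_{ev}$-primitive, and since $\phi_1$ preserves weights it lies in the $\la_{ij}$-weight space of $F_1$. In characteristic zero, Lemma \ref{first-floor} combined with Proposition \ref{prim in F1} forces $\phi_1(\pi_{ij})=c_{ij}\pi_{ij}$ for a unique scalar $c_{ij}$. Since everything sits inside the $\mathbb{Z}$-form $\mathbb{C}[G]_{\mathbb{Z}}$, determining $c_{ij}$ in characteristic zero suffices, and the identity then passes through modular reduction to positive characteristic.

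To pin down $c_{ij}$ I would extract the coefficient of the distinguished monomial $v\otimes y_{i,m+j}$ on both sides. In $\pi_{ij}$ this coefficient equals $1$, obtained only at $r=i,s=j$ where the bideterminant quotients collapse, so $c_{ij}$ equals the coefficient of $v\otimes y_{i,m+j}$ in $\phi_1(\pi_{ij})$. Writing each summand of $\pi_{ij}$ as $(-1)^{s+j}P^+_rQ^-_s\otimes y_{r,m+s}$ with
\[P^+_r=\frac{v^+D^+(1,\ldots,i-1,r)}{D^+(1,\ldots,i)},\qquad Q^-_s=\frac{v^-D^-(m+1,\ldots,\widehat{m+s},\ldots,m+j)}{D^-(m+1,\ldots,m+j-1)},\]
and invoking Theorem \ref{gen}, I observe that only two families of summands can contribute to the monomial $y_{i,m+j}$: those with $a=i,s=j$, where the factor $Q^-_j$ collapses to $v^-$ and one computes $(P^+_r)_{r,i}D$; and those with $r=i,b=m+j$, where $P^+_i$ collapses to $v^+$ and one computes $(Q^-_s)_{m+j,m+s}D$.

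In case (A), Lemma \ref{4'} together with the product rule shows that $_{r,i}D$ annihilates every factor of $P^+_r$ except the single $D^+(1,\ldots,i-1,r)$ (the remaining $D^+(1,\ldots,a)$ factors either contain no column $r$ or produce the repeated column $i$ upon substitution), so the surviving term sends $D^+(1,\ldots,i-1,r)$ to $D^+(1,\ldots,i)$ and gives $(P^+_r)_{r,i}D=v^+$ for every $r>i$, while $(P^+_i)_{i,i}D=\la^+_i v^+$ by the weight action; this sums to $(\la^+_i+m-i)v^+$. Case (B) is parallel, relying on Lemma \ref{3'} together with the cyclic reordering identity $D^-(m+1,\ldots,\widehat{m+s},\ldots,m+j-1,m+s)=(-1)^{j-s-1}D^-(m+1,\ldots,m+j-1)$; it produces $(Q^-_s)_{m+j,m+s}D=(-1)^{j-s-1}v^-$ for $s<j$ and $\la^-_j v^-$ for $s=j$, which together with the outer sign $(-1)^{s+j}$ sums over $s$ to $(\la^-_j+1-j)v^-$. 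Adding the (A) and (B) contributions yields $\la^+_i+\la^-_j+m+1-i-j=\omega_{ij}(\la)$, as required.

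The main obstacle is the bookkeeping of the derivative expansions. One must verify carefully that no other $(r,s,a,b)$-configuration in the Theorem \ref{gen} decomposition conspires to yield an additional copy of the leading monomial $v\otimes y_{i,m+j}$ — all remaining contributions must involve strictly lower-weight bideterminants in place of $v$ — and the sign bookkeeping in case (B), particularly the cyclic rearrangement inside $D^-$, must be tracked precisely.
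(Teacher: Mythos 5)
Your proposal is correct and follows essentially the same route as the paper: extract the coefficient of the leading monomial $v\otimes y_{i,m+j}$ in $\phi_1(\pi_{ij})$ (getting $\la^+_i+\la^-_j$ from the $(r,s)=(i,j)$ term, $+1$ from each of the $m-i$ terms with $r>i$, $s=j$, and $-1$ from each of the $j-1$ terms with $r=i$, $s<j$), then invoke the $G_{ev}$-equivariance of $\phi_1$ and the primitivity of $\pi_{ij}$ to upgrade the leading-coefficient identity to $\phi_1(\pi_{ij})=\omega_{ij}\pi_{ij}$. Your added care about why $\phi_1(\pi_{ij})$ must be a scalar multiple of $\pi_{ij}$ (multiplicity one from Lemma \ref{first-floor}, plus modular reduction for $p>0$) and the explicit sign bookkeeping in case (B) merely make explicit what the paper leaves terse.
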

\begin{proof}
Using Lemmas \ref{3} and \ref{4} we compute $(v)_{i,m+j}D=(\la^+_i+\la^-_j)vy_{i,m+j}+$ lower terms.
For the image of $(w_{rs})_{r,m+s}D$, where 
\[w_{rs}=\frac{(-1)^{s+j}vD^+(1, \ldots, i-1, r)D^-(m+1, \ldots, \hat{s}, \ldots m+j)}{D^+(1, \ldots, i)D^-(m+1, \ldots, m+j-1)},\]
to contain a nonzero multiple of the leading term $vy_{i,m+j}$, we must have that 
\[(D^+(1, \ldots, i-1, r)D^-(m+1, \ldots, \hat{s}, \ldots m+j))_{r,m+s}D\]
is a multiple of $D^+(1, \ldots, i)D^-(m+1, \ldots,  m+j-1)$ and this can happen only if $r=i$ or $s=m+j$. 
If $r=i$ and $s<m+j$, then this coefficient equals $-1$; if $r>i$ and $s=m+j$, this coefficient equals $+1$.
Therefore $vy_{i,m+j}$ appears in $\phi_1(\pi_{ij})$ with the coefficient $\la^+_i+\la^-_j-(j-1)+(m-i)=\la^+_i+\la^-_j+m+1-i-j=\omega_{i,j}$.
Since $\pi_{ij}$ is a primitive $G_{ev}$-vector and the map $\phi_1$ is a $G_{ev}$-morphism by Lemma \ref{mor1}, the claim follows.
\end{proof}
 
\begin{cor} \label{cor}
Assume $1\leq i <m$ and $\la^+_i\neq \la^+_{i+1}$, or $i=m$ and $\la^+_i\neq 0$; and $1\leq j\leq n$ is such that $\la^-_{j-1}\neq \la^-_{j}$ if $j>1$.
If $char(K)=0$ and $\omega_{ij}=0$, then $H^0_G(\la)$ is not irreducible. If $char(K)=p>2$ and $\omega_{ij} \equiv 0 \pmod p$, 
then $H^0_G(\la)$ is not irreducible
\end{cor}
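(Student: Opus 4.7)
The plan is to use Proposition \ref{phi1} — saying $\phi_1(\pi_{ij}) = \omega_{ij}\pi_{ij}$ — together with an identification of $\operatorname{image}(\phi_1)$ with $L(\la) \cap F_1$, where $L(\la)\subseteq H^0_G(\la)$ denotes the $\Dist(G)$-submodule generated by the highest weight vector $v$ (i.e., the simple socle of $H^0_G(\la)$). Granted this identification, when $\omega_{ij}$ vanishes in the relevant characteristic, $\pi_{ij}$ lies in $\ker\phi_1$; a standard semisimplicity argument in characteristic zero, or a more delicate weight-space analysis in characteristic $p>2$, then shows $\pi_{ij}\notin \operatorname{image}(\phi_1) = L(\la)\cap F_1$. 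Hence $\pi_{ij}\notin L(\la)$, so $L(\la)\subsetneq H^0_G(\la)$, and $H^0_G(\la)$ cannot be irreducible.

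To establish $\operatorname{image}(\phi_1) \subseteq L(\la)\cap F_1$: expand $\xi = \sum w_{rs}y_{r,m+s}\in F_1$; each coefficient $w_{rs}$ lies in $F_0 = H^0_{G_{ev}}(\la)$, which is contained in $L(\la)$ because $v$ is the $G_{ev}$-highest weight vector generating the $G_{ev}$-irreducible $F_0$. Then $\phi_1(\xi) = \sum (w_{rs})_{r,m+s}D$ is a sum of odd superderivations applied to elements of $L(\la)$, so it lies in $L(\la)\cap F_1$. For the reverse inclusion, a PBW-type argument in $\Dist(G)$ — using that the odd root vectors $e_{m+j,i}$ square to zero, so products of more than one odd lowering land in floors $F_r$ with $r\ge 2$ — shows that $L(\la)\cap F_1$ is the $G_{ev}$-submodule of $F_1$ generated by the elements $(v)_{i,m+j}D = \phi_1(v\otimes y_{i,m+j})$. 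Since $\operatorname{image}(\phi_1)$ is $G_{ev}$-stable by Lemma \ref{mor1}, it contains this submodule.

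In characteristic zero, Lemma \ref{first-floor} gives a direct-sum decomposition $F_1 = \bigoplus_{(i',j')} H^0_{G_{ev}}(\la_{i'j'})$ in which each summand appears with multiplicity one; Schur's lemma combined with Proposition \ref{phi1} then shows that $\phi_1$ acts on $H^0_{G_{ev}}(\la_{i'j'})$ by the scalar $\omega_{i'j'}$, so $\operatorname{image}(\phi_1) = \bigoplus_{\omega_{i'j'}\ne 0} H^0_{G_{ev}}(\la_{i'j'})$, which misses $\pi_{ij}$ precisely when $\omega_{ij}=0$. In characteristic $p>2$ the direct sum is replaced by the good filtration from Lemma \ref{first-floor}, and one argues via $G_{ev}$-equivariance that the cyclic $G_{ev}$-submodule of $F_1$ generated by $\pi_{ij}$ is annihilated by $\phi_1$, then uses a careful weight-space analysis at $\la_{ij}$ to exclude $\pi_{ij}$ from $\operatorname{image}(\phi_1)$.

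The main obstacle is the characteristic-$p$ case, where $F_1$ is not semisimple and Schur's lemma is unavailable; here a finer analysis, comparing the leading coefficients of $\pi_{ij}$ and of $\phi_1(\xi)$ along the lines of the computation in the proof of Proposition \ref{phi1}, seems to be needed to conclusively rule out $\pi_{ij}\in\operatorname{image}(\phi_1)$.
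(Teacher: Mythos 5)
Your overall strategy matches the paper's --- identify $L_G(\la)\cap F_1$ with the image of $\phi_1$ via a PBW ordering of superderivations, then use Proposition \ref{phi1} to see that $\pi_{ij}$ is killed by $\phi_1$ when $\omega_{ij}$ vanishes --- but you leave a genuine gap at the decisive step, and you introduce an unnecessary (and in characteristic $p$ false) claim along the way. The gap: you try to prove reducibility by showing that the specific vector $\pi_{ij}$ does \emph{not} lie in $\operatorname{image}(\phi_1)$, and in characteristic $p>2$ you concede that this ``seems to need'' a finer analysis which you do not carry out. That step is never needed. Since $\phi_1$ is a linear endomorphism of the finite-dimensional space $F_1$ and has nonzero kernel (it kills $\pi_{ij}$), rank--nullity already forces $\operatorname{image}(\phi_1)$ to be a \emph{proper} subspace of $F_1$. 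Combined with the inclusion $L_G(\la)\cap F_1\subseteq \operatorname{image}(\phi_1)$, this gives $L_G(\la)\cap F_1\subsetneq F_1$, hence $L_G(\la)\neq H^0_G(\la)$, in both characteristics at once. This is exactly how the paper closes the argument; your characteristic-zero detour through Schur's lemma and the good filtration is correct but superfluous, and your characteristic-$p$ case is incomplete as written.

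A secondary issue: to get $\operatorname{image}(\phi_1)\subseteq L_G(\la)\cap F_1$ you assert that $F_0=H^0_{G_{ev}}(\la)$ is contained in $L_G(\la)$ because it is ``$G_{ev}$-irreducible'' and generated by $v$. In characteristic $p$ the module $H^0_{G_{ev}}(\la)$ is generally not irreducible, and the $G_{ev}$-submodule generated by $v$ is only $L_{G_{ev}}(\la)$, so this containment fails. Fortunately, for the conclusion you only need the \emph{other} inclusion, $L_G(\la)\cap F_1\subseteq\operatorname{image}(\phi_1)$: the PBW argument shows $L_G(\la)\cap F_1$ is the sum of the $G_{ev}$-modules generated by the vectors $(v)_{i,m+j}D=\phi_1(v\otimes y_{i,m+j})$, and by the $G_{ev}$-equivariance of $\phi_1$ (Lemma \ref{mor1}) each such module is $\phi_1\bigl(\Dist(G_{ev})(v\otimes y_{i,m+j})\bigr)\subseteq\operatorname{image}(\phi_1)$. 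Restructure the proof around that single inclusion plus the rank--nullity observation and both characteristic cases follow uniformly.
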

\begin{proof}
The simple $G$-module $L_G(\la)$ is the socle of $H^0_G(\la)$ and is generated by the highest weight vector $v$ of weight $\la$. We will show that 
the intersection $L_G(\la)\cap F_1$ equals the image of the map $\phi_1$. 
Use Poincare-Birkhoff-Witt theorem and order the superderivations in the following way. Start with $_{ij}D$ and $_{ij}^{(r)}D$ with $i>j$, continue with odd $_{ij}D$ with $i<j$ followed by even $_{ij}D$, $_{ij}^{(r)}D$ for $i<j$ and $\binom{_{ii}D}{r}$.
The vector $v$ is annihilated by $_{ij}D$ and $_{ij}^{(r)}D$ where $i>j$. Each odd superderivation $_{ij}D$, where $i<j$, sends a vector from floor $F_k$ to the next floor $F_{k+1}$ and each even $_{ij}D$, $_{ij}^{(r)}D$ for $i<j$ and $\binom{_{ii}D}{r}$ sends $F_k$ to itself. Therefore $L_G(\la)\cap F_1$ is the sum of $G_{ev}$-modules generated by $(v)_{ij}D$ for
$1\leq i\leq m$ and $m+1\leq j \leq m+n$. Using Lemma \ref{mor1} we conclude that this is exactly the image of the map $\phi_1$.

If $char(K)=0$ and $\omega_{ij}=0$, or $char(K)=p>2$ and $\omega_{ij} \equiv 0 \pmod p$, then $\phi_1(\pi_{ij})=0$ by Proposition \ref{phi1}. 
Since $\phi_1$ is not injective, its image is a proper subset of $F_1$. Therefore $H^0_G(\la)$ is not irreducible.
\end{proof}

\begin{rem}
The above corollary is a special case of the statement characterizing irreducible induced modules $H^0_G(\la)$ in arbitrary characteristic. In the upcoming paper we will show that 
$H^0_G(\la)$ is irreducible if and only if $\la$ is typical.

\end{rem}

\section{Primitive vectors in $H^0_{G_{ev}}(\la) \otimes \wedge^k Y$}\label{s4}

The purpose of this section is to investigate advantages and limitations of extending the previous construction of $G_{ev}$-primitive vectors to higher floors $F_k$.  

In this section we will assume that indices $i$ take on values from the set $\{1, \ldots, m\}$ and indices $j$ take on values from the set 
$\{1, \ldots, n\}$.

\subsection{Primitive vectors in $F^\otimes_k=H^0_{G_{ev}}(\la) \otimes Y^{\otimes k}$}

Assume that $(I|J)=(i_1\ldots i_k|j_1\ldots j_k)$ is a multi-index such that $1\leq i_1, \ldots, i_k \leq m$ and $1\leq j_1, \ldots, j_k\leq n$. 
Define the content $cont(I|J)$ of $(I|J)$ to be the $(m+n)$-tuple 
$cont(I|J)=(x^+_1, \ldots, x^+_m|x^-_{1}, \ldots, x^-_{n})$, where 
$x^+_s$ is the negative of the number of occurences of the symbol $s$ in $i_1\ldots i_k$ and 
$x^-_t$ is the number of occurences of the symbol $t$ in $j_1\ldots j_k$. 
Further, denote
\[\la_{I|J}=\la-\sum_{s=1}^k \delta^+_{i_s}+\sum_{s=1}^k \delta^-_{j_s}.\]

For each $1\leq i \leq m$ and $1\leq j\leq n$ denote by $\rho_{i|j}$ the following element 
\[\sum_{r=i}^m D^+(1, \ldots, i-1,r)\sum_{s=1}^{j} (-1)^{s+j} D^-(m+1, \ldots, \widehat{m+s},\ldots, m+j)y_{r,m+s}\]
of $A(m|n)\otimes Y$. Here we set $D^-(m+1, \ldots, \widehat{m+s},\ldots, m+j)=1$ for $s=j=1$.
For each $(I|J)=(i_1\ldots i_k|j_1\ldots j_k)$ such that $1\leq i_1, \ldots, i_k \leq m$ and $1\leq j_1, \ldots, j_k\leq n$
denote $\rho_{I|J}=\otimes_{s=1}^k \rho_{i_s|j_s}$. We will abuse the notation and consider $\rho_{I|J}$ as an element of $A(m|n)\otimes Y^{\otimes k}$
by changing the order of terms in $\otimes_{s=1}^k \rho_{i_s|j_s}$.

Consider the following elements
\[v_{I|J}=\frac{v}{\prod_{s=1}^k D^+(1, \ldots, i_s)\prod_{s=1}^k D^-(m+1,\ldots, m+j_s-1)},\]
where we set $D^-(m+1,\ldots, m+j_s-1)=1$ for $j_s=1$.

By definition, $v_{I|J}$ belongs to the quotient field $K(m|n)$ of the superalgebra $A(m|n)$. The element $v_{I|J}$ belongs to $A(m|n)$ if and only if 
$\la^+_s-\la^+_{s+1}\geq -cont(I|J)^+_s$ for $s=1, \ldots, m-1$, 
$\la^+_m\geq -cont(I|J)^+_m$,
and 
$\la^-_{t-1}-\la^-_{t}\geq cont(I|J)^-_t$ for $t=2, \ldots, n$.
In other words, the symbol $i_s<m$ appears at most $\la^+_{i_s}-\la^+_{i_s+1}$ times in $I$,
symbol $m$ appears at most $\la^+_m$ times in $I$; and 
symbol $j_t>1$ appears at most $\la^-_{j_t-1}-\la^-_{j_t}$ times in $J$.

The weight $\la$ will be called {\it $(I|J)$-robust} if $v_{I|J}\in A(m|n)$.  Clearly, if $cont(K|L)=cont(I|J)$, then $\la$ is $(I|J)$-robust if and only if it is $(K|L)$-robust. 

Finally, denote \[\pi_{I|J}=v_{I|J}\rho_{I|J}\] an element of weight $\la_{I|J}$. 
The element $\pi_{I|J}$ belongs to $K(m|n)\otimes Y^{\otimes k}$ and does not necessarily lie in $F^\otimes_k$. 
However, if $\la$ is $(I|J)$-robust, then $\la_{I|J}$ is dominant and $\pi_{I|J}\in F^\otimes_k$.

\begin{lm}\label{Fotimes}
Assume the element $\pi_{I|J}$ costructed above is such that $\la_{I|J}$ is dominant and $\pi_{I|J}\in F^\otimes_k$. Then $\pi_{I|J}$ is $G_{ev}$-primitive element of $F^\otimes_k$.
In particular, this is the case when $\la$ is $(I|J)$-robust.
\end{lm}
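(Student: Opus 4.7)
The plan is to induct on $k = |I| = |J|$, with the base case provided by Proposition \ref{prim in F1} (or trivially $\pi_{\emptyset|\emptyset} = v$ for $k=0$). First I would verify that the weight of $\pi_{I|J}$ equals $\la_{I|J}$: in each monomial summand, the numerator factors $D^+(1,\ldots,i_s-1,r_s)$, $D^-(m+1,\ldots,\widehat{m+s_s'},\ldots,m+j_s)$ and $y_{r_s,m+s_s'}$ from $\rho_{I|J}$ combine with the denominators in $v_{I|J}$ to produce a net shift of $-\delta^+_{i_s}+\delta^-_{j_s}$ per index $s$, summing to $\la - \sum_s \delta^+_{i_s} + \sum_s \delta^-_{j_s} = \la_{I|J}$; dominance is given by hypothesis.

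For the inductive step on primitivity, write $I = I'\cdot i_k$, $J = J'\cdot j_k$, and introduce
\[
\tau = \frac{\rho_{i_k|j_k}}{D^+(1,\ldots,i_k)\, D^-(m+1,\ldots,m+j_k-1)},
\]
viewed inside $\tilde A \otimes Y$, where $\tilde A$ is an appropriate localization of $A(m|n)$. The factorizations $\pi_{I|J} = \pi_{I'|J'}\cdot\tau$ and $\pi_{i_k,j_k} = v\cdot\tau$ both hold in the relevant tensor product with $Y^{\otimes\bullet}$. Let $X = {}_{lk}D$ range over the positive even root derivations of $G_{ev}$ (so $l > k$, with both indices in $\{1,\ldots,m\}$ or both in $\{m+1,\ldots,m+n\}$), extended to $\tilde A$ by the quotient rule. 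The proofs of Lemmas \ref{n=1} and \ref{m=1} underlying Proposition \ref{prim in F1} actually establish $X^{(r)}(\pi_{i_k,j_k}) = 0$ for every $r \geq 1$, and $X^{(s)}(v) = 0$ for $s \geq 1$ since $v$ is the highest weight vector. The divided-powers Leibniz formula applied to $v\tau = \pi_{i_k,j_k}$ then collapses to $v\cdot X^{(r)}(\tau) = 0$; cancelling the invertible element $v$ in $\tilde A$ yields $X^{(r)}(\tau) = 0$. By induction $X^{(s)}(\pi_{I'|J'}) = 0$, and a second Leibniz application to $\pi_{I|J} = \pi_{I'|J'}\cdot\tau$ gives $X^{(r)}(\pi_{I|J}) = 0$ for every admissible $X$ and $r \geq 1$. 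Since $F^\otimes_k$ embeds in the localization, this equality descends back to $F^\otimes_k$ and establishes $G_{ev}$-primitivity.

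The main technical obstacle is the cancellation step, which requires $v$ to be a non-zero-divisor in the ambient supercommutative algebra (equivalently, invertible in a suitable localization). Because $v = \prod_a D^+(1,\ldots,a)^{\la^+_a-\la^+_{a+1}}\prod_b D^-(m+1,\ldots,m+b)^{\la^-_b-\la^-_{b+1}}$ is a nonzero product of even minors, one verifies this by decomposing $A(m|n)$ as a free module over the polynomial subalgebra in the even-indexed generators and checking that $v$ acts injectively fiberwise. The final assertion of the lemma, that $(I|J)$-robustness of $\la$ is sufficient, then follows by directly comparing the robustness inequalities with the conditions for $\pi_{I|J}\in F^\otimes_k$ and for dominance of $\la_{I|J}$.
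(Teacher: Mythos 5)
Your argument is correct in substance and lands on the same underlying mechanism as the paper: $\pi_{I|J}$ is a product of factors each of which is killed by the raising superderivations $_{lk}D$ ($l>k$, both indices in the same block) and their divided powers, so the Leibniz rule finishes the job. The paper's proof is a one-liner precisely because it takes the direct route: the computations in Lemmas \ref{n=1} and \ref{m=1} already show, as identities in the localized algebra and independently of any hypothesis on $\la$, that $v_{I|J}$ (a product of the $D^+(1,\ldots,a)^{\pm1}$ and $D^-(m+1,\ldots,m+b)^{\pm1}$, each separately annihilated) and each $\rho_{i_s|j_s}$ are killed by these operators. You instead recover the annihilation of your factor $\tau=\rho_{i_k|j_k}/(D^+(1,\ldots,i_k)D^-(m+1,\ldots,m+j_k-1))$ indirectly, from the already-proved primitivity of $\pi_{i_k j_k}=v\tau$, by cancelling $v$. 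That works, but it buys you nothing and costs you the non-zero-divisor argument, where your sketch is slightly off: $v$ does \emph{not} lie in the polynomial subalgebra on the even generators (the factors $D^-$ are built from $\phi(c_{kl})=c_{kl}-\sum_i c_{ki}y_{il}$ and so carry odd corrections), so multiplication by $v$ is not ``fiberwise'' for that free-module decomposition. The correct statement is that the component of $v$ of lowest exterior degree in the odd generators is a nonzero element of the (localized) polynomial subalgebra, hence a non-zero-divisor, and a lowest-degree-term argument then shows $v$ itself is a non-zero-divisor. Two further small points: your induction should be phrased (as you implicitly do) as proving the annihilation identities in $K(m|n)\otimes Y^{\otimes\bullet}$, since dominance of $\la_{I|J}$ and membership $\pi_{I|J}\in F^\otimes_k$ do not pass to the truncation $(I'|J')$; and your appeal to Lemmas \ref{n=1}--\ref{m=1} for $X^{(r)}(\pi_{i_kj_k})=0$ is legitimate only because those computations do not actually use the hypotheses $\la^+_{i_k}\neq\la^+_{i_k+1}$, $\la^-_{j_k-1}\neq\la^-_{j_k}$ --- worth saying explicitly, since the lemmas as stated assume them. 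With those repairs the proof is sound, though the paper's direct check is shorter.
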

\begin{proof}
As in the previous section, we check that $v_{I|J}$ and each $\rho_{ij}$, hence also $\rho_{I|J}$, are annihilated by $_{kl}D$ and $_{kl}D^{(r)}$ for every 
$1\leq k<l\leq m$ or $m+1\leq k<l \leq m+n$ and $r>1$. Therefore $\pi_{I|J}$ is a $G_{ev}$-primitive element of $F^\otimes_k$.
\end{proof}

\subsection{Primitive vectors in $F_k$}

To proceed to the space $F_k=H^0_{G_{ev}}(\la) \otimes (\wedge^k Y)$, we will consider the usual morphism $\otimes^k Y \to \wedge^k Y$ which induces the 
morphism $M:F^\otimes_k \to F_k$. Some images $M(\pi_{I|J})$ vanish and others coincide. We would like to choose primitive vectors 
in $F^\otimes_k$ in such a way that their images under $M$ form a basis of all primitive vectors generated by $M(\pi_{I|J})$. 
In doing so we can split the problem into individual weight spaces in $F_k$. Unlike the case of $F_1$, where the multiplicities of primitive vectors 
were simple, we will deal with many primitive vectors of the same weight, see \cite{gm1}.   

Assume $(I|J)=(i_1\ldots i_k|j_1\ldots j_k)$ is such that $\la_{I|J}$ is dominant,
$i_1\leq i_2 \ldots \leq i_k$ and $i_r=i_{r+1}$ implies $j_r<j_{r+1}$. Such $(I|J)$ will be called \emph{admissible}. 


\begin{lm}\label{exterior}
Let $(I|J)=(i_1\ldots i_k|j_1\ldots j_k)$ be admissible and $\pi_{I|J}\in F^{\otimes k}$.
Then $M(\pi_{I|J})$ is a nonzero $G_{ev}$-primitive vector in $F_k$.
\end{lm}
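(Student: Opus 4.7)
The plan is first to transfer $G_{ev}$-primitivity from $F^\otimes_k$ to $F_k$, and then to exhibit a nonzero coefficient in $M(\pi_{I|J})$. By Lemma \ref{Fotimes}, $\pi_{I|J}$ is $G_{ev}$-primitive in $F^\otimes_k$. The canonical projection $Y^{\otimes k}\to \wedge^k Y$ is a morphism of $G_{ev}$-modules, so tensoring with the identity on $H^0_{G_{ev}}(\la)$ shows that $M$ is $G_{ev}$-equivariant; consequently $M(\pi_{I|J})$ is either zero or $G_{ev}$-primitive in $F_k$. The only remaining task is to prove $M(\pi_{I|J})\neq 0$.

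To that end I would track the coefficient of the specific basis vector
\[
v\otimes(y_{i_1,m+j_1}\wedge y_{i_2,m+j_2}\wedge\ldots\wedge y_{i_k,m+j_k})
\]
of $F_k$; this is a legitimate basis vector because admissibility makes the pairs $(i_s,m+j_s)$ pairwise distinct and already lexicographically sorted. Expanding $\rho_{I|J}$ as a sum over tuples $(r_s,a_s)$ with $r_s\geq i_s$ and $1\leq a_s\leq j_s$, the distinguished ``leading'' choice $r_s=i_s$, $a_s=j_s$ for every $s$ contributes the scalar
\[
\prod_{s=1}^k D^+(1,\ldots,i_s)\,D^-(m+1,\ldots,m+j_s-1)
\]
(the signs $(-1)^{2j_s}$ collapse to $1$), which exactly cancels the denominator of $v_{I|J}$ and produces $v\otimes(y_{i_1,m+j_1}\otimes\ldots\otimes y_{i_k,m+j_k})$; under $M$ this maps to the chosen basis vector with coefficient $+1$.

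The technical heart of the argument is to verify that no other term in the expansion of $\pi_{I|J}$ can contribute to this basis vector. A competing tuple $(r_s,a_s)$ contributes to $y_{i_1,m+j_1}\wedge\ldots\wedge y_{i_k,m+j_k}$ (up to sign) only if the multiset $\{(r_s,a_s)\}$ equals $\{(i_s,j_s)\}$. Combining $r_s\geq i_s$ with the weak monotonicity $i_1\leq\ldots\leq i_k$, a backward induction starting at $s=k$ (using that $r_k$ lies in $\{i_1,\ldots,i_k\}$ and is at least $i_k=\max_s i_s$) forces $r_s=i_s$ for every $s$. With first coordinates pinned, admissibility supplies strictly increasing $j_s$'s inside each maximal run of equal $i$'s, and the same backward induction within each run, using $a_s\leq j_s$, forces $a_s=j_s$. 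Hence the leading term is the unique contributor, the coefficient of the chosen basis vector in $M(\pi_{I|J})$ equals $1$, and $M(\pi_{I|J})\neq 0$. The main obstacle is precisely this combinatorial uniqueness step; the admissibility hypothesis has been designed so that the two monotonicity inputs feed a single backward-induction argument and pin the indices down.
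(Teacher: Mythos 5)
Your proof is correct and follows the same route as the paper: $G_{ev}$-primitivity is inherited from Lemma \ref{Fotimes} via the $G_{ev}$-equivariance of $M$, and non-vanishing is read off from the leading term $v\otimes(y_{i_1,m+j_1}\wedge\ldots\wedge y_{i_k,m+j_k})$, whose survival is exactly what admissibility guarantees. The paper compresses the second half to one sentence (and records the leading-term observation only after the lemma, where it is reused for Lemma \ref{LI}); your backward-induction argument pinning $r_s=i_s$ and $a_s=j_s$ is a valid and welcome expansion of that step.
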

\begin{proof}
By Lemma \ref{Fotimes}, the element $M(\pi_{I|J})$ is $G_{ev}$-primitive vector in $F_k$. It is nonzero because $(I|J)$ is admissible.
\end{proof}

From now on we will work only with admissible $(I|J)$ and identify $M(\pi_{I|J})$ with $\pi_{I|J}$.

For an admissible $(I|J)=(i_1\ldots i_k|j_1\ldots j_k)$ define the height $ht(I|J)=\sum_{s=1}^k j_s-i_s$ and denote $y_{I|J}=y_{i_1,m+j_1} \ldots y_{i_k, m+j_k}$.

Each vector $x\in H^0_G(\la)$ of weight $\mu$ can be written as
$x=\sum_{(I|J)} x_{I|J}y_{I|J}$, where the sum is over admissible $(I|J)$, $x_{I|J}\in H^0_{G_{ev}}(\la)$ is of weight $\gamma_{I|J} \lhd \la$ and $cont(I|J)+\gamma_{I|J}=\mu$.

The primitive vector $\pi_{I|J}$ has a unique leading term $vy_{I|J}$ 
and its remaining terms, that we will call lower terms, are of the form $w_{K|L}y_{K|L}$, where $ht(K|L)<ht(I|J)$. Hence $\pi_{I|J}=vy_{I|J}+$ lower terms. Note that this agrees with our earlier definition of lower terms for $\pi_{ij}$.

\begin{lm}\label{LI} The elements $\pi_{I|J}$ for admissible $(I|J)$ are linearly independent.
\end{lm}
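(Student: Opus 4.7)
The plan is to use a leading-term triangularity argument in the basis of $F_k = H^0_{G_{ev}}(\la)\otimes \wedge^k Y$ indexed by admissible tuples. First I would fix a weight-basis $\mathcal{B}$ of $H^0_{G_{ev}}(\la)$ containing the highest weight vector $v$, so that $\{v'\otimes y_{K|L} : v'\in \mathcal{B}, \, (K|L) \text{ admissible}\}$ is a basis of $F_k$. The key preliminary observation is that two distinct admissible tuples $(I|J)\ne (K|L)$ give rise to distinct basis elements $y_{I|J}\ne y_{K|L}$ of $\wedge^k Y$: since admissibility fixes a canonical ordering on the pairs $(i_s,m+j_s)$, equality of the wedge products forces equality of the tuples.

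Now suppose $\sum_{(I|J)} c_{I|J}\pi_{I|J}=0$ is a nontrivial linear dependence, the sum being over admissible $(I|J)$. Expand each $\pi_{I|J}$ in the above basis:
\[
\pi_{I|J} = v\otimes y_{I|J} \;+\; \sum_{(K|L):\; ht(K|L)<ht(I|J)} w^{I|J}_{K|L}\otimes y_{K|L},
\]
where each $w^{I|J}_{K|L}\in H^0_{G_{ev}}(\la)$ has weight $\gamma^{I|J}_{K|L}\lhd \la$ with $\gamma^{I|J}_{K|L}\ne \la$ (this is exactly the ``lower terms'' description stated just before the lemma, together with the fact that the weight of $\pi_{I|J}$ is $\la_{I|J}$ and the weight contribution of $y_{K|L}$ equals $cont(K|L)$). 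In particular, $v$ does not appear in the $\mathcal{B}$-expansion of any $w^{I|J}_{K|L}$.

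Extract the coefficient of the basis vector $v\otimes y_{K|L}$ in the identity $\sum c_{I|J}\pi_{I|J}=0$. By the displayed expansion, the only term that contributes is $(I|J)=(K|L)$, which contributes $c_{K|L}\cdot 1$; for $(I|J)\ne (K|L)$, either $y_{K|L}$ does not occur in $\pi_{I|J}$ at all, or it occurs with coefficient $w^{I|J}_{K|L}$, which has no $v$-component because its weight is not $\la$. Hence $c_{K|L}=0$ for every admissible $(K|L)$, proving linear independence.

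The only delicate point is the weight bookkeeping needed to conclude that $w^{I|J}_{K|L}$ has weight strictly less than $\la$ (so does not involve $v$); this is immediate from the construction of $\pi_{I|J}$ as $v_{I|J}\rho_{I|J}$, because every factor in $\rho_{I|J}$ other than the one producing the leading term $y_{I|J}$ replaces a column of the defining product by a strictly smaller one in the dominance order. Everything else is a direct unwinding of the leading-term description already established.
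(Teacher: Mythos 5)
Your proof is correct and takes essentially the same route as the paper: both arguments rest on the leading-term decomposition $\pi_{I|J}=v\,y_{I|J}+\text{lower terms}$ and the fact that distinct admissible tuples give distinct monomials $y_{I|J}$. The only cosmetic difference is the bookkeeping — the paper fixes a weight and argues modulo terms of strictly smaller height, while you extract the coefficient of $v\otimes y_{K|L}$ in a weight basis; these are interchangeable since lower height forces the $H^0_{G_{ev}}(\la)$-coefficient to have weight different from $\la$.
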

\begin{proof}
It is enough to consider vectors $\pi_{K|L}$ of the same weight $\la_{K|L}=\la_{I|J}$. These correspond to admissible $(K|L)$ such that $cont(K|L)=cont(I|J)$. It is clear that
$ht(K|L)=ht(I|J)$ as well. The leading terms of these $\pi_{K|L}$ are $vy_{K|L}$ and all remaining terms have heights lower than $ht(I|J)$. 
Since elements $vy_{K|L}$ for different admissible $(K|L)$ are linearly independent, arguing modulo terms of heights lower than $ht(I|J)$ we conclude that 
elements $\pi_{K|L}$ are linearly independent.
\end{proof}

Using the last lemma we can construct a number of $G_{ev}$-primitive elements of $F_k$. This number is maximal whenever $\la$ is 
$(I|J)$-robust since every $\pi_{I|J}$ for admissible $(I|J)$ is a $G_{ev}$-primitive element of $F_k$. The next theorem shows that, in a case of characteristic zero, for a large 
class of weights, the vectors $\pi_{I|J}$ form a basis of $G_{ev}$-primitive vectors in $F_k$ of weight $\la_{I|J}$.

\begin{teo}\label{Fwedge}
Assume that $char(K)=0$, $(I|J)=(i_1\ldots i_k|j_1\ldots j_k)$ is admissible, $\la$ is $(I|J)$-robust, $\la_{I|J}=\sigma$, and $\sigma_m\geq n$.
Then the set of all vectors $\pi_{K|L}$ for admissible $(K|L)$ such that $cont(K|L)=cont(I|J)$ form a basis of the set of $G_{ev}$-primitive vectors of weight $\sigma$ in $F_k$.
\end{teo}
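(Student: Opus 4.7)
By Lemmas~\ref{exterior} and~\ref{LI}, the vectors $\pi_{K|L}$ for admissible $(K|L)$ with $\mathrm{cont}(K|L)=\mathrm{cont}(I|J)$ are linearly independent $G_{ev}$-primitive vectors of weight $\sigma$ in $F_k$. The plan is to match the count: I show that the dimension of the space of $G_{ev}$-primitive vectors of weight $\sigma$ in $F_k$ equals the number $N(\alpha^+,\alpha^-)$ of such admissible pairs, where $\alpha^+_i$ is the multiplicity of $i$ in $I$ and $\alpha^-_j$ is the multiplicity of $j$ in $J$. Once this is established, the $\pi_{K|L}$ are forced to form a basis.

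Since $\mathrm{char}(K)=0$, the category of $G_{ev}$-modules is semisimple and the dimension of $\sigma$-primitive vectors in $F_k$ equals the multiplicity $[F_k:H^0_{G_{ev}}(\sigma)]$. I compute this multiplicity via characters. Under the $G_{ev}$-isomorphism $Y\cong V_m^*\boxtimes V_n$, the character of $\wedge^k Y$ is $e_k(\{x_i^{-1}y_j\})$, the $k$-th elementary symmetric polynomial in the $mn$ variables $x_i^{-1}y_j$. Expanding as a sum over $k$-element subsets of $[m]\times[n]$ and grouping by row-column content gives
\[e_k(\{x_i^{-1}y_j\})=\sum_{\alpha,\beta}N(\alpha,\beta)\prod_i x_i^{-\alpha_i}\prod_j y_j^{\beta_j},\]
where $N(\alpha,\beta)$ counts $0$-$1$ matrices of size $m\times n$ with row sums $\alpha$ and column sums $\beta$. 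A direct bijection identifies $N(\alpha^+,\alpha^-)$ with the number of admissible $(K|L)$ of content $(-\alpha^+|\alpha^-)$, since admissibility forces the pairs $(k_r,l_r)$ to be distinct, encoding the characteristic function of such a matrix.

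For the multiplicity, I apply the skew-Howe decomposition $\wedge^k Y=\bigoplus_\mu S_\mu(V_m^*)\boxtimes S_{\mu'}(V_n)$ summed over $\mu\vdash k$ with $\ell(\mu)\le m$ and $\mu_1\le n$, together with $S_\mu(V_m^*)\cong S_{\overline{\mu}}(V_m)\otimes D^{-\mu_1}$ where $\overline{\mu}$ is the complement of $\mu$ in the $m\times\mu_1$ rectangle, and the Littlewood--Richardson rule, to get
\[[F_k:H^0_{G_{ev}}(\sigma)]=\sum_\mu c^{\lambda^+,\overline{\mu}}_{\sigma^++(\mu_1)^m}\cdot c^{\lambda^-,\mu'}_{\sigma^-}.\]
Robustness on the $-$-side forces $\sigma^-/\lambda^-$ to be a horizontal strip with row lengths $\alpha^-_j\le\lambda^-_{j-1}-\lambda^-_j$, so $s_{\sigma^-/\lambda^-}=\prod_j h_{\alpha^-_j}$ and $c^{\lambda^-,\mu'}_{\sigma^-}=K_{\mu',\alpha^-}$. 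The hypothesis $\sigma_m\ge n$, combined with $\mu_1\le n$ and the validity constraint $\mu_1\ge\max_i\alpha^+_i$, restricts the contributing $\mu$'s to those for which $(\sigma^++(\mu_1)^m)/\lambda^+$ is also a horizontal strip, giving $c^{\lambda^+,\overline{\mu}}_{\sigma^++(\mu_1)^m}=K_{\overline{\mu},(\mu_1)^m-\alpha^+}=K_{\mu,\alpha^+}$ via the rectangle-complementation identity for Kostka numbers. The dual Cauchy identity $\sum_\mu K_{\mu,\alpha^+}K_{\mu',\alpha^-}=N(\alpha^+,\alpha^-)$ then completes the count.

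The main obstacle is the $+$-side collapse: one must verify that under $\sigma_m\ge n$, together with the skew-Howe bounds on $\mu_1$ and the $(I|J)$-robustness of $\lambda$, the skew shape $(\sigma^++(\mu_1)^m)/\lambda^+$ lies in the horizontal-strip regime wherever the LR coefficient can be nonzero. This delicate combinatorial step is where the hypothesis $\sigma_m\ge n$ is essential; once it is confirmed, the reduction to Kostka numbers and the dual Cauchy identity complete the proof mechanically.
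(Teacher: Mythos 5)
Your overall strategy is sound and genuinely different from the paper's. The paper invokes Berele--Regev's decomposition of the irreducible $H^0_G(\la)$ into $G_{ev}$-constituents with multiplicities $C^{\la'}_{\mu'\nu}$ (this is where $\sigma_m\geq n$ enters, both to force typicality and to control the shape $\la'/\mu'$), and then counts Littlewood--Richardson tableaux of that shape directly; you instead compute $[F_k:H^0_{G_{ev}}(\sigma)]$ purely inside $G_{ev}$ via the dual Cauchy decomposition of $\wedge^k Y$ and reduce to $\sum_\mu K_{\mu\alpha^+}K_{\mu'\alpha^-}=N(\alpha^+,\alpha^-)$. The linear-independence half (Lemmas \ref{exterior} and \ref{LI}), the identification of $N(\alpha^+,\alpha^-)$ with the number of admissible $(K|L)$ of the given content, and the reduction of the multiplicity to a sum of products of Littlewood--Richardson coefficients are all correct.

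However, the step you yourself flag as the main obstacle is false as stated: it is not true that for every contributing $\mu$ the skew shape $(\sigma^++(\mu_1)^m)/\la^+$ is a horizontal strip. Take $m=n=3$, $\la^+=(10,5,5)$, $\la^-=(3,1,0)$, $(I|J)=(1,3|1,2)$, so $\alpha^+=(1,0,1)$, $\alpha^-=(1,1,0)$, $\sigma=(9,5,4|4,2,0)$; then $\sigma_m=4\geq n$ and $\la$ is $(I|J)$-robust. For $\mu=(2)$ one has $\overline\mu=(2,2)$, and the shape $(11,7,6)/(10,5,5)$ contains two boxes in column $6$, yet $c^{(11,7,6)}_{(10,5,5),(2,2)}=1$ and the minus-side factor $c^{(4,2,0)}_{(3,1,0),(1,1)}=1$, so this $\mu$ does contribute. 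The intended identity $c^{\sigma^++(\mu_1)^m}_{\la^+,\overline\mu}=K_{\mu,\alpha^+}$ is nevertheless true, but it must be obtained by dualizing first: $c^{\sigma^++(\mu_1)^m}_{\la^+,\overline\mu}=\operatorname{mult}\bigl(V_{\sigma^+},V_{\la^+}\otimes S_\mu(V_m)^*\bigr)=c^{\la^+}_{\sigma^+,\mu}$, and the skew shape $\la^+/\sigma^+$, with $\alpha^+_i$ boxes in row $i$, is a disjoint union of rows precisely because of $(I|J)$-robustness (the condition $\alpha^+_{i-1}\leq\la^+_{i-1}-\la^+_i$ is exactly $\la^+_i\leq\sigma^+_{i-1}$), giving $s_{\la^+/\sigma^+}=\prod_i h_{\alpha^+_i}$ and hence $c^{\la^+}_{\sigma^+,\mu}=K_{\mu,\alpha^+}$. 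With this repair your argument closes; note that the repaired argument does not appear to use $\sigma_m\geq n$ at all, which is worth reconciling with the paper's formulation before asserting the stronger statement.
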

\begin{proof} 
Since $\lambda_m\geq \sigma_m$, the assumption $\sigma_m\geq n$ gives $\lambda_m\geq n$. Since $char(K)=0$, this implies $\la$ is typical and $H^0_G(\la)$ is irreducible. 
Theorem 6.11 of \cite{br} states that  
$H^0_G(\la)$ is a direct sum of $G_{ev}$-modules $H^0_{G_{ev}}(\tau)$, where $\tau=(\mu|\nu)$ is such that $\mu<\la$, appearing with multiplicity equal to the Littlewood-Richardson coefficient $C^{\lambda'}_{\mu' \nu}$ corresponding to the transposed partition $\lambda'$ of $\la$, the transposed partition $\mu'$ of $\mu$ and the partition $\nu$. 
Therefore the dimension of the set of $G_{ev}$-primitive vectors of weight $\sigma=\la_{I|J}=(\mu|\nu)=\tau$ in $F_k$ is the multiplicity 
$C^{\lambda'}_{\mu' \nu}$ of $H^0_{G_{ev}}(\tau)$ in the above decomposition of $H^0_G(\la)$.
The number $C^{\lambda'}_{\mu' \nu}$ equals the number of Littlewood-Richardson tableaux of shape $\lambda'/\mu'$ and of weight $\nu$.

Consider the case when $\tau=(\mu|\nu)=\sigma=\la_{I|J}$. From the assumption of the theorem we infer that the shape of the skew tableau $\lambda'/\mu'$ is as follows.
The entries in the first $n$ rows start at the $(m+1)$st positions. Since $\la$ is $(I|J)$-robust and $\sigma_m\geq n$, the remaining entries of the skew tableuax
are positioned under the $n$th row and in the first $m$ columns in such a way that every row contains no more than one entry of the skew tableau.

Every Littlewood-Richardson tableau $\lambda'/\mu'$ must have all entries in the first row equal to $m+1$, in the second row equal to $m+2$ and so on until the entries in the $n$-th row 
(if any) equal to $m+n$. This follows because this tableau is semistandard, the entries in the first $n$ rows start at the same position and the entry at the end of the $j$-th row  (if any) equals $m+j$.
Since no two remaining entries lie in the same row, such tableau is semistandard if and only if all the entries in the same column are weakly increasing. Since $\la$ is $(I|J)$-robust, 
we infer that any such tableau produces a lattice permutation (there are too few remaining entries for their order to matter).
Therefore Littlewood-Richardson tableau of shape $\lambda'/\mu'$ and content $\nu$ are in one-to-one correspondance to admissible $(K|L)$ of the same content as $(I|J)$.

Lemma \ref{LI} shows that $\pi_{K|L}$ are linearly independent, hence they form a basis of the set of $G_{ev}$-primitive vectors of weight $\sigma$ in $F_k$.

\end{proof} 

The above theorem illustrates that elements $\pi_{I|J}$ can be used to describe all $G_{ev}$-primitive vectors of $H^0_G(\la)$ of certain weights. 

For small values of $m$ and $n$, the elements $\pi_{I|J}$ capture a lot of $G_{ev}$-primitive vectors. For example, if $\la$ is a restricted hook weight of $G=GL(2|2)$ 
with $\la_2\geq 2$, then according to \cite{gm1}, vectors $\pi_{I|J}$ belong to $H^0_G(\la)$ except when $\la_1=\la_2$ or $\la_3=\la_4$, in which case $\pi_{13|24}$ and $\pi_{14|23}$ do not belong to $H^0_G(\la)$. However, if $\la_1=\la_2$, then $-\pi_{13|24}-\pi_{14|23}$ ($=m_2$ in the notation 
of \cite{gm1}) belongs to $H^0_G(\la)$; and if $\la_4=\la_4$, then $-\pi_{13|24}+\pi_{14|23}$ ($=n_2$ in the notation of \cite{gm1}) belongs to $H^0_G(\la)$.
This suggests that even when $\pi_{I|J}$ do not belong to $H^0_G(\la)$, some of their linear combinations produce $G_{ev}$-primitive elements from $H^0_G(\la)$.
It is not clear if all $G_{ev}$-primitive elements of $H^0_G(\la)$ are obtained this way.

Other applications of $G_{ev}$-primitive vectors $\pi_{I|J}$ that are in $H^0_G(\la)$ are in the situation analogous to Corollary \ref{cor} when we can decide about the irreducibility of $H^0_G(\la)$ using a map $\phi_k$ defined below.
Finally, in the next section we show an application of $G_{ev}$-primitive vectors $\pi_{ij}$ to the linkage principle for $G$.

\subsection{The map $\phi_r$}

Consider a map $\phi_r^{\otimes}:F_r^{\otimes} \to F_r$ defined by
\[\phi_r^{\otimes}(w\otimes y_{i_1 j_1}\otimes \ldots \otimes y_{i_r j_r})=(w)_{i_1j_1}D\ldots _{i_rj_r}D\]
for each $w\in H^0_{G_{ev}}(\la)$ and indices $1\leq i_1,\ldots, i_r\leq m$, $m+1 \leq j_1,\ldots, j_r\leq m+n$. 

\begin{lm}\label{factor}
The map $\phi_r^{\otimes}$ induces the map $\phi_r:F_r \to F_r$ defined by
\[\phi_r(w\wedge y_{i_1 j_1}\wedge \ldots \wedge y_{i_r j_r})=(w)_{i_1j_1}D\ldots _{i_rj_r}D\]
for each $w\in H^0_{G_{ev}}(\la)$ and indices $1\leq i_1,\ldots, i_r \leq m$ and $m+1\leq j_1, \ldots, j_r\leq m+n$.
\end{lm}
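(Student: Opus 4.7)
The plan is to show that the multilinear map $\phi_r^{\otimes}$ descends to the supersymmetric (equivalently exterior) quotient $\wedge^r Y$. Since each $y_{ij}$ with $1\leq i\leq m$ and $m+1\leq j\leq m+n$ is odd, the kernel of the natural surjection $Y^{\otimes r}\to \wedge^r Y$ is generated by the adjacent relations $y\otimes y$ and $y\otimes y'+y'\otimes y$. Thus factoring through $\wedge^r Y$ reduces to proving, for every $w\in H^0_{G_{ev}}(\la)$ and every pair of index pairs $(a,b)$, $(c,d)$ with $1\leq a,c\leq m$ and $m+1\leq b,d\leq m+n$, the anticommutation identity
\[(w)_{ab}D\,_{cd}D+(w)_{cd}D\,_{ab}D=0.\]
Specializing to $(a,b)=(c,d)$ and dividing by $2$ (using $\mathrm{char}\,K\neq 2$) then yields $(w)_{ab}D\,_{ab}D=0$, handling the repeated case.

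To establish the anticommutation, I would transfer the computation to $\mathrm{Dist}(G)$ via the identification $e_{kl}\leftrightarrow{}_{lk}D$ recalled in Section 1: under it $_{ab}D$ corresponds to the odd matrix unit $e_{ba}$ and $_{cd}D$ to $e_{dc}$. The supercommutator formula from Section 1 gives
\[[e_{ba},e_{dc}]=e_{bc}\delta_{ad}+(-1)^{(|b|+|a|)(|d|+|c|)}e_{da}\delta_{bc}.\]
Because $a,c$ are even-parity indices while $b,d$ are odd-parity indices, both Kronecker symbols $\delta_{ad}$ and $\delta_{bc}$ vanish identically, so $[e_{ba},e_{dc}]=0$. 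As both elements are odd, this supercommutator coincides with the ordinary anticommutator, giving $e_{ba}e_{dc}+e_{dc}e_{ba}=0$ in $\mathrm{Dist}(G)$; unwinding the left action on $K[G]$ then produces exactly the displayed identity.

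With the anticommutation in place, $\phi_r^{\otimes}$ is alternating in its $Y$-slots and hence factors uniquely through the exterior quotient as the claimed linear map $\phi_r\colon F_r\to F_r$. The image genuinely lies in $F_r\subset H^0_G(\la)$ because each odd superderivation $_{i_s j_s}D$ sends the $s$-th floor to the $(s+1)$-st, as invoked in the proof of Corollary \ref{cor}, so applying $r$ of them to $w\in F_0=H^0_{G_{ev}}(\la)$ lands in $F_r$. The only real bookkeeping point is the order of composition and the left-action convention; once that is sorted, vanishing of the supercommutator is a pure parity observation, so I do not foresee a substantive obstacle.
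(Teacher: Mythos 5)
Your proof is correct, but it takes a genuinely different route from the paper's. The paper verifies the required anticommutation $_{i_1j_1}D\,_{i_2j_2}D+{}_{i_2j_2}D\,_{i_1j_1}D=0$ by hand: it asks the reader to check that this operator annihilates the monomial generators $c_{K|L}$ of $A(m|n)$ (splitting into the cases $i_1=i_2$ and $i_1\neq i_2$) and then to extend to $K[G]$ via the quotient rule; this is self-contained and independent of any sign convention. You instead pull the relation out of the presentation of $\mathfrak{gl}(m|n)\subset\Dist(G)$: under $e_{ba}\leftrightarrow{}_{ab}D$ both Kronecker deltas in the supercommutator formula vanish because $a,c\leq m<b,d$, and for two odd elements the supercommutator is the anticommutator. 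That is cleaner and more conceptual, but it does lean on the point you flag only in passing: the paper deliberately uses the action $\phi\cdot v=\sum v_1\phi(h_2)$ \emph{without} the rule of signs, so the passage from a relation in $\Dist(G)$ to a relation among the right operators $_{ab}D$ involves a twist by $(-1)^{|\phi||\psi|}$ (plus the order reversal inherent in right operators). Neither affects your conclusion --- for two odd elements the twisted and untwisted anticommutators differ only by an overall sign, and an antihomomorphism preserves anticommutators --- but this should be said rather than deferred to ``bookkeeping.'' Two smaller remarks. First, the identity must hold on all of $K[G]$, not just on $w\in H^0_{G_{ev}}(\la)$, since the offending pair may sit in the middle of the composition $(w)_{i_1j_1}D\ldots{}_{i_rj_r}D$; your $\Dist(G)$ argument does deliver the operator identity globally, but your statement of what is to be proved is phrased too narrowly. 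Second, there is an even shorter convention-free argument available: the anticommutator of two odd right superderivations is itself an ordinary (even) derivation of $A(m|n)$, because the Koszul cross-terms cancel in pairs; hence it suffices to check vanishing on the single generators $c_{kl}$, where $(c_{kl})_{ab}D\,_{cd}D=\delta_{la}\delta_{bc}c_{kd}=0$ since $b>m\geq c$. Your handling of the repeated-index case via division by $2$ and of the containment of the image in $F_r$ (which the paper's proof of Lemma \ref{factor} leaves implicit) is fine.
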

\begin{proof}
To see that the map $\phi_r$ is well defined, it is enough to verify that odd superderivations $_{i_1j_1}D$ and $_{i_2j_2}D$, where $1\leq i_1,i_2 \leq m$ and $m+1\leq j_1,j_2 \leq m+n$, supercommute. The reader is asked to verify first that each $_{i_1j_1}D_{i_2j_2}D+ _{i_2j_2}D_{i_1j_1}D$ annihilates additive generators $c_{K|L}$ of $A(m|n)$ by considering two separate cases when $i_1=i_2$ and $i_1\neq i_2$. Afterwards, apply the quotient rule on elements $\frac{a}{b}$, where $a,b\in A(m|n)$ and $b$ is even, to complete the argument.
\end{proof}

\begin{lm}\label{morr}
The map $\phi_r$ is a morphism of $G_{ev}$-modules and its image is the intersection of the simple module $L_G(\la)$ with the $r$-th floor $F_r$ of $H^0_G(\la)$.
\end{lm}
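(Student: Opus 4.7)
My plan is to generalize the arguments used for Lemma \ref{mor1} and Corollary \ref{cor} from $r=1$ to arbitrary $r$.

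For the first assertion, I would verify that $\phi_r$ commutes with the action of each even superderivation $_{kl}D$ on $F_r$. Applied to a simple tensor $w\wedge y_{i_1j_1}\wedge\cdots\wedge y_{i_rj_r}$, the $G_{ev}$-action distributes via the coproduct, yielding the term $(w)_{kl}D\wedge y_{i_1j_1}\wedge\cdots$ plus $r$ correction terms obtained from $(y_{i_sj_s})_{kl}D$ evaluated using Lemmas \ref{1'}, \ref{1''}, \ref{1'''}. On the other side, the expression $((w)_{i_1j_1}D\cdots{}_{i_rj_r}D)_{kl}D$ is rewritten by commuting $_{kl}D$ successively past each odd $_{i_sj_s}D$ via the supercommutator $[_{ij}D,{}_{kl}D]=\delta_{jk}\,_{il}D-\delta_{li}\,_{kj}D$ already used in the proof of Lemma \ref{mor1}; this produces precisely the same correction terms. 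Thus $\phi_r(\,_{kl}D\cdot x)={}_{kl}D\cdot\phi_r(x)$ on simple tensors. For the divided powers $_{kl}^{(s)}D$ and for $\binom{_{kk}D}{s}$ one proceeds by induction on $s$, reducing to the first-order case.

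For the second assertion I would imitate the PBW argument of Corollary \ref{cor}. Ordering the generators of $\operatorname{Dist}(G)$ as there---first $_{ij}D$ and $_{ij}^{(s)}D$ with $i>j$, then odd $_{ij}D$ with $i<j$, then even $_{ij}D$, $_{ij}^{(s)}D$ with $i<j$ and the torus---the vector $v$ is annihilated by the first block, while odd $_{ij}D$ with $i<j$ raises floor by one and the remaining operators preserve floors. Consequently, any element of $L_G(\la)=\operatorname{Dist}(G)\cdot v$ landing in $F_r$ must come from applying exactly $r$ odd operators $_{i_sj_s}D$ (with $1\le i_s\le m$ and $m+1\le j_s\le m+n$) to $v$ and then acting by $\operatorname{Dist}(G_{ev})$. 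Hence $L_G(\la)\cap F_r$ equals the $G_{ev}$-submodule of $F_r$ generated by the vectors $(v)_{i_1j_1}D\cdots{}_{i_rj_r}D$. By Lemma \ref{factor} these vectors are precisely the images $\phi_r(v\wedge y_{i_1j_1}\wedge\cdots\wedge y_{i_rj_r})$. Since $\phi_r$ is $G_{ev}$-linear by the first part, the $G_{ev}$-submodule they generate lies in $\operatorname{Im}(\phi_r)$. For the reverse inclusion, I would invoke that the $G_{ev}$-module $F_r=H^0_{G_{ev}}(\la)\otimes\wedge^r Y$ is generated under the $G_{ev}$-action by the distinguished vectors $v\wedge y_{i_1j_1}\wedge\cdots\wedge y_{i_rj_r}$, so $\phi_r(F_r)$ coincides with the $G_{ev}$-submodule generated by their images.

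The main obstacle is Part 2: one has to justify that $F_r$ is $G_{ev}$-generated by the vectors $v\wedge y_{i_1j_1}\wedge\cdots\wedge y_{i_rj_r}$, which rests on $H^0_{G_{ev}}(\la)$ being cyclic over $v$ as a $\operatorname{Dist}(G_{ev})$-module. This is immediate in characteristic zero but requires care in characteristic $p$, where one should route through the structure of $H^0_{G_{ev}}(\la)$ as a dual Weyl module. The sign bookkeeping in Part 1 is tedious but essentially automatic once the $r=1$ commutator identity is in hand.
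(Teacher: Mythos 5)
Your plan coincides in structure with the paper's proof: Part~1 is done there by induction on $r$, playing the supercommutator $_{ij}D_{kl}D-{}_{kl}D\,_{ij}D=\delta_{jk}\,_{il}D-\delta_{li}\,_{kj}D$ against the correction terms $(y_{i_sj_s})_{kl}D$ computed from Lemmas \ref{1''} and \ref{1'''}, and Part~2 uses the same Poincare--Birkhoff--Witt ordering as in Corollary \ref{cor}. One caveat on Part~1: in characteristic $p$ the compatibility with $_{kl}^{(s)}D$ and $\binom{_{kk}D}{s}$ does not follow from the case $s=1$ by ``induction on $s$,'' because these divided powers are not polynomials in $_{kl}D$ over $K$; the clean route is to verify the identity in the $\mathbb{Z}$-form $\mathbb{C}[G]_{\mathbb{Z}}$ and reduce modulo $p$, as the paper does for the analogous statements about even superderivations.

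The genuine gap is in your reverse inclusion $\mathrm{Im}(\phi_r)\subseteq L_G(\la)\cap F_r$. You reduce it to $F_r$ being $G_{ev}$-generated by the vectors $v\wedge y_{i_1j_1}\wedge\cdots\wedge y_{i_rj_r}$, and you ground that in ``$H^0_{G_{ev}}(\la)$ being cyclic over $v$,'' to be extracted from its structure as a dual Weyl module. That premise is false in characteristic $p$: it is the Weyl module $V_{G_{ev}}(\la)$ that is cyclic on its highest weight vector, while its contravariant dual $H^0_{G_{ev}}(\la)$ is cocyclic, and $\Dist(G_{ev})\cdot v$ equals the simple socle $L_{G_{ev}}(\la)$, a proper submodule whenever $H^0_{G_{ev}}(\la)$ is reducible. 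So routing through the dual Weyl module structure cannot deliver cyclicity, and as written your argument establishes the equality of $\mathrm{Im}(\phi_r)$ with $L_G(\la)\cap F_r$ only when $H^0_{G_{ev}}(\la)$ is simple (characteristic zero, or $\la\in C$ as in Theorem \ref{link1} --- which does cover the situations where the paper applies the lemma). To be fair, the paper's own proof is equally terse here: after the PBW step it concludes with ``since $\phi_r$ is a $G_{ev}$-morphism,'' leaving the inclusion $(w)_{i_1j_1}D\cdots{}_{i_rj_r}D\in L_G(\la)$ for $w\notin L_{G_{ev}}(\la)$ without a separate argument. Your proposal has the merit of making the missing hypothesis explicit, but the justification you offer for it does not hold in positive characteristic, so this step still needs a different argument there.
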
 
\begin{proof}
We will proceed by induction and check that $\phi_r$ preserves the action of even superderivations $_{kl}D$.
The initial step for $r=1$ is proven in Lemma \ref{mor1}. Assume the statement is true for $r=s$ and we will prove it for $r=s+1$.

Assume either $1\leq k,l \leq m$ or $m+1\leq k,l \leq m+n$ and $r>1$. For short, we will write $wy_{i_1 j_1}\ldots y_{i_t j_t}$ 
for expressions like $w\otimes (y_{i_1 j_1} \wedge \ldots y_{i_t j_t})$.

Then $(\phi_{s+1}(wy_{i_1 j_1}\ldots y_{i_s j_s}y_{ij}))_{kl}D = (w)_{i_1 j_1}D\ldots _{i_s j_s}D_{ij}D_{kl}D $. On the other hand, Lemmas \ref{1''} and \ref{1'''} imply
\[\begin{array}{ll}(wy_{i_1 j_1}\ldots y_{i_s j_s}y_{ij})_{kl}D=&(wy_{i_1 j_1} \ldots y_{i_s j_s})_{kl}D y_{ij} \\
&-\delta_{li} wy_{i_1 j_1}\ldots y_{i_s j_s} y_{kj}+\delta_{jk} wy_{i_1 j_1}\ldots y_{i_s j_s} y_{il}.\end{array}\]

Since $\phi_s((wy_{i_1 j_1} \ldots y_{i_s j_s})_{kl}D)=\phi_s(wy_{i_1 j_1} \ldots y_{i_s j_s})_{kl}D$ by the inductive assumption,
we compute 
\[\begin{array}{ll}\phi_{s+1}((wy_{i_1 j_1}\ldots y_{i_s j_s}y_{ij})_{kl}D)=&(w)_{i_1 j_1}D\ldots _{i_s j_s}D_{kl}D_{ij}D\\
&-\delta_{li}  (w)_{i_1 j_1}D\ldots _{i_s j_s}D_{kj}D \\
&+\delta_{jk} (w)_{i_1 j_1}D\ldots _{i_s j_s}D_{il}D.\end{array}\]

Since $_{ij}D_{kl}D-\,_{kl}D_{ij}D=\delta_{jk} \, _{il}D -\delta_{li} \, _{kj}D$, we conclude that 
\[(\phi_{s+1}(wy_{i_1 j_1}\ldots y_{i_s j_s}y_{ij}))_{kl}D =\phi_{s+1}((wy_{i_1 j_1}\ldots y_{i_s j_s}y_{ij})_{kl}D).\] 

For the second part of the statement, use Poincare-Birkhoff-Witt theorem and order the superderivations in the following way. Start with $_{ij}D$ and $_{ij}^{(r)}D$ with $i>j$, continue with odd $_{ij}D$ with $i<j$ followed by even $_{ij}D$, $_{ij}^{(r)}D$ for $i<j$ and $\binom{_{ii}D}{r}$.
The highest vector $v$ of $L_G(\la)$ is annihilated by $_{ij}D$ and $_{ij}^{(r)}D$ where $i>j$. Each odd superderivation $_{ij}D$, where $i<j$, sends a 
vector from floor $F_k$ to the next floor $F_{k+1}$ and each even $_{ij}D$, $_{ij}^{(r)}D$ for $i<j$ and $\binom{_{ii}D}{r}$ sends $F_k$ to itself. To land in the $r$-floor, exactly $r$ different odd superderivations $_{i_t j_t}D$  with $i_t<j_t$ must be applied to $v$. Therefore $L_G(\la)\cap F_r$ is the sum of $G_{ev}$-modules generated by $(v)_{i_1 j_1}D \ldots _{i_r j_r}D$, where 
$1\leq i_t \leq m$ and $m+1\leq j_t \leq m+n$ for each $t=1, \ldots r$. Since $\phi_r$ is a $G_{ev}$-morphism, we conclude that this is exactly the image of the map $\phi_r$.
\end{proof}

\section{Comments about blocks and linkage principle for $\GL(m|n)$}

In this section we assume that the base field $K$ has characteristic $p>2$.

\subsection{Blocks}
Define the blocks of $G$ as follows.
Indecomposable injective $G$-supermodules $I(\la)$ and $I(\mu)$ occur in the same block of $G$ if and only if there is a chain of indecomposable injective supermodules
$I(\la)=I(\la_1), \ldots , I(\la_r)=I(\mu)$ such that every pair of consecutive injective supermodules $I(\la_j)$ and $I(\la_{j+1})$ contain a common simple 
composition factor.
If the weights $\la$ and $\mu$ belong to the same block, we call them linked and denote $\la\sim \mu$.

A $d$-exponent $d(\la)\geq 0$ of a dominant weight $\la$ of $\GL(m)$ is the maximal number such that $\la_i-\la_{i+1} \equiv -1 \pmod {p^{d(\la)}}$
for every $i=1, \ldots, m-1$.

There is the following classical result of Donkin (see \cite{don}, \S 1 (6)) describing the structure of blocks for the general linear group $\GL(m)$.

\begin{pr}\label{Donkin}
Dominant polynomial weights $\la$ and $\mu$ of $\GL(m)$ belong to the same block of $\GL(m)$ if and only if $d(\la)=d(\mu)=d$ and 
there is a permutation $\sigma$ of the set $\{1, \ldots, m\}$ such that 
\[\la_i-i\equiv \mu_{\sigma(i)}-\sigma(i)\pmod {p^{d+1}}\]
for every $i=1, \ldots, m$.
\end{pr}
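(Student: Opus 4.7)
The plan is to reinterpret both the $d$-exponent and the linkage congruence as data about the affine-Weyl-type orbit of the shifted weight $\lambda+\rho$, and then invoke the general linkage principle together with combinatorial properties of walls for $GL(m)$. I would take $\rho=-(1,2,\ldots,m)$ (this is a legitimate choice of a dot-shift for $GL(m)$, which differs from the standard $\rho$ of the semisimple part by a constant). Then $(\lambda+\rho)_i=\lambda_i-i$ is exactly the quantity appearing in the congruence of the proposition, and the condition $d(\lambda)=d$ translates into: all coordinates of $\lambda+\rho$ are congruent modulo $p^{d}$ but not all congruent modulo $p^{d+1}$. Thus $d(\lambda)$ measures how deeply the dot-shifted weight lies on the "diagonal" walls modulo increasing powers of $p$.

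For necessity, I would invoke the linkage principle for reductive algebraic groups (Andersen--Jantzen): if $L(\lambda)$ and $L(\mu)$ occur in the same block of $GL(m)$, then $\lambda+\rho$ and $\mu+\rho$ lie in one orbit of the affine Weyl group $W_p=S_m\ltimes p\,\mathbb{Z}\Phi$ under the linear action, which at the level of coordinates means there exists $\sigma\in S_m$ and a vector in $p\mathbb{Z}\Phi$ producing the required translation. I would then prove that the $d$-exponent is itself a block invariant: translation functors and tensoring with the determinant preserve the length of the diagonal stabilizer, so any two weights in the same block share the same $d$. Refining the orbit condition under the extra constraint $d(\lambda)=d(\mu)=d$ (which forces the translations in the orbit to lie in $p^{d+1}\mathbb{Z}^m$ in order to respect the depth), one obtains the congruence $\lambda_i-i\equiv\mu_{\sigma(i)}-\sigma(i)\pmod{p^{d+1}}$.

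For sufficiency, I would build an explicit chain of linked weights connecting $\lambda$ to $\mu$ inside the common $d$-facet. Given the congruence, adjacent transpositions of coordinates of $\lambda+\rho$ correspond (at depth exactly $d$) to reflection-across-a-wall by the affine Weyl group, and each such elementary step can be realized by a translation functor whose source and target weights share a common composition factor in an indecomposable tilting module for $GL(m)$. Concatenating the elementary moves corresponding to a reduced expression of $\sigma$, and incorporating the $p^{d+1}\mathbb{Z}^m$ shifts via translation through tensor products with powers of the determinant and with appropriate Weyl modules, produces the desired chain in the definition of the block.

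The main obstacle, as usual in this circle of results, is the sufficiency direction and specifically the verification that the required elementary translation steps across walls do preserve blocks and can be assembled into a single chain; this is the technical core of Donkin's argument in \cite{don} and relies on the fine structure of tilting modules (or equivalently blocks of the Schur algebra $S(m,r)$) together with the fact that the $d$-exponent is genuinely constant on translation-equivalence classes. Once that is in hand, the combinatorial bookkeeping of the permutation $\sigma$ and the $p^{d+1}$-congruence is essentially forced by the shape of the facets.
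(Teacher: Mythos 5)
The paper does not prove this proposition at all: it is quoted verbatim as a classical result of Donkin, with the reference \cite{don}, \S 1 (6), so there is no in-paper argument to compare yours against. Judged on its own, your sketch has a genuine gap in the necessity direction. You claim that once one knows (a) $\lambda+\rho$ and $\mu+\rho$ lie in the same $W_p$-orbit (which is what the ordinary linkage principle delivers, i.e.\ a congruence modulo $p$ after a permutation) and (b) the $d$-exponent is a block invariant, the constraint $d(\lambda)=d(\mu)=d$ ``forces the translations in the orbit to lie in $p^{d+1}\mathbb{Z}^m$,'' yielding the congruence modulo $p^{d+1}$. That implication is false. Take $p=3$, $m=3$, $\lambda=(20,18,10)$ and $\mu=(23,15,10)$, both dominant polynomial of degree $48$. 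Then $\lambda_i-i=(19,16,7)$ and $\mu_i-i=(22,13,7)$ are both congruent to $(1,1,1)$ modulo $3$ and have equal sums, so the two shifted weights lie in one $W_p$-orbit; moreover $d(\lambda)=d(\mu)=1$ (all consecutive differences are $\equiv-1\pmod 3$ but not all $\equiv -1\pmod 9$). Yet modulo $9$ the multisets are $\{1,7,7\}$ and $\{4,4,7\}$, so no permutation $\sigma$ produces the required congruence modulo $p^{d+1}=9$, and by the proposition these weights lie in \emph{different} blocks. So the modulo-$p^{d+1}$ statement is strictly finer than anything the linkage principle plus $d$-invariance can give; the $W_p$-orbits genuinely split into several blocks, and establishing exactly how they split is the real content of Donkin's theorem (his proof runs through the blocks of the Schur algebras, Steinberg/Frobenius reductions and induction on $d$, not through the affine Weyl group alone).

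On the sufficiency direction your outline (elementary wall-crossings realized by translation functors, assembled along a reduced word for $\sigma$) is a reasonable plan but is explicitly deferred: you yourself flag the verification that the elementary steps preserve blocks as ``the technical core of Donkin's argument,'' and nothing in the sketch supplies it. Since the paper treats the proposition as a black box, the honest conclusion is that your proposal reproduces the easy consequences of the general linkage principle but does not bridge the gap from the modulo-$p$ orbit condition to the modulo-$p^{d+1}$ block condition in either direction.
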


Say that $\la$ is strongly linked to $\mu$ if there is a chain of weights $\la=\la_1, \ldots, \la_r=\mu$ such that 
for every consecutive pair of weights $\la_j$ and $\la_{j+1}$ we have either 
$L_G(\la_j)$ is a composition factor of $H^0_G(\la_{j+1})$ or $L_G(\la_{j+1})$ is a composition factor of $H^0_G(\la_j)$.

Say that $\la$ is weakly linked to $\mu$ if there is a chain of weights $\la=\la_1, \ldots, \la_r=\mu$ such that 
for every consecutive pair of weights $\la_j$ and $\la_{j+1}$ we have either 
$Ext^1_G(L_G(\la_j), L_G(\la_{j+1}))\neq 0$ or $Ext^1_G(L_G(\la_{j+1}), L_G(\la_{j}))\neq 0$.

The next statement shows equivalence of linkage, strong linkage and weak linkage.

\begin{lm}\label{strong}
Dominant weights $\la$ and $\mu$ belong to the same block of $\GL(m|n)$ if and only if they are strongly linked if and only if they are weakly linked. 
\end{lm}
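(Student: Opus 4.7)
The plan is to prove the equivalences via the cycle
\[
\text{strong linkage} \;\Longrightarrow\; \text{weak linkage} \;\Longrightarrow\; \text{linkage} \;\Longrightarrow\; \text{strong linkage}.
\]
In each implication it suffices, by the chain-type definitions of the three relations, to handle a single link.

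For the implication ``strong $\Rightarrow$ weak,'' assume $L_G(\la_j)$ is a composition factor of $H^0_G(\la_{j+1})$ (the other case is symmetric). Since $H^0_G(\la_{j+1})$ has simple socle $L_G(\la_{j+1})$, it is indecomposable. I would then invoke the standard fact that in an indecomposable finite-length module the composition factors are connected by a chain of nontrivial $Ext^1$'s: if they split into two classes with no mutual extensions, one can use the vanishing to produce a nontrivial direct sum decomposition, a contradiction. Applied to the composition factors $L_G(\la_j)$ and $L_G(\la_{j+1})$ of $H^0_G(\la_{j+1})$, this yields the required weak linkage chain. For ``weak $\Rightarrow$ linked,'' assume $Ext^1_G(L_G(\la_j),L_G(\la_{j+1}))\neq 0$ and take a non-split extension $0\to L_G(\la_{j+1})\to E\to L_G(\la_j)\to 0$. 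Embedding $E$ into the injective hull $I(\la_{j+1})$ shows $L_G(\la_j)$ is a composition factor of $I(\la_{j+1})$; since it is also the socle of $I(\la_j)$, the two indecomposable injectives share a composition factor, which is exactly linkage.

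The core step is ``linked $\Rightarrow$ strong.'' Reduce to a single link, so $I(\la_j)$ and $I(\la_{j+1})$ share a common composition factor $L_G(\nu)$. The plan is to exploit the good filtration of the injective hull: $I(\la_j)$ admits a filtration whose factors are induced supermodules $H^0_G(\sigma)$, together with the reciprocity
\[
(I(\la_j):H^0_G(\sigma)) \;=\; [H^0_G(\sigma):L_G(\la_j)].
\]
Because $L_G(\nu)$ is a composition factor of $I(\la_j)$, it must appear as a composition factor of some $H^0_G(\sigma)$ occurring in the filtration, and reciprocity forces $L_G(\la_j)$ to be a composition factor of the same $H^0_G(\sigma)$. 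Hence $\la_j$ and $\nu$ are strongly linked through $\sigma$. Running the same argument for $I(\la_{j+1})$ produces a strong linkage from $\la_{j+1}$ to $\nu$, and concatenating the two chains gives strong linkage between $\la_j$ and $\la_{j+1}$.

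The main obstacle is the justification of the good filtration property for indecomposable injective supermodules $I(\la)$ of $G=GL(m|n)$ together with the reciprocity displayed above. In the classical reductive setting this is a direct output of the Cline--Parshall--Scott highest weight category formalism; for general linear supergroups the corresponding statements follow from the highest weight structure of the category of rational $G$-supermodules (as in the treatment of Brundan--Kleshchev and Zubkov), but the precise references must be invoked carefully. Once this foundational fact and the elementary observation about Ext-connectivity in indecomposable modules are in place, the three implications above fit together formally to give the lemma.
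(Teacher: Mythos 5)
Your proof is correct, but it closes the cycle of implications in the opposite direction from the paper and uses different machinery for the hard step. You prove strong $\Rightarrow$ weak $\Rightarrow$ linked $\Rightarrow$ strong, whereas the paper proves strong $\Rightarrow$ linked $\Rightarrow$ weak $\Rightarrow$ strong. Your first two implications are fine: the Ext-connectivity of composition factors of the indecomposable module $H^0_G(\la_{j+1})$ (it has simple socle $L_G(\la_{j+1})$, so it is indeed indecomposable and finite length) gives strong $\Rightarrow$ weak, and embedding a non-split extension into $I(\la_{j+1})$ gives weak $\Rightarrow$ linked; the paper instead gets strong $\Rightarrow$ linked for free from $H^0_G(\mu)\subseteq I(\mu)$ and linked $\Rightarrow$ weak from the socle filtration of $I(\mu)$. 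The real divergence is in the remaining implication. You derive linked $\Rightarrow$ strong from the good filtration of $I(\la)$ by induced supermodules together with the reciprocity $(I(\la):H^0_G(\sigma))=[H^0_G(\sigma):L_G(\la)]$; this is valid, but as you note it rests on the highest weight category structure of $G$-supermodules (good filtrations of injectives plus BGG reciprocity, available for $\GL(m|n)$ by Zubkov's work), which is a heavier input that must be cited precisely, and one should also note that $I(\la)$ need not be finite-dimensional, so the filtration argument has to be phrased for a possibly infinite good filtration. The paper instead proves weak $\Rightarrow$ strong more economically: given $Ext^1_G(L_G(\la),L_G(\mu))\neq 0$, it uses the contravariant duality furnished by the supertransposition $\tau$ (already set up via $V_G(\la)\cong H^0_G(\la)^{\langle\tau\rangle}$) to reduce to the case $\mu\ntriangleright\la$, then runs the standard argument of Jantzen II.2.14 to show $L_G(\mu)$ is a composition factor of $V_G(\la)$, hence of $H^0_G(\la)$. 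So your route buys a conceptually clean reciprocity argument at the cost of invoking the full highest weight category formalism, while the paper's route is more self-contained given the duality it has already established.
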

\begin{proof}
Assume $L_G(\la)$ is a composition factor of $H^0_G(\mu)$. Since $L_G(\mu)$ is the simple socle of $H^0_G(\mu)$, $L_G(\la)$ is a composition factor in $I(\mu)$ and $\la\sim \mu$.
This shows that the strong linkage implies the linkage.

If $L_G(\la)$ is a composition factor in $I(\mu)$, then there is a chain 
of simple supermodules $L_G(\la)=L_G(\la_1), \ldots, L_G(\mu)$ 
such that for each consecutive pair $L_G(\la_j)$ and $L_G(\la_{j+1})$ we have $Ext^1_G(L_G(\la_j), L_G(\la_{j+1}))\neq 0$. 
Therefore the linkage implies the weak linkage.

Finally, assume that $Ext^1_G(L_G(\la),L_G(\mu))\neq 0$. Since $Ext^1_G(L_G(\la), L_G(\mu))\simeq Ext^1_G(L_G(\mu), L_G(\la))$ using contravariant duality given by supertransposition 
$\tau$, we can assume that $\mu \ntriangleright \la$. Repeating the arguments in \cite{jan}, II, 2.14 we get $Hom_G(rad_G V(\la),L_G(\mu))\neq 0$ and $L_G(\mu)$ is a composition factor in $V_G(\la)$. Using contravariant duality again we conclude that $L_G(\mu)$ is a composition factor in $H^0_G(\la)$. Therefore the weak linkage implies the strong linkage.
\end{proof}

\subsection{Even linkage}

We say that dominant weights $\la$ and $\mu$ of $\GL(m|n)$ are even-linked and denote it by $\la\sim_{ev} \mu$ if and only if 
they belong to the same $G_{ev}$-block. This happens if and only if $\la^+\sim \mu^+$ are linked with respect to $\GL(m)$ and 
$\la^-\sim \mu^-$ are linked with respect to $\GL(n)$. 
In what follows we will denote by $d^+$ the $d$-exponent of 
$\la^+$ with respect to $\GL(m)$ and by $d^-$ the the $d$-exponent of $\la^-$ with respect to $\GL(n)$.
Since $G_{ev}\cong \GL(m)\times \GL(n)$, the blocks of group $G_{ev}$ are built from blocks of groups $\GL(m)$ and $\GL(n)$. 

The next statement shows that even linkage implies linkage. 

\begin{lm}
If $\la\sim_{ev} \mu$, then $\la\sim \mu$. 
\end{lm}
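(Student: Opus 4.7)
My plan is to reduce $\la\sim_{ev}\mu$ to a single elementary step of $G_{ev}$-linkage and transplant that step into a $G$-linkage using the $G_{ev}$-decomposition $H^0_G(\mu)|_{G_{ev}}\cong\bigoplus_{r=0}^{mn}F_r$ established in Section~1, whose $0$-floor $F_0=H^0_{G_{ev}}(\mu)$ is a $G_{ev}$-direct summand of $H^0_G(\mu)$.

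First I would reduce to an elementary step. The proof of Lemma~\ref{strong} depends only on highest-weight-category formalism and goes through verbatim for the reductive group $G_{ev}\cong GL(m)\times GL(n)$. Hence $\la\sim_{ev}\mu$ is generated by elementary steps of the form ``$L_{G_{ev}}(\sigma_1)$ is a $G_{ev}$-composition factor of $H^0_{G_{ev}}(\sigma_2)$''. Since $\sim$ is an equivalence relation and the supertransposition duality interchanges the two directions of such a step, it suffices to verify the conclusion for one such pair, which after renaming gives the hypothesis that $L_{G_{ev}}(\la)$ is a $G_{ev}$-composition factor of $H^0_{G_{ev}}(\mu)$.

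Next I would locate $\la$ inside a $G$-composition factor of $H^0_G(\mu)$. Because $F_0$ is a $G_{ev}$-direct summand, $L_{G_{ev}}(\la)$ is a $G_{ev}$-composition factor of $H^0_G(\mu)|_{G_{ev}}$. Refining any $G$-composition series of $H^0_G(\mu)$ to a $G_{ev}$-composition series, $L_{G_{ev}}(\la)$ must appear inside $L_G(\tau)|_{G_{ev}}$ for at least one $G$-composition factor $L_G(\tau)$ of $H^0_G(\mu)$; by the definition of blocks, $\tau\sim\mu$ in the $G$-block.

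It then remains to deduce $\la\sim\tau$. By the PBW decomposition $\Dist(G)=\Dist(U^-)\Dist(G_{ev})\Dist(U)$ and the annihilation of the highest weight vector of $L_G(\tau)$ by $\Dist(U)$, every $G_{ev}$-composition factor $L_{G_{ev}}(\sigma)$ of $L_G(\tau)|_{G_{ev}}$ satisfies $\tau-\sigma=\sum c_{ij}(\epsilon_i-\epsilon_{m+j})$ for nonnegative integers $c_{ij}$; in particular $\tau-\la$ is such a sum. The main obstacle will be converting this odd-root relation into a $G$-linkage $\la\sim\tau$. I would argue by induction on the odd-root height $\sum c_{ij}$: the base case $\tau=\la$ is immediate, and the inductive step will require producing a nontrivial $Ext^1_G$ between successive $G$-simples $L_G(\sigma)$ and $L_G(\sigma+(\epsilon_i-\epsilon_{m+j}))$ along a descent from $\tau$ to $\la$ inside $L_G(\tau)|_{G_{ev}}$, using the action of odd superderivations $_{lk}D$ analyzed in Section~2. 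Such nonvanishing $Ext^1_G$ gives weak linkage, which coincides with $G$-linkage by Lemma~\ref{strong}; in effect, this step establishes that the odd linkage foreshadowed in the introduction is contained in $G$-linkage, and combined with $\tau\sim\mu$ it yields $\la\sim\mu$ by transitivity.
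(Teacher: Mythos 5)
Your reduction to a single elementary step of $G_{ev}$-linkage is correct and matches the paper, but the argument breaks at exactly the point you flag as the main obstacle, and that obstacle cannot be overcome along the route you propose. Once you pass from ``$L_{G_{ev}}(\la)$ is a composition factor of $F_0=H^0_{G_{ev}}(\mu)$'' to ``$L_{G_{ev}}(\la)$ occurs in $L_G(\tau)|_{G_{ev}}$ for some $G$-composition factor $L_G(\tau)$ of $H^0_G(\mu)$'', you have discarded the only information distinguishing $\la$ from the highest weights of the many other $G_{ev}$-constituents of $H^0_G(\mu)$, and the statement you then need, $\la\sim\tau$, is false in general: a $G_{ev}$-constituent of a $G$-simple need not be $G$-linked to its highest weight. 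For instance, for $\tau$ typical in characteristic zero, $L_G(\tau)=H^0_G(\tau)$ decomposes by Berele--Regev into many $G_{ev}$-simples $L_{G_{ev}}(\sigma)$ with $\sigma\neq\tau$ (e.g. all the $\pi_{ij}$ of Proposition \ref{prim in F1} give such $\sigma=\la_{ij}$), while the $G$-block of $\tau$ is the singleton $\{\tau\}$; so the nonvanishing extensions $Ext^1_G(L_G(\sigma),L_G(\sigma+\epsilon_i-\epsilon_{m+j}))$ that your induction on odd-root height requires simply do not exist, and the same phenomenon persists for $p>2$. In effect the inductive step asks for an odd linkage of \emph{every} weight occurring in $L_G(\tau)|_{G_{ev}}$ to $\tau$, which is both unproven and stronger than anything the block theory allows.

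The paper's proof goes in the opposite direction and never leaves the zeroth floor. Writing $H^0_G(\mu)=ind_P^G\,H^0_{G_{ev}}(\mu)$ and using that $G/P$ is an affine superscheme, so that $ind_P^G$ is exact, a $G_{ev}$-composition series of $H^0_{G_{ev}}(\mu)$ induces a filtration of $H^0_G(\mu)$ by $G$-supersubmodules whose subquotients are $M(\sigma)=ind_P^G L_{G_{ev}}(\sigma)\cong L_{G_{ev}}(\sigma)\otimes S(Y)$. The highest weight vector of $L_{G_{ev}}(\la)$ inside the factor $M(\la)$ generates a $G$-subsupermodule isomorphic to $L_G(\la)$, so $L_G(\la)$ is a composition factor of $H^0_G(\mu)$ and $\la\sim\mu$ follows by strong linkage (Lemma \ref{strong}). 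The ingredient your proposal is missing is precisely this exactness of parabolic induction, which converts the hypothesis about $F_0$ into a statement about $G$-composition factors of $H^0_G(\mu)$ rather than about $G_{ev}$-constituents of $G$-simples.
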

\begin{proof}
The analogue of Lemma \ref{strong} is valid for $G_{ev}$ and it allows us to assume that $L_{G_{ev}}(\mu)$ is a composition factor of $H^0_{G_{ev}}(\la)$. 

We will view $G_{ev}$-modules as $P$-supermodules, where $P$ is a parabolic supergroup of $G$ corresponding to $G_{ev}$ and Borel subsupergroup $B$, 
via an epimorphism $P\to G_{ev}$. In particular, the supergroup $B$ acts via $B\to B_{ev}$ and the highest weight vector of $L_{G_{ev}}(\mu)$ is also the highest 
weight vector with respect to the $B$-action.  

Denote $M(\mu)=ind_P^G(L_{G_{ev}}(\mu))$. Then $L_{G_{ev}}(\mu)$ is a $G_{ev}$-submodule of $M(\mu)$, which is isomorphic to $L_{G_{ev}}(\mu)\otimes S(Y)$.
Therefore the highest weight vector of $L_{G_{ev}}(\mu)$ generates a $G$-supersubmodule of $M(\mu)$ which is isomorphic to $L_G(\mu)$.

According to \cite{z}, $G/P$ is an affine superscheme, and therefore the functor $ind_P^G$ is exact (even faithfully exact). Thus a composition series of $H^0_{G_{ev}}(\la)$ 
induces a filtration of $H^0_G(\la)=ind_P^G(H^0_{G_{ev}}(\la))$. Since the supermodule $M(\mu)$ is a factor in this filtration of $H^0_G(\la)$ and it contains $L_G(\mu)$, 
the module $L_G(\mu)$ is a composition factor in $H^0_G(\la)$.
\end{proof}

\subsection{The canonical alcove}

Next, we will consider the simplest case when the weight $\la$ is such that $H^0_{G_{ev}}(\la)$ is irreducible $G_{ev}$-module. In this case we are able to describe all weights of the first floor $F_1$ that are linked to $\la$.

Let $\rho$ be the weight of $G$ that is equal to the half sum of positive even roots of $G$ minus the half sum of its positive odd roots. Denote by $(.,.)$ the bilinear form on the set of weights $X(T)$ of $G$ defined by $(\epsilon_i,\epsilon_j)=\delta_{ij} (-1)^{|\epsilon_i|}$. For more on the bilinear product and roots in the superalgebra setting see \cite{kac} or \cite{cw}. Denote by $W_{af}$ the affine Weyl group of $G$ and consider its dot action on weights of $G$ given by $w\cdot \la = w(\la+\rho)-\rho$ for $w\in W_{af}$. 
The standard (canonical) alcove $C$ of $G$ is given as
\[C=\{\la\in X(T)\otimes_{\mathbb Z} \mathbb R| 0< (\la+\rho, \beta)<p \text{ for all } \beta \text{ that are positive coroots of } G\}.\]
It is classical result that for the closure $\overline{C}$ of $C$ the set $\overline{C}\cap X(T)$ is a fundamental domain for $W_{af}$ acting on $X(T)$ (see II.6.2 of \cite{jan}).
Since all composition factors $L_{G_{ev}}(\mu)$ of $H^0_{G_{ev}}(\la)$ satisfy $\mu \lhd \la$ and $C$ is the lowest alcove, we infer that $H^0_{G_{ev}}(\la)$ is irreducible if $\la\in C$.

\begin{lm} \label{canon}
Let $\la\in C$ and $k>0$. 
Let $\mu$ be a weight of $F_k$ and $Z$ be the $G_{ev}$-submodule of $F_k$ generated by all elements of $F_k$ of weight bigger than $\mu$. Then the
codimension of $Z_{\mu}$ in $(F_k)_\mu$ is not bigger than the number of admissible $(I|J)$ such that $\la_{I|J}=\mu$.
\end{lm}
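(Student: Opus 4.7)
The plan is to show that $(F_k)_\mu/Z_\mu$ is spanned by the images of the vectors $vy_{I|J}$ indexed by admissible $(I|J)$ with $\la_{I|J}=\mu$, which directly yields the stated bound on the codimension. The assumption $\la\in C$ guarantees that $H^0_{G_{ev}}(\la)$ is an irreducible $G_{ev}$-module, hence generated as a $U(\mathfrak{g}_{ev}^-)$-module by the highest weight vector $v$. Using the admissible basis of $\wedge^k Y$, every element of $(F_k)_\mu$ is a finite linear combination of terms $wy_{I|J}$ with $(I|J)$ admissible, $\la_{I|J}\geq\mu$, and $w\in(H^0_{G_{ev}}(\la))_{\la-(\la_{I|J}-\mu)}$. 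Summands with $\la_{I|J}=\mu$ force $w$ to be a scalar multiple of $v$, contributing at most the claimed number of spanning vectors, so the task reduces to showing that the remaining summands (with $\la_{I|J}>\mu$) lie in $Z$ modulo this span.

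The core claim is: for admissible $(I|J)$ with $\la_{I|J}>\mu$ and $w\in(H^0_{G_{ev}}(\la))_{\la-(\la_{I|J}-\mu)}$, one has
\begin{equation*}
wy_{I|J}\ \in\ Z\ +\ \mathrm{span}\{vy_{K|L}:(K|L)\text{ admissible},\ \la_{K|L}=\mu\}.
\end{equation*}
I would prove this by induction on $\gamma=\la_{I|J}-\mu>0$ in the partial order on positive-root sums of $G_{ev}$. Since $H^0_{G_{ev}}(\la)$ is simple, $w$ admits a decomposition $w=\sum_X c_X f_{\alpha(X)}w'_X$, where $f_{\alpha(X)}$ is a lowering operator for a simple positive root $\alpha(X)\leq\gamma$ of $G_{ev}$ and $w'_X\in(H^0_{G_{ev}}(\la))_{\la-(\gamma-\alpha(X))}$. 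Applying the primitive coproduct of the even element $f_{\alpha(X)}$ and rearranging gives
\begin{equation*}
(f_{\alpha(X)}w'_X)\,y_{I|J}\ =\ f_{\alpha(X)}\bigl(w'_Xy_{I|J}\bigr)\ -\ w'_X\bigl(f_{\alpha(X)}y_{I|J}\bigr).
\end{equation*}
The first term lies in $Z$ because $w'_Xy_{I|J}$ has weight $\mu+\alpha(X)>\mu$ and $Z$ is a $G_{ev}$-submodule. The second term expands as $\sum c_{K|L}w'_Xy_{K|L}$ over admissible $(K|L)$ with $\la_{K|L}=\la_{I|J}-\alpha(X)$; either $\gamma=\alpha(X)$, in which case $w'_X$ is proportional to $v$ and one obtains a combination of the permitted generators $vy_{K|L}$ of weight $\mu$, or $\gamma>\alpha(X)$, in which case the inductive hypothesis applies at $\la_{K|L}-\mu=\gamma-\alpha(X)<\gamma$.

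The principal obstacle is the bookkeeping: verifying that $f_{\alpha(X)}$ acting on the wedge factor $y_{I|J}$ produces a sum indexed by admissible $(K|L)$ of the correct weights (after Leibniz-style derivation, reordering with appropriate signs, and possible cancellations when factors coincide), and that the decomposition $w=\sum_X c_X f_{\alpha(X)}w'_X$ genuinely exhausts the weight space $(H^0_{G_{ev}}(\la))_{\la-\gamma}$ (which follows from $v$ generating $H^0_{G_{ev}}(\la)$ under $U(\mathfrak{g}_{ev}^-)$ together with the PBW theorem). Once this bookkeeping is carried out, the induction establishes that $(F_k)_\mu/Z_\mu$ is spanned by the classes of $\{vy_{I|J}:(I|J)\text{ admissible},\ \la_{I|J}=\mu\}$, yielding the asserted upper bound on the codimension.
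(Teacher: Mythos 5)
Your overall strategy is the same as the paper's: reduce a spanning vector $wy_{I|J}$ of $(F_k)_\mu$ with $w$ of non-highest weight to elements of $Z$ plus leading terms $vy_{K|L}$, using irreducibility of $H^0_{G_{ev}}(\la)$ (which holds since $\la\in C$), the Leibniz rule for even superderivations on products, and the fact that $Z$ is a $G_{ev}$-submodule. The one step that does not hold as stated is the decomposition $w=\sum_X c_X f_{\alpha(X)}w'_X$ with single simple lowering operators $f_{\alpha(X)}$. This section works over a field of characteristic $p>2$: irreducibility of $H^0_{G_{ev}}(\la)$ gives that $v$ generates it under $\Dist(G_{ev})$, i.e.\ under the hyperalgebra including divided powers $f_\beta^{(r)}$, not under the image of $U(\mathfrak{g}_{ev}^-)$, which is a proper subalgebra. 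In general a weight space of a simple module in characteristic $p$ need not equal the sum of the images of the simple lowering operators applied to the adjacent higher weight spaces (for $SL_2$ the Frobenius twist $L(1)^{[1]}$ has $f\cdot v=0$ but $f^{(p)}\cdot v\neq 0$), so ``generation under $U(\mathfrak{g}_{ev}^-)$ plus PBW'' does not by itself deliver your decomposition.

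The gap is repairable precisely because $\la\in C$: for $\la$ in the lowest alcove every root string in $H^0_{G_{ev}}(\la)$ has length less than $p$, so the divided powers $f_\beta^{(r)}$ with $r\geq p$ annihilate the module and $\Dist(G_{ev})^-$ acts through the subalgebra generated by the simple $f_\alpha$; with that observation added, your induction on $\gamma=\la_{I|J}-\mu$ goes through, and the remaining ``bookkeeping'' you describe is indeed routine. The paper sidesteps the issue by writing $z=v\zeta$ with $\zeta\in\Dist(G_{ev})$ arbitrary (divided powers allowed), applying $\zeta$ to $vy_{K|L}\in Z$ in a single step via the coproduct, and inducting on the height $ht(K|L)$ of the wedge factor rather than on the weight defect; you may prefer that formulation, since it invokes $\la\in C$ only through irreducibility.
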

\begin{proof}
The space $(F_k)_{\mu}$ is a span of vectors of weight $\mu$ of type $zy_{K|L}$ where $z\in H^0_{G_{ev}}(\la)$, and
the weight $\kappa \unlhd \la$ of $z$ and the weight $\nu$ of $y_{K|L}$ satisfy $\kappa+\nu=\mu$. 

Assume $\kappa\neq \la$ and write $z\in H^0_G(\la)$ as $z=v\zeta$, where $\zeta\in \Dist(G_{ev})$.
This is possible because $H^0_{G_{ev}}(\la)$ is an irreducible $G_{ev}$-module.
Then $vy_{K|L}\in F_k$ and its weight $\gamma$ satisfies $\mu \lhd \gamma$. Therefore $vy_{K|L}\in Z$.
Apply $\zeta$ to get $(vy_{K|L})\zeta= zy_{K|L}$+ terms $w_{M|N} y_{M|N}$, where $w_{M|N}\in H^0_{G_{ev}}(\la)$ and 
$ht(y_{M|N})<ht(y_{K|L})$. 
Using induction on the height of $y_{K|L}$ we conclude that the codimension of $Z_{\mu}$ in $(F_k)_\mu$ is not bigger than 
the number of admissible $(I|J)$ such that $\la_{I|J}=\mu$.
\end{proof}

\begin{cor}\label{baza}
Assume $\la\in C$ and a weight $\mu$ is such that $\pi_{I|J}\in H^0_G(\la)$ for all admissible $(I|J)$ such that $\la_{I|J}=\mu$.  
Then such elements $\pi_{I|J}$ form a basis of $G_{ev}$-primitive vectors of weight $\mu$ in $H^0_G(\la)$.
\end{cor}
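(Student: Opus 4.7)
The plan is to establish $\dim P = N$, where $P$ denotes the space of $G_{ev}$-primitive vectors of weight $\mu$ in $H^0_G(\la)$ and $N$ is the number of admissible $(I|J)$ with $\la_{I|J}=\mu$. The lower bound $\dim P \geq N$ is essentially in hand: by Lemma \ref{Fotimes} (via Lemma \ref{exterior}) each $\pi_{I|J}$ is a $G_{ev}$-primitive vector of weight $\mu$, and it lies in $H^0_G(\la)$ by the standing hypothesis of the corollary; Lemma \ref{LI} then provides linear independence.

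For the upper bound, I would first observe that every $G_{ev}$-primitive vector of weight $\mu$ in $H^0_G(\la)$ automatically sits in the appropriate floor $F_k$, so it suffices to bound the number of $G_{ev}$-primitives of weight $\mu$ in $F_k$ itself by $N$. The key input is that $F_k = H^0_{G_{ev}}(\la) \otimes \wedge^k Y$ has a good filtration as a $G_{ev}$-module: $H^0_{G_{ev}}(\la)$ is itself a $\nabla$-module, $Y \cong V_m^* \otimes V_n$ has a good filtration (using $V_m^* \cong H^0_{\GL(m)}(\omega_{m-1}) \otimes \de^{-1}$ and $V_n = H^0_{\GL(n)}(\omega_1)$), and tensor products and exterior powers preserve the good filtration property over $G_{ev} = \GL(m) \times \GL(n)$. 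Arranging the filtration of $F_k$ with highest weights weakly decreasing, the submodule $Z$ of $F_k$ generated by all weight-$\rhd\mu$ vectors coincides with the initial segment collecting precisely those $\nabla(\sigma)$-quotients with $\sigma \rhd \mu$. Hence $F_k/Z$ has a good filtration by $\nabla(\sigma)$ with $\sigma \unlhd \mu$, and only the $\nabla(\mu)$-quotients contribute to the weight-$\mu$ space of $F_k/Z$. Standard $\mathrm{Ext}$-vanishing for $V_{G_{ev}}(\mu)$ against $\nabla(\sigma)$ then identifies the number of $G_{ev}$-primitives of weight $\mu$ in $F_k$ with $[F_k : \nabla(\mu)]$, and also with $\dim (F_k)_\mu / Z_\mu$, which Lemma \ref{canon} bounds by $N$.

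The main obstacle is justifying the good filtration of $\wedge^k Y$ in positive characteristic, since this is not a direct consequence of the elementary Donkin--Mathieu theorem on tensor products. One needs either Boffi's theorem on exterior powers of modules with good filtrations, or the Cauchy-type decomposition $\wedge^k(V_m^* \boxtimes V_n) \cong \bigoplus_{\nu \vdash k} L_\nu(V_m^*) \boxtimes L_{\nu'}(V_n)$ combined with Akin--Buchsbaum--Weyman's result that Schur functors preserve good filtrations. Once this ingredient is available, the remainder of the argument is structural good-filtration bookkeeping together with Lemma \ref{canon}, and uses the assumption $\la \in C$ only through the irreducibility $H^0_{G_{ev}}(\la) = L_{G_{ev}}(\la)$ that is already exploited in the proof of Lemma \ref{canon}.
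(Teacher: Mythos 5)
Your argument is correct in substance, but it follows a genuinely different and more explicit route than the paper, whose entire proof is the single line ``Use Lemma~\ref{LI}''. The paper supplies only the linear independence of the $\pi_{I|J}$ and tacitly relies on Lemma~\ref{canon} for completeness; but Lemma~\ref{canon} by itself only bounds the codimension of $Z_\mu$ in $(F_k)_\mu$, and one still has to rule out a nonzero $G_{ev}$-primitive vector of weight $\mu$ lying \emph{inside} $Z_\mu$ (in general a submodule generated by higher weight spaces can very well contain primitive vectors of lower weight, e.g.\ the socle of a Weyl module). Your good-filtration argument is precisely what closes that gap: identifying the space of primitive vectors of weight $\mu$ with $\Hom_{G_{ev}}(V_{G_{ev}}(\mu),F_k)$, which equals $[F_k:\nabla(\mu)]$ once $F_k=H^0_{G_{ev}}(\la)\otimes\wedge^kY$ is known to have a good filtration, and then comparing with $\dim (F_k)_\mu/Z_\mu\leq N$ from Lemma~\ref{canon} and the lower bound $N$ from Lemma~\ref{LI}. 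The price is the nontrivial input that $\wedge^k(V_m^*\otimes V_n)$ has a good filtration (Akin--Buchsbaum--Weyman/Boffi via the Cauchy filtration, plus Donkin--Mathieu for the tensor product), which you correctly isolate as the main external ingredient; the paper never invokes this (its Lemma~\ref{first-floor} treats only $F_1$). One small imprecision: the submodule $Z$ generated by the weight spaces $(F_k)_\nu$ with $\nu\rhd\mu$, $\nu\neq\mu$, need not \emph{coincide} with the good-filtration segment whose factors are the $\nabla(\sigma)$ with $\sigma\not\unlhd\mu$ --- it is only \emph{contained} in it --- so the asserted equality $[F_k:\nabla(\mu)]=\dim(F_k)_\mu/Z_\mu$ should be the inequality $[F_k:\nabla(\mu)]\leq\dim(F_k)_\mu/Z_\mu$; since the whole chain is squeezed between $N$ and $N$, this does not affect the conclusion. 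What your approach buys is a genuinely complete proof of the upper bound; what the paper's terse approach buys is brevity, at the cost of leaving the exclusion of primitive vectors from $Z_\mu$ unaddressed.
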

\begin{proof}
Use Lemma \ref{LI}.
\end{proof}

\begin{lm}\label{Nakayama1}
Assume $char(K)=p>2$. If the weights $\la_{ij}$ and $\la_{kl}$ are $G_{ev}$-linked, then 
$\la^+_i-i\equiv \la^+_k-k \pmod p$ and $\la^-_j-j\equiv \la^-_l-l \pmod p$. 
Consequently, if $\la_{ij}$ and $\la_{kl}$ are $G_{ev}$-linked, then $\omega_{ij}\equiv 0 \pmod p$
if and only if $\omega_{kl}\equiv 0 \pmod p$.
\end{lm}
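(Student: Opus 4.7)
The plan is to translate the $G_{ev}$-linkage of $\la_{ij}$ and $\la_{kl}$ into two separate linkage conditions and then read off the desired congruences from Donkin's description of blocks (Proposition~\ref{Donkin}). Since $G_{ev}\cong \GL(m)\times \GL(n)$, by the definition of even-linkage one has $\la_{ij}\sim_{ev}\la_{kl}$ if and only if the $\GL(m)$-weights $\la^{+}-\delta^{+}_{i}$ and $\la^{+}-\delta^{+}_{k}$ are linked in $\GL(m)$, and simultaneously the $\GL(n)$-weights $\la^{-}+\delta^{-}_{j}$ and $\la^{-}+\delta^{-}_{l}$ are linked in $\GL(n)$. (If the weights are not polynomial, one may first twist by an appropriate power of the determinant to reduce to the polynomial case covered by Proposition~\ref{Donkin}.) Applying Proposition~\ref{Donkin} to each of these, the multisets of shifted residues modulo $p^{d^{+}+1}$ (respectively modulo $p^{d^{-}+1}$) on the two sides coincide, and in particular they coincide modulo~$p$.

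Now I focus on the $+$-side; the minus-side is symmetric. Set $x_a=\la^{+}_a-a\pmod p$ and let $f(t)=\#\{a:x_a\equiv t\pmod p\}$. Decreasing the $i$-th part by one replaces the value $x_i$ in this multiset by $x_i-1$, so the multiset of residues coming from $\la^{+}-\delta^{+}_{i}$ has multiplicity function $f(t)-[t\equiv x_i]+[t\equiv x_i-1]$, and similarly with $i$ replaced by $k$. Equating these two multiplicity functions leads to the identity
\[
[t\equiv x_i-1]-[t\equiv x_i]=[t\equiv x_k-1]-[t\equiv x_k] \pmod{\text{for all }t}.
\]
The main (only nontrivial) step is to check that this forces $x_i\equiv x_k\pmod p$. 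Indeed, if $x_i\not\equiv x_k\pmod p$, evaluation at $t=x_i$ gives $-1=[x_i\equiv x_k-1]-[x_i\equiv x_k]=[x_i\equiv x_k-1]$, which is impossible unless $x_i\equiv x_k-1$; but then evaluation at $t=x_i$ actually gives $-1-1=-2$, so we would need $p\mid 2$, contradicting $p>2$. Thus $x_i\equiv x_k\pmod p$, i.e.\ $\la^{+}_i-i\equiv \la^{+}_k-k\pmod p$. The very same multiset argument applied to $\la^{-}+\delta^{-}_{j}$ and $\la^{-}+\delta^{-}_{l}$ (where the change is $x_j\mapsto x_j+1$ instead of $x_j\mapsto x_j-1$) yields $\la^{-}_j-j\equiv \la^{-}_l-l\pmod p$.

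For the consequence, write
\[
\omega_{ij}=(\la^{+}_i-i)+(\la^{-}_j-j)+m+1,
\]
so that $\omega_{ij}-\omega_{kl}=(\la^{+}_i-i-\la^{+}_k+k)+(\la^{-}_j-j-\la^{-}_l+l)\equiv 0\pmod p$ by the previous step. Hence $\omega_{ij}\equiv 0\pmod p$ if and only if $\omega_{kl}\equiv 0\pmod p$. The main obstacle in this plan is the subtlety that the collision pattern of the residues $x_a\pmod p$ could in principle allow the bookkeeping of $\pm 1$ shifts to cancel in unexpected ways; this is precisely what is handled by the case analysis on whether $x_i\equiv x_k$ or $x_i\equiv x_k\pm 1\pmod p$, and it is here that the hypothesis $p>2$ is genuinely used.
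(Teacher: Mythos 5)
Your proof is correct. It shares the paper's overall skeleton --- split the $G_{ev}$-linkage into a $\GL(m)$-linkage of $\la^+-\delta^+_i$, $\la^+-\delta^+_k$ and a $\GL(n)$-linkage of $\la^-+\delta^-_j$, $\la^-+\delta^-_l$, then invoke Proposition \ref{Donkin} --- but the key combinatorial step is carried out differently. The paper works directly with the permutation $\sigma$ furnished by Proposition \ref{Donkin}: it decomposes $\sigma$ into cycles, rules out the possibility that the cycle through $i$ misses $k$ by summing the congruences around that cycle (getting $-1\equiv 0$), and then chains the congruences along the cycle from $i$ to $k$. You instead recast Donkin's condition as equality of the multisets of residues $\la^+_a-a \bmod p$ and compare integer-valued multiplicity functions; this avoids the cycle decomposition entirely and is arguably cleaner. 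Two small remarks. First, in your case analysis the equation $-1=[x_i\equiv x_k-1]$ is already impossible over $\mathbb{Z}$, since the right-hand side is $0$ or $1$ and multiplicities are genuine integer counts; the digression about needing $p\mid 2$ is superfluous, and your closing claim that $p>2$ is ``genuinely used'' at that point is not accurate --- the hypothesis is ambient in the paper but plays no role in this step (what you do need is $x_i\not\equiv x_i-1$, i.e.\ $p\neq 1$). Second, your parenthetical about twisting by a power of the determinant to reduce to polynomial weights is a point of care (relevant when $\la^+_m=0$) that the paper's own proof passes over silently, so it is a welcome addition rather than a detour.
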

\begin{proof}
We have $\la_{ij}=(\la^+_1, \ldots , \la^+_i-1, \ldots , \la^+_m|\la^-_1, \ldots, \la^-_j+1, \ldots, \la^-_n)$ and 
$\la_{kl}=(\la^+_1, \ldots , \la^+_k-1, \ldots , \la^+_m|\la^-_1, \ldots, \la^-_l+1, \ldots, \la^-_n)$. We will show that
if $\mu=(\la^+_1, \ldots , \la^+_i-1, \ldots , \la^+_m)$ and $\nu=(\la^+_1, \ldots , \la^+_k-1, \ldots , \la^+_m)$ are $GL(m)$-linked,
then $\la^+_i-i\equiv \la^+_k-k \pmod p$; the proof of the claim about $\la^-_j-j\equiv \la^-_l-l \pmod p$ is analogous.

If $i=k$, then $\la^+_i-i\equiv \la^+_k-k \pmod p$ is trivial. Assume $i\neq k$. If $\mu \sim \nu$, then by Proposition \ref{Donkin} there
is a permutation $\sigma$ of the set $\{1, \ldots, m\}$ such that 
$\mu_j-j\equiv \nu_{\sigma(j)}-\sigma(j)\pmod {p}$. Since $\la^+_i-1-i \not\equiv \la^+_i-i \pmod{p}$, we must have $\sigma(i)\neq i$. Decompose 
$\sigma$ into a product of disjoint cycles and denote the cycle containing $i$ by $\tau=(i_1, \ldots , i_s)$. If $\tau$ does not contain $k$, 
$\sum_{t=1}^s \mu_{i_s} = \sum_{t=1}^s \nu_{i_s} -1$ contradicting $\sum_{t=1}^s (\mu_{i_s} - i_s) \equiv \sum_{t=1}^s (\nu_{i_s} - i_s) \pmod p$.
Therefore we can assume that $i_1=i$ and $i_t=k$. There is a sequence of congruences modulo $p$:
$\la^+_i-1-i=\mu_i-i\equiv \nu_{i_2}-i_2 = \mu_{i_2}-i_2 \pmod p$, 
$\mu_{i_2}-i_2\equiv \nu_{i_3}-i_3 =\mu_{i_3}-i_3 \pmod p$, \ldots, 
$\mu_{i_{t-1}}-i_{t-1} \equiv \nu_k -k =\la^+_k-1-k \pmod p$. By transitivity we conclude $\la^+_i-i\equiv \la^+_k-k \pmod p$.

The second part of the Lemma follows from the definition of $\omega_{ij}$ and $\omega_{kl}$.
\end{proof}

Note that if $\la$ is atypical and $w\in W_{af}$, then $w\cdot \la$ is again atypical.

\begin{teo}\label{link1} Assume the weights $\la$ and $\la_{ij}$ are dominant. If $char(K)=0$, then the simple module $L_G(\la_{ij})$ 
is a composition factor of $H^0_G(\la)$ if and only if $\omega_{ij}=0$.
If $char(K)=p>2$ and $\la\in C$, then the module $L_G(\la_{ij})$ is a composition factor of $H^0_G(\la)$ if and only if $\omega_{ij}\equiv 0 \pmod p$. 
In this case, in an appropriate
subquotient of $H^0_G(\lambda)$, the image of the primitive vector $\pi_{ij}$ is the highest weight vector for the composition factor $L_G(\la_{ij})$.
\end{teo}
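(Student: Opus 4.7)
The plan is to show that the element $\pi_{ij}\in F_1\subseteq H^0_G(\la)$ constructed in Proposition \ref{prim in F1} lies in the socle $L_G(\la)$ precisely when $\omega_{ij}$ is invertible in $K$, and that when it is not, the class $\bar\pi_{ij}$ in $H^0_G(\la)/L_G(\la)$ is a $B$-primitive vector of weight $\la_{ij}$ generating a composition factor isomorphic to $L_G(\la_{ij})$. First I record the setup common to both parts: $H^0_{G_{ev}}(\la)$ is irreducible (automatic in characteristic zero, and in characteristic $p$ because $C$ is the lowest alcove), so $F_0\subseteq L_G(\la)$, and by Lemma \ref{morr} the intersection $L_G(\la)\cap F_r$ equals $\mathrm{image}(\phi_r)$ for every $r$. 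By Lemma \ref{first-floor} every factor in the good filtration of $F_1$ has multiplicity one, so $\pi_{ij}$ is the unique $B_{ev}$-primitive vector of weight $\la_{ij}$ in $F_1$ up to scalar; the weight $\la_{ij}$ cannot occur as $\la_{I|J}$ for admissible $(I|J)$ of length $\geq 2$, and the only $B_{ev}$-primitive vector in $H^0_{G_{ev}}(\la)=F_0$ has weight $\la\neq\la_{ij}$, so $\pi_{ij}$ is in fact the unique $B_{ev}$-primitive vector of weight $\la_{ij}$ in all of $H^0_G(\la)$.

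The central step is the equivalence $\pi_{ij}\in L_G(\la)\iff\omega_{ij}$ is invertible in $K$. The forward direction is immediate from Proposition \ref{phi1}: if $\omega_{ij}$ is a unit, then $\pi_{ij}=\omega_{ij}^{-1}\phi_1(\pi_{ij})\in\mathrm{image}(\phi_1)\subseteq L_G(\la)$. For the converse, suppose $\omega_{ij}\equiv 0$ and $\pi_{ij}=\phi_1(x)$ for some $x\in F_1$ of weight $\la_{ij}$. Let $Z$ be the $G_{ev}$-submodule of $F_1$ generated by weight spaces of weight strictly greater than $\la_{ij}$. By Lemma \ref{canon} the quotient $(F_1)_{\la_{ij}}/Z_{\la_{ij}}$ is one-dimensional and spanned by the class of $\pi_{ij}$, so $x=c\pi_{ij}+z$ with $z\in Z$; since $\phi_1$ is a $G_{ev}$-morphism it preserves $Z$, giving $\phi_1(x)=c\omega_{ij}\pi_{ij}+\phi_1(z)\in Z$, contradicting $\pi_{ij}\notin Z$. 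Hence $\pi_{ij}\notin L_G(\la)$, so $\bar\pi_{ij}\neq 0$.

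Next I would verify that $\bar\pi_{ij}$ is in fact $B$-primitive in $H^0_G(\la)/L_G(\la)$. It is already $B_{ev}$-primitive, and to handle the odd lower-triangular superderivations ${}_{lk}D$ with $1\leq k\leq m$ and $m+1\leq l\leq m+n$ I would apply Theorem \ref{gen} (which kills every bideterminant coefficient of $\pi_{ij}$) together with Lemma \ref{1'}, which gives $(y_{r,m+s}){}_{lk}D=\delta_{rk}\delta_{m+s,l}$; the superderivation rule then collapses $(\pi_{ij}){}_{lk}D$ to a single coefficient of $\pi_{ij}$, sitting in $F_0\subseteq L_G(\la)$, so it vanishes in the quotient. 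Thus $\bar\pi_{ij}$ is $B$-primitive of weight $\la_{ij}$, and the $G$-submodule it generates is a homomorphic image of the Weyl supermodule $V_G(\la_{ij})$, whose simple head $L_G(\la_{ij})$ appears as a composition factor of $H^0_G(\la)$; this also verifies the final clause of the theorem.

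For the ``only if'' direction, if $L_G(\la_{ij})$ is a composition factor then some subquotient of $H^0_G(\la)$ contains a $B$-primitive vector of weight $\la_{ij}$, which lifts to a $B_{ev}$-primitive vector of weight $\la_{ij}$ in $H^0_G(\la)$; by the uniqueness established in the first paragraph this vector is a scalar multiple of $\pi_{ij}$. When $\omega_{ij}$ is invertible the key equivalence places $\pi_{ij}$ in the simple socle $L_G(\la)$, of highest weight $\la\neq\la_{ij}$, so it cannot account for a composition factor of weight $\la_{ij}$; in characteristic zero one may alternatively invoke Kac's typicality criterion to reach the same conclusion. The main obstacle is this last step: rigorously propagating the uniqueness of $B_{ev}$-primitive vectors of weight $\la_{ij}$ through the successive layers of the socle filtration of $H^0_G(\la)$ in characteristic $p$, which I would handle by induction based on the decomposition $L_G(\la)=F_0\oplus\bigoplus_{r\geq 1}\mathrm{image}(\phi_r)$ coming from Lemma \ref{morr} and the fact that each $\phi_r$ is a $G_{ev}$-morphism.
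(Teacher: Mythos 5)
Your ``if'' direction and the whole characteristic-zero case are sound and run essentially parallel to the paper: the identity $\phi_1(\pi_{ij})=\omega_{ij}\pi_{ij}$ from Proposition \ref{phi1} together with $L_G(\la)\cap F_1=\mathrm{image}(\phi_1)$ decides whether $\pi_{ij}$ lies in the socle, and your verification via Theorem \ref{gen} and Lemma \ref{1'} that $(\pi_{ij})_{lk}D$ lands in $F_0\subseteq L_G(\la)$ for the odd lowering operators is exactly what is needed to make $\bar\pi_{ij}$ a $B$-primitive vector in the quotient. Your argument that $\pi_{ij}\notin\mathrm{image}(\phi_1)$ when $\omega_{ij}\equiv 0$, via Lemma \ref{canon} and the observation that $\phi_1$ preserves the submodule $Z$, is if anything more explicit than the paper's.

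The gap is in the ``only if'' direction in characteristic $p$. You need: if $L_G(\la_{ij})$ occurs in a subquotient $N/N'$ of $H^0_G(\la)$, then there is a $B_{ev}$-primitive vector of weight $\la_{ij}$ in $H^0_G(\la)$ itself (necessarily a multiple of $\pi_{ij}$) detecting it. But a preimage $u\in N$ of the highest weight vector of $N/N'$ only satisfies $(u)_{ab}D\in N'$ for $a>b$; it is primitive modulo $N'$, not primitive, and the obstruction to correcting it is $Ext^1_{G_{ev}}(V_{G_{ev}}(\la_{ij}),N')$, which has no reason to vanish for an arbitrary submodule $N'$. In characteristic zero, complete reducibility of $H^0_G(\la)|_{G_{ev}}$ kills this obstruction, which is why that case goes through; in characteristic $p$ it does not, and your proposed induction on the decomposition $L_G(\la)\cap F_r=\mathrm{image}(\phi_r)$ does not address it, because the composition factor may sit in a layer where the weight-$\la_{ij}$ vectors are not $B_{ev}$-primitive in the ambient module. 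The paper avoids lifting altogether: it interposes the submodule $M_1$ generated by $v$ and all $\pi_{kl}$ with $\omega_{kl}\equiv 0\pmod p$, shows that a composition factor $L_G(\la_{ij})$ of $M_1/L_G(\la)$ forces $\la_{ij}$ to be $G_{ev}$-linked to some $\la_{kl}$ with $\omega_{kl}\equiv 0$ (whence $\omega_{ij}\equiv 0$ by the even-linkage Lemma \ref{Nakayama1}, an ingredient absent from your proposal), and rules out factors of $H^0_G(\la)/M_1$ by a weight-space count: for a maximal offending $\la_{ij}$ one gets $(M_1)_{\la_{ij}}\supseteq(Z_1)_{\la_{ij}}+K\pi_{ij}=(F_1)_{\la_{ij}}$, so the weight $\la_{ij}$ does not survive to that quotient at all. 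Some argument of this kind, replacing the lifting step, is needed to complete your proof.
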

\begin{proof}
Assume first $char(K)=0$. If $\omega_{ij}=0$, then the image of $\phi_1$ does not contain the primitive vector $\pi_{ij}$.
Therefore $\pi_{ij}$ generates a submodule of $H^0_G(\la)$ that is not contained in $L_G(\la)$ and has the highest weight $\la_{ij}$.
Hence the simple module $L_G(\la_{ij})$ belongs to the composition series of $H^0_G(\la)$.

The module $H^0_G(\la)$ considered as a $G_{ev}$-module is completely reducible. 
If $\omega_{ij}\neq 0$, then $\pi_{ij}$ belongs to $L_G(\la)$ and 
since $\pi_{ij}$ is the only $G_{ev}$-primitive vector of weight $\la_{ij}$ in $H^0_G(\la)$, 
$L_G(\la_{ij})$ is not a composition factor of $H^0_G(\la)$.

Now assume that $char(K)=p>2$. Denote by $\langle S\rangle_{ev}$ the $G_{ev}$-module generated by the set $S$. If $\omega_{ij}\equiv 0 \pmod p$, then 
$\langle \pi_{ij} \rangle_{ev}$ belongs to the kernel of the map $\phi_1$, and therefore its intersection with $L_G(\la)$ is zero.
The $G$-module $M$ generated by $v$ and $\pi_{ij}$ is a submodule of $H^0_G(\la)$ containing $L_G(\la)$. Since the factormodule $M/L_G(\la)$ 
has the highest vector $\pi_{ij}$, this implies that $L_G(\la_{ij})$ belongs to the composition series of $H^0_G(\la)$.

Further, denote by $M_1$ the $G$-subsupermodule of $H^0_G(\la)$ generated by all vectors of weights $\la$ and 
$\la_{kl}$ for $\omega_{kl}\equiv 0 \pmod p$ and denote by $M_1^{ev}$ the $G_{ev}$-submodule of $F_1$ generated by all vectors of weights 
$\la_{kl}$ for $\omega_{kl}\equiv 0 \pmod p$. Since $L_G(\la)$ is a socle of $H^0_G(\la)$, it is also a socle of $M_1$.

We will show that $M_1/L_G(\la) \cap F_1 \cong M_1^{ev}/(L_G(\la)\cap F_1)$ as $G_{ev}$-modules.
Use Poincare-Birkhoff-Witt theorem and order the superderivations in the following way. Start with odd $_{ab}D$ where $a>b$, continue with odd $_{ab}D$ with $a<b$ followed by 
even $_{ij}D$, $_{ij}^{(r)}D$ and $\binom{_{ii}D}{r}$. Any composition of odd $_{ab}D$ where $a>b$ applied to generators of $M_1$ gives elements of $F_0$, 
which belong
to $L_G(\la)$ because $\la\in C$. Any composition of odd $_{ab}D$ where $a<b$ applied to generators of $M_1$ either belongs to $L_G(\la)$ 
or belongs to higher floors $F_k$ for $k>1$.
Since application of any composition of even $_{ij}D$, $_{ij}^{(r)}D$ and $\binom{_{ii}D}{r}$ preverves each floor $F_k$, the module 
$M_1/L_G(\la) \cap F_1$ is generated by $G_{ev}$-action on vectors of weight $\la_{kl}$ such that $\omega_{kl}=0$. Therefore 
$M_1/L_G(\la) \cap F_1 \cong M_1^{ev}/(L_G(\la)\cap F_1)$.

Using this isomorphism we infer that if $L_G(\la_{ij})$ is a composition factor of $M_1/L_G(\la)$, then $\la_{ij}$ is $G_{ev}$-linked to some 
$\la_{kl}$ such that $\omega_{kl}\equiv 0 \pmod p$.
In this case we have $\omega_{ij}\equiv 0 \pmod p$ by Lemma \ref{Nakayama1}.

Assume that $L_G(\la_{ij})$ is a composition factor of 
$H^0_G(\la)/M_1$ such that $\omega_{ij}\not\equiv 0 \pmod p$ and a weight $\la_{ij}$ is maximal with such property.
Denote by $Z_1$ the $G_{ev}$-submodule of $F_1$ spanned by all vectors of weight $\mu \rhd \la_{ij}$ in $F_1$ such that 
$\mu\neq \la_{ij}$. 
By Lemma \ref{canon} the codimension of $(Z_1)_{\la_{ij}}$ in $(F_1)_{\la_{ij}}$ equals one. 
Moreover, $\pi_{ij}\notin Z_1$.
Since $\omega_{ij}\not\equiv 0 \pmod p$, the vector $\pi_{ij}\in L_G(\la)\subset M_1$. Additionally, $Z_1\subset M_1$ and therefore 
$(M_1)_{\la_{ij}}=(F_1)_{\la_{ij}}$, which is a contradiction with $L_G(\la_{ij})$ being a composition factor of $H^0_G(\la)/M_1$. 
Therefore $L_G(\la_{ij})$ is in the composition series of $H^0_G(\la)$ only if $\omega_{ij}\equiv 0 \pmod p$.   
\end{proof}

The composition factors of $M(\la)$ are described as follows.

\begin{cor}\label{m1}
Assume $\la\in C$ and a weight $\la_{ij}$ is dominant. 
The simple supermodule $L_G(\la_{ij})$ is a composition factor of $M(\la)$ if and only if $\pi_{ij}\in M(\la)$ and $\omega_{ij}\equiv 0 \pmod p$.
\end{cor}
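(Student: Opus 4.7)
The plan is to deduce the corollary from Theorem \ref{link1}, Proposition \ref{phi1}, Lemma \ref{morr}, and Corollary \ref{baza}, by identifying the highest weight vector of the composition factor $L_G(\la_{ij})$ of $M(\la)$ with (a scalar multiple of) $\pi_{ij}$. Throughout I interpret $M(\la)$ as the $G$-subsupermodule $M_1$ appearing in the proof of Theorem \ref{link1}.

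For sufficiency, suppose $\pi_{ij}\in M(\la)$ and $\omega_{ij}\equiv 0\pmod p$. By Proposition \ref{phi1}, $\phi_1(\pi_{ij})=\omega_{ij}\pi_{ij}=0$, and by Lemma \ref{morr} the image of $\phi_1$ is precisely $L_G(\la)\cap F_1$, so $\pi_{ij}\notin L_G(\la)$. Since $\la\in C$, the module $H^0_{G_{ev}}(\la)=F_0$ is irreducible and therefore $F_0\subseteq L_G(\la)\subseteq M(\la)$. Any odd superderivation $_{lk}D$ with $m+1\leq l\leq m+n$ and $1\leq k\leq m$ carries $\pi_{ij}\in F_1$ into $F_0\subseteq L_G(\la)$. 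Combined with the $G_{ev}$-primitivity of $\pi_{ij}$, this shows that the image of $\pi_{ij}$ in the quotient $M(\la)/L_G(\la)$ is annihilated by every negative root superderivation; it is therefore a nonzero highest-weight vector of weight $\la_{ij}$, and the $G$-submodule it generates forces $L_G(\la_{ij})$ to appear as a composition factor of $M(\la)/L_G(\la)$, hence of $M(\la)$.

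For necessity, suppose $L_G(\la_{ij})$ is a composition factor of $M(\la)\subseteq H^0_G(\la)$. Theorem \ref{link1} immediately yields $\omega_{ij}\equiv 0\pmod p$. To obtain $\pi_{ij}\in M(\la)$, I would argue that the existence of the composition factor forces $(M(\la))_{\la_{ij}}$ to contain a $G_{ev}$-primitive vector. Because the weight $\la_{ij}$ only appears in the floor $F_1$, and because by Corollary \ref{baza} the $G_{ev}$-primitive vectors of weight $\la_{ij}$ in $H^0_G(\la)\cap F_1$ form a one-dimensional space spanned by $\pi_{ij}$, any such primitive vector must be a nonzero scalar multiple of $\pi_{ij}$. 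Producing this primitive vector requires lifting the highest-weight vector of the subquotient $N/N'\cong L_G(\la_{ij})$ of $M(\la)$: the lift $w\in N_{\la_{ij}}$ is $G_{ev}$-primitive modulo $N'$, and using Lemma \ref{canon} together with $F_0\subseteq L_G(\la)\subseteq M(\la)$ one adjusts $w$ by elements of $N'$ to kill its components in the $G_{ev}$-submodule generated by strictly higher-weight vectors, leaving a nonzero multiple of $\pi_{ij}$.

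The main obstacle is the lifting step in the necessity direction: making rigorous that the $G_{ev}$-primitive highest weight vector of $L_G(\la_{ij})$, which \emph{a priori} lives only in a subquotient of $M(\la)$, can actually be represented by $\pi_{ij}$ inside $M(\la)$ itself. This rests on the tight control Lemma \ref{canon} and Corollary \ref{baza} give over the $G_{ev}$-structure at the weight $\la_{ij}$ when $\la\in C$, together with the observation (used in the proof of Theorem \ref{link1}) that the $G_{ev}$-module $M(\la)/L_G(\la)\cap F_1$ is generated precisely by those primitive vectors $\pi_{kl}$ with $\omega_{kl}\equiv 0\pmod p$.
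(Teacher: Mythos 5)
The paper states this corollary without supplying a proof, so I can only measure your argument against what the surrounding results actually give. There are two problems. The first is your identification of $M(\la)$ with the module $M_1$ from the proof of Theorem \ref{link1}. The only object the paper denotes $M(\cdot)$ is $M(\mu)=ind_P^G(L_{G_{ev}}(\mu))$, introduced in the proof of the even-linkage lemma (and this is also the notation of \cite{fm}, which the remark after the corollary cites). Your reading trivializes the statement: $M_1$ is by definition generated by \emph{all} vectors of weight $\la_{kl}$ with $\omega_{kl}\equiv 0 \pmod p$, and $\pi_{ij}$ is such a vector (it lies in $(F_1)_{\la_{ij}}\subseteq H^0_G(\la)$ once $\la_{ij}$ is dominant), so under your interpretation $\omega_{ij}\equiv 0\pmod p$ already forces $\pi_{ij}\in M(\la)$ and the corollary collapses to a restatement of Theorem \ref{link1}. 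That directly contradicts the paper's remark that $\pi_{ij}$ need not belong to $M(\la)$. Your sufficiency argument does survive a change of interpretation, since it only uses that $M(\la)$ is a $G$-subsupermodule of $H^0_G(\la)$ containing $L_G(\la)$ and $\pi_{ij}$, that the odd superderivations $_{lk}D$ with $l>k$ send $F_1$ into $F_0\subseteq L_G(\la)$, and that $\phi_1(\pi_{ij})=\omega_{ij}\pi_{ij}=0$ keeps $\pi_{ij}$ out of $L_G(\la)\cap F_1=\mathrm{im}\,\phi_1$; this is exactly the mechanism of the proof of Theorem \ref{link1}, so that direction is essentially fine.

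The serious gap is in the necessity of $\pi_{ij}\in M(\la)$. Your argument rests on the claim that a composition factor $L_G(\la_{ij})$ of $M(\la)$ forces an honest $G_{ev}$-primitive vector of weight $\la_{ij}$ inside $M(\la)$, to which Corollary \ref{baza} could then be applied. That claim is false in general: a composition factor only yields a primitive vector in a subquotient $N/N'$, and a lift $w\in N_{\la_{ij}}$ satisfies $(w)_{ab}D\in N'$ for $a>b$ without those images vanishing; adjusting $w$ by elements of $N'_{\la_{ij}}$ does not change this. Concretely, a nonsplit extension $0\to L(\tau)\to W\to L(\sigma)\to 0$ with $\tau\ntrianglelefteq\sigma$ has $L(\sigma)$ as a composition factor but contains no primitive vector of weight $\sigma$: such a vector would generate a highest weight module of highest weight $\sigma$, which either splits the sequence or equals $W$, and the latter is impossible because $W$ contains the weight $\tau\ntrianglelefteq\sigma$. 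Nothing in your sketch excludes this scenario for $M(\la)$; Lemma \ref{canon} controls the codimension of the submodule $Z_1$ generated by higher weights inside the full floor $F_1$, not inside $M(\la)\cap F_1$, and it does not prevent $L_{G_{ev}}(\la_{ij})$ from occurring only as a composition factor of $Z_1\cap M(\la)$ with no primitive vector realizing it. Closing this gap requires an argument specific to the structure of $M(\la)$ at the weight $\la_{ij}$ (for instance, exploiting that $(H^0_G(\la))_{\la_{ij}}$ lies entirely in $F_1$ and that $(F_1)_{\la_{ij}}=(Z_1)_{\la_{ij}}\oplus K\pi_{ij}$, together with control of which $G_{ev}$-composition factors $Z_1$ can contribute), not a generic lifting of highest weight vectors from subquotients.
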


Note that Lemma 3.1 of \cite{fm} shows that $\pi_{ij}$ need not belong to $M(\la)$, hence the assumption $\pi_{ij}\in M(\la)$ is necessary.

The last Theorem shows a partial case when strong linkage of $\la_{ij}$ to $\la$ is related to vanishing of elements $\omega_{ij}$ (modulo $p$). 
Interestingly, this property is invariant under even linkage due to Lemma \ref{Nakayama1}.

\subsection{Odd linkage of $\la$ and $\la_{I|J}$}

Theorem \ref{link1} motivates the definition of odd linkage. 

Let $\la$ be a dominant weight of $\GL(m|n)$ and $(I|J)=(i_1 \ldots i_r|j_1 \ldots j_r)$ be admissible. 
Define the concept of odd linkage of weight $\la$ and $\la_{I|J}$ as follows. 
We say $\la$ and $\la_{I|J}$ are odd-linked, and write $\la\sim_{odd} \la_{I|J}$, if and only if 
the following condition $C(I|J)$ is satisfied.

There is an rearrangement $(I'|J')$ (not necessarily admissible) of $(I|J)$ such that $\la_{I|J}=\la_{I'|J'}$ and
\[\begin{aligned}&\omega_{i'_1,j'_1}(\la)\equiv 0 \pmod p, \quad \omega_{i'_2,j'_2}(\la_{i'_1,j'_1})\equiv 0 \pmod p, \ldots, \\ 
&\omega_{i'_s,j'_s}(\la_{i'_1\ldots i'_{s-1}| j'_1,\ldots, j'_{s-1}})\equiv 0 \pmod p. \end{aligned}\]

We allow $(I|J)=\emptyset$, which gives $\la\sim_{odd} \la$.

Let us formulate the following consequence of Proposition \ref{Donkin}.
\begin{pr}\label{Nakayama}
Let $char(K)=p>2$, $\la$ be dominant and $(I|J)$ and $(K|L)$ be admissible.
If $\la_{I|J}$ and $\la_{K|L}$ are $G_{ev}$-linked, then condition $(C_{I|J})$ implies $(C_{K|L})$. 
\end{pr}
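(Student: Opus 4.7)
The plan is to extend Lemma \ref{Nakayama1} from single atomic moves to chains of arbitrary length, using Proposition \ref{Donkin} to relate residue data along the chain. Introduce $a_i(\mu) := \mu^+_i - i$ and $b_j(\mu) := j - \mu^-_j - m - 1$, so that $\omega_{ij}(\mu) \equiv 0 \pmod p$ is equivalent to $a_i(\mu) \equiv b_j(\mu) \pmod p$, and an atomic step $\mu \to \mu_{ij}$ decreases both $a_i$ and $b_j$ by one. The first observation is that $(C_{I|J})$ depends only on the pair $(\la, \la_{I|J})$: the rearrangement $(I'|J')$ may be any sequence of $r$ pairs whose coordinate multisets coincide with those of $I$ and $J$, and the weight $\la_{X|Y}$ depends only on these multisets.

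The strategy is induction on $r = |I|$. The base case $r = 1$ is exactly Lemma \ref{Nakayama1}. For the inductive step, assume $(C_{I|J})$ holds via a chain $\la \to \la_{i'_1 j'_1} \to \ldots \to \la_{I|J}$, and set $\mu := \la_{i'_1 j'_1}$, so that $(C_{\mu, \la_{I|J}})$ holds via a chain of length $r-1$. Given $\la_{I|J} \sim_{G_{ev}} \la_{K|L}$, Proposition \ref{Donkin} provides permutations $\sigma^+$ of $\{1,\ldots,m\}$ and $\sigma^-$ of $\{1,\ldots,n\}$ with $a_i(\la_{I|J}) \equiv a_{\sigma^+(i)}(\la_{K|L}) \pmod{p^{d^++1}}$ and analogously on the minus side. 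The goal is to locate a first atomic move $(k_1, l_1)$ for the $(K|L)$-chain, with $k_1$ appearing in the multiset $K$ and $l_1$ in $L$, such that $\omega_{k_1 l_1}(\la) \equiv 0 \pmod p$ and $\la_{k_1 l_1} \sim_{G_{ev}} \mu$. Once this is in hand, the inductive hypothesis applied to the $G_{ev}$-linked pair $\mu \sim_{G_{ev}} \la_{k_1 l_1}$ with residual chain of length $r-1$ yields $(C_{\la_{k_1 l_1}, \la_{K|L}})$; prepending the atomic move $(k_1, l_1)$ produces $(C_{K|L})$.

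The main obstacle is the construction of $(k_1, l_1)$. This requires a cycle-by-cycle analysis of $\sigma^+$ and $\sigma^-$ in the spirit of the proof of Lemma \ref{Nakayama1}: one tracks the residue $a_{i'_1}(\la) \equiv b_{j'_1}(\la) \pmod p$ through the cycle structures of $\sigma^+$ and $\sigma^-$, transiting from indices in the support of $I$ (resp.\ $J$) to indices in the support of $K$ (resp.\ $L$), to produce $k_1 \in K$ and $l_1 \in L$ with $a_{k_1}(\la) \equiv a_{i'_1}(\la) \pmod p$ and $b_{l_1}(\la) \equiv b_{j'_1}(\la) \pmod p$; one then verifies that the residual multisets $K \setminus \{k_1\}$ and $L \setminus \{l_1\}$ together with the target weight $\la_{K|L}$ satisfy the inductive hypothesis relative to the intermediate weight $\la_{k_1 l_1}$. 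A further subtlety is that $|I|$ need not equal $|K|$ (Proposition \ref{Donkin} only gives $|I| \equiv |K| \pmod{p^{d^++1}}$), so when these differ the inductive argument must be supplemented by pairs of canceling atomic moves within the $G_{ev}$-block to reconcile chain lengths.
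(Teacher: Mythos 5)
Your outline defers exactly the step that constitutes the proof. The whole content of the proposition is to convert the hypothesis that $\la_{I|J}$ and $\la_{K|L}$ are $G_{ev}$-linked into a residue-preserving matching between the index data of $(I|J)$ and of $(K|L)$, and your proposal names the construction of the first matched pair $(k_1,l_1)$ as ``the main obstacle'' and then points to an unperformed ``cycle-by-cycle analysis in the spirit of Lemma \ref{Nakayama1}.'' That analysis is the theorem, not a verification: the paper's argument extracts from Proposition \ref{Donkin} (by the cycle argument generalizing Lemma \ref{Nakayama1}) bijections $\sigma^+$ from the multiset $I$ onto the multiset $K$ with $\la^+_{i_s}-i_s\equiv\la^+_{\sigma^+(i_s)}-\sigma^+(i_s)\pmod p$ and $\sigma^-$ from $J$ onto $L$ with the analogous congruence, and then transports the witnessing order all at once, $(K'|L')=(\sigma^+(i'_1)\ldots|\sigma^-(j'_1)\ldots)$, with no induction. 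Note also that the verification that the transported chain satisfies $(C_{K|L})$ is itself not free: since $\omega_{ab}(\la_{X|Y})=\omega_{ab}(\la)-m_X(a)+m_Y(b)$, one must control the multiplicity corrections along the chain and not only the residues $\la^+_a-a$, $\la^-_b-b$; your sketch does not address this at any stage.

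Two further structural defects. First, after one step your induction compares a chain based at $\mu=\la_{i'_1j'_1}$ with a chain based at $\la_{k_1l_1}$; these are distinct (merely $G_{ev}$-linked) base weights, so the statement you invoke inductively is not the proposition being proved, and the base case is not Lemma \ref{Nakayama1} either (which assumes a common base weight). You would need to formulate and prove a strengthened two-base-weight version throughout. Second, the concern that $|I|\neq|K|$ is unfounded: $G_{ev}$-linkage forces $\la^+-\sum_s\delta^+_{i_s}$ and $\la^+-\sum_s\delta^+_{k_s}$ into the same $\GL(m)$-block, and the degree $\sum_a\la^+_a-|I|$ is constant on blocks (Donkin's theorem is a statement about blocks of the Schur algebra of a fixed degree, equivalently the central character of $\GL(m)$ is constant on a block), so $|I|=|K|$ and likewise $|J|=|L|$; no ``canceling atomic moves'' arise.
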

\begin{proof}
It is clear that $(I|J)$ and $(K|L)$ are of the same length $r$. By Proposition \ref{Donkin},
there is a bijection $\sigma^+:I\to K$ such that 
\[\la^+_{i_s}-i_s\equiv \la^+_{\sigma^+(i_s)}- \sigma^+(i_s) \pmod p\]
for each $s=1, \ldots r$ 
and there is a bijection $\sigma^-:J\to L$ such that 
\[\la^-_{j_t}-j_t\equiv \la^+_{\sigma^-(j_t)}- \sigma^-(j_t) \pmod p\]
for each $t=1, \ldots r$.
Using the definition of $\omega_{a,b}$ we verify that the rearrangement $(K'|L')$ of $(K|L)$ given by 
$k'_s=\sigma^+(i_s)$ and $l_t=\sigma^-(j_t)$ for $s,t=1, \ldots, r$ satisfies the condition
$(C_{K|L})$.
\end{proof}

This shows that odd and even linkage work together remarkably well. 

\subsection{Linkage conjecture}

\begin{pr}\label{Link}
Assume the weight $\la_{I|J}$ is dominant and $(I|J)$ is admissible. 
The weights $\la_{I|J}$ and $\la$ are linked if
there is a sequence $\alpha_1, \alpha_2, \ldots \alpha_r$ of simple odd roots of $\GL(m|n)$ (not necessarily distinct!)
and an element $w\in W_{af}$ satisfying the following properties:

$\la^{(0)}=\la$,

$\la^{(i+1)}=\la^{(i)} +\alpha_{i+1}$ and $(\la^{(i)}+\rho, \alpha_{i+1})=0$ for $i=0, \ldots r-1$, and 

$w\cdot \la^{(r)} = \la_{I|J}$.

\end{pr}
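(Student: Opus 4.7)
The strategy combines odd linkage with even (affine Weyl) linkage, then invokes transitivity of linkage via Lemma~\ref{strong}. A preliminary identification is crucial: with the natural choice $\rho=(m,m-1,\ldots,1\mid -1,-2,\ldots,-n)$, the bilinear form $(\epsilon_i,\epsilon_j)=\delta_{ij}(-1)^{|\epsilon_i|}$ gives
\[(\mu+\rho,\epsilon_{m+b}-\epsilon_a)=-(\mu^+_a+\mu^-_b+\rho_a+\rho_{m+b})=-\omega_{ab}(\mu),\]
so each hypothesis $(\la^{(i)}+\rho,\alpha_{i+1})=0$ is precisely the $\omega$-vanishing condition appearing in Theorem~\ref{link1} (read modulo $p$ in positive characteristic), for the pair of indices $(a,b)$ with $\alpha_{i+1}=\pm(\epsilon_a-\epsilon_{m+b})$. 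Thus the chain $\la=\la^{(0)},\la^{(1)},\ldots,\la^{(r)}$ is precisely an odd-linkage chain in the sense of Section~5.4, and the final condition $w\cdot\la^{(r)}=\la_{I|J}$ places $\la^{(r)}$ in the $W_{af}$-orbit of the dominant weight $\la_{I|J}$.

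The proof proceeds by induction on $r$. The base case $r=0$ reduces to $w\cdot\la=\la_{I|J}$ with $w\in W_{af}$; since $W_{af}$ is the affine Weyl group of $G_{ev}\cong \GL(m)\times\GL(n)$, Proposition~\ref{Donkin} applied to both factors of $G_{ev}$ gives $\la\sim_{ev}\la_{I|J}$, and the lemma ``even linkage implies linkage'' finishes this case. For the inductive step, peel off $\alpha_1$ and let $\mu$ be the dominant representative of $\la^{(1)}=\la+\alpha_1$ in its $W_{af}$-orbit, so $\mu=w_1\cdot\la^{(1)}$ for some $w_1\in W_{af}$. Theorem~\ref{link1} (after first pre-shifting $\la$ into the canonical alcove $C$ by an element of $W_{af}$ if necessary and transporting back via even linkage) yields $\la\sim\la+\alpha_1$ in the generalized sense, and then $\la+\alpha_1\sim_{ev}\mu$ by Proposition~\ref{Donkin}, so $\la\sim\mu$. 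The truncated chain $\mu,\la^{(2)},\ldots,\la^{(r)}$ together with $ww_1^{-1}$ in place of $w$ forms a length-$(r-1)$ chain whose hypotheses are still satisfied: the orthogonality conditions for $\alpha_2,\ldots,\alpha_r$ remain valid after the $w_1$-conjugation because of Lemma~\ref{Nakayama1} and Proposition~\ref{Nakayama}. The inductive hypothesis then gives $\mu\sim\la_{I|J}$, and transitivity delivers $\la\sim\la_{I|J}$.

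The principal obstacle is making the inductive step rigorous when intermediate weights $\la^{(k)}$ are not dominant: Theorem~\ref{link1} applies directly only to $\la\in C$ with $\la_{ab}$ dominant, so whenever $\la^{(k)}$ falls outside the dominant chamber one must conjugate by $W_{af}$ and argue that the subsequent odd steps transform correspondingly. Here Lemma~\ref{Nakayama1} and Proposition~\ref{Nakayama} are indispensable: they ensure that the $\omega$-vanishing conditions are invariants of $G_{ev}$-linkage, so shifting the chain by $W_{af}$-elements preserves all the hypotheses needed to continue the induction. A secondary technicality is the need to invoke Theorem~\ref{link1} outside the canonical alcove, which calls either for $W_{af}$-equivariance of the block structure built from Proposition~\ref{Donkin} or, alternatively, for the forthcoming characterization of irreducibility of $H^0_G(\la)$ via typical weights mentioned in the Remark following Corollary~\ref{cor}.
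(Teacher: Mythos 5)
Your argument is, in substance, exactly what the paper intends: the paper's entire proof of Proposition~\ref{Link} is the single sentence ``Reformulate previous results by replacing $\omega_{ij}$ using the language of the bilinear form $(.,.)$,'' and your identity $(\mu+\rho,\epsilon_a-\epsilon_{m+b})=\omega_{ab}(\mu)$ (the sign convention for the root is immaterial, and your shift of $\rho$ by a multiple of the Berezinian weight does not affect pairings with roots) together with the plan ``odd step via Theorem~\ref{link1}, even adjustment via Proposition~\ref{Donkin} and the lemma that even linkage implies linkage, transitivity via Lemma~\ref{strong}'' is precisely that reformulation, carried out in considerably more detail than the paper supplies.

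That said, the obstacle you flag in your closing paragraph is a genuine gap, and neither your write-up nor the paper closes it. Theorem~\ref{link1} is proved only for $\la\in C$ and only for a single step $\la\mapsto\la_{ij}$ with $\la_{ij}$ dominant; Lemma~\ref{Nakayama1} and Proposition~\ref{Nakayama} assert only that the $\omega$-vanishing conditions are invariant under even linkage --- they do not produce the composition factor (or any linkage statement) needed when the source weight of an odd step lies outside the canonical alcove, or when an intermediate $\la^{(k)}$ fails to be dominant. Your two proposed escape routes ($W_{af}$-equivariance of blocks, or the irreducibility criterion alluded to in the Remark after Corollary~\ref{cor}) are left as pointers rather than carried out. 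A secondary soft spot is the base case: Proposition~\ref{Donkin} characterizes $\GL(m)$-blocks by congruences modulo $p^{d+1}$ \emph{together with} equality of $d$-exponents, while $W_{af}$-conjugacy of $\la^{(r)}$ and $\la_{I|J}$ only yields the modulo-$p$ congruence; since the $d$-exponent is not constant on $W_{af}$-orbits, $w\cdot\la^{(r)}=\la_{I|J}$ does not by itself give $\la^{(r)}\sim_{ev}\la_{I|J}$. In short, you have faithfully reproduced (and been more candid about) the paper's sketch; a complete proof would require extending Theorem~\ref{link1} beyond the alcove $C$, which neither you nor the paper provides.
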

\begin{proof}
Reformulate previous results by replacing $\omega_{ij}$ using the language of the bilinear form $(.,.)$.
\end{proof}

Proposition \ref{Nakayama} shows that applying even linkage at any floor during the process of building $\la_{I|J}$ 
does not change the specific set of linked weights in Proposition \ref{Link}. 
Therefore we ask whether the condition in Proposition \ref{Link} is also necessary for the linkage of weights. 
We suspect that this might be the case.

\bigskip

{\it Acknowledgement.} The author would like to express his appreciation for the work of the referee and thanks for valuable comments 
that helped to significantly improve the presentation and readability of this paper.

\bigskip

\centerline{\bf References:}


\begin{thebibliography}{99}

\bibitem{br} Berele, A. and Regev, A.: {\it Hook Young diagrams with applications to combinatorics and representations of Lie superalgebras}, 
Adv. in Math. 64 (1987), 118--175.

\bibitem{bs} R.A. Brualdi, H. Schneider, {\it Determinantal
identities: Gauss, Schur, Cauchy, Sylvester, Kronecker, Jacobi, Binet,
Laplace, Muir, and Cayley}, Lin. Algebra and its Applications, 52/53
(1983) 769--791.

\bibitem{bk} J. Brundan, J. Kujawa, {\it A new proof of the Mullineaux 
conjecture}, J. Alg. Combin. 18 2003 13--39.

\bibitem{cw} S.-J. Cheng, W. Wang, {\it Dualities and representations of Lie superalgebras}, Graduate Studies in Mathematics {\bf 144},
American Math. Soc., Providence, 2012.

\bibitem{don} S. Donkin, {\it On Schur algebras and related algebras IV: the blocks of Schur algebras}, 
J. Algebra 168 (1994) 400--429.

\bibitem{f} W.L. Ferrar, {\it Algebra: A textbook of determinants,
matrices and algebraic forms}, Claredon Press, Oxford, 1941.

\bibitem{g} F.R. Gantmacher, {\it The theory of matrices}, Vol. 1, Chelsea
Publishing, New York 1959.

\bibitem{gm1} A.N. Grishkov, F. Marko, {\it Description of simple
modules for Schur superalgebra $S(2|2)$}, Glasgow Mathematical Journal {\bf 55} (2013), no. 3,  695--719.

\bibitem{jan} Jantzen, J.C.: {\it Representations of Algebraic Groups}, Mathematical Surveys and Monographs, Volume {\bf 107}, 
American Math. Soc. 2003.

\bibitem{kac}  V. Kac, {\it Lie superalgebras}, Adv. Math. {\bf 26} (1977), 8--96.

\bibitem{kac3} V. Kac, {\it Characters of typical representations of classical Lie superalgebras}, 
Comm. Algebra 5 (1977) 889--897.

\bibitem{zs} R. La Scala, A.N. Zubkov, {\it Costandard modules over
Schur superalgebras in characteristic $p$}, J. Algebra and its Appl.,
7 (2) (2008) 147--166.

\bibitem{fm} F. Marko, {\it Description of simple modules for Schur
superalgebra $S(3|1)$}, Communications in Algebra {\bf 41} (2013), no.7, 2665--2697.

\bibitem{fm2} F. Marko, {\it Irreducibility of induced modules for general linear supergroups},
arXiv:1309.0284 [math.RT].

\bibitem{fm3} F. Marko, {\it The role of bideterminants in the representation theory}, Sao Paulo 
J. Math. Sciences 6 (1) (2012) 1--16.

\bibitem{mz} F. Marko, A.N. Zubkov, {\it Schur superalgebras in characteristic $p$, II}, Bull. London Math. Soc. {\bf 38} (2006), no.1, 99--112.

\bibitem{m} S. Martin, {\it Schur algebra and representation theory},
Cambridge Tracts in Math 112, Cambridge Univ. Press, 1993.

\bibitem{p} V.V. Prasolov, {\it Problems and theorems in linear algebra},
Translations of Mathematical Monographs, Volume 134, American Math.
Soc., Providence, 1994.

\bibitem{z} A.N. Zubkov, {\it Some properties of general linear
supergroups and of Schur superalgebras}, Algebra Logic 45 (3) (2006)
147--171.
\end{thebibliography}
\end{document}